\let\origsection=\section \def\section{\@ifstar{\origsection*}{\mysection}}
\def\mysection{\@startsection{section}{1}\z@{.7\linespacing\@plus\linespacing}{.5\linespacing}{\normalfont\scshape\centering\S}}
\renewcommand{\PrintDOI}[1]{\doi{#1}}
\numberwithin{equation}{section}
\numberwithin{figure}{section}
\def\rmlabel{\upshape({\itshape \roman*\,})}
\def\alabel{\upshape({\itshape \alph*\,})}
\def\nlabel{\upshape({\itshape \arabic*\,})}
\let\polishlcross=\l
\def\l{\ifmmode\ell\else\polishlcross\fi}
\def\qand{\quad\text{and}\quad}
\def\qqand{\qquad\text{and}\qquad}
\let\emptyset=\varnothing
\let\setminus=\smallsetminus
\let\sm=\setminus
\def\moverlay{\mathpalette\mov@rlay}
\def\mov@rlay#1#2{\leavevmode\vtop{   \baselineskip\z@skip \lineskiplimit-\maxdimen
   \ialign{\hfil$\m@th#1##$\hfil\cr#2\crcr}}}
\newcommand{\charfusion}[3][\mathord]{
    #1{\ifx#1\mathop\vphantom{#2}\fi
        \mathpalette\mov@rlay{#2\cr#3}
      }
    \ifx#1\mathop\expandafter\displaylimits\fi}
\newcommand{\dcup}{\charfusion[\mathbin]{\cup}{\cdot}}
\newcommand{\bigdcup}{\charfusion[\mathop]{\bigcup}{\cdot}}
\DeclareFontFamily{U}  {MnSymbolC}{}
\DeclareSymbolFont{MnSyC}         {U}  {MnSymbolC}{m}{n}
\DeclareFontShape{U}{MnSymbolC}{m}{n}{
    <-6>  MnSymbolC5
   <6-7>  MnSymbolC6
   <7-8>  MnSymbolC7
   <8-9>  MnSymbolC8
   <9-10> MnSymbolC9
  <10-12> MnSymbolC10
  <12->   MnSymbolC12}{}
\DeclareMathSymbol{\powerset}{\mathord}{MnSyC}{180}
\newcommand{\qedge}[7]{

	\ifx\relax#4\relax
		\def\qoffs{0pt}
	\else
		\def\qoffs{#4}
	\fi

	\def\qhedge{
		($#1+#3!\qoffs!-90:#2-#3$) --
		($#2+#1!\qoffs!-90:#3-#1$) --
		($#3+#2!\qoffs!-90:#1-#2$) -- cycle}

	\coordinate (12) at ($#1!\qoffs!90:#2$);
	\coordinate (13) at ($#1!\qoffs!-90:#3$);
	\coordinate (23) at ($#2!\qoffs!90:#3$);
	\coordinate (21) at ($#2!\qoffs!-90:#1$);
	\coordinate (31) at ($#3!\qoffs!90:#1$);
	\coordinate (32) at ($#3!\qoffs!-90:#2$);
	
	\def\nqhedge{
		(13) let \p1=($(13)-#1$), \p2=($(12)-#1$) in
			arc[start angle={atan2(\y1,\x1)}, delta angle={atan2(\y2,\x2)-atan2(\y1,\x1)-360*(atan2(\y2,\x2)-atan2(\y1,\x1)>0)}, x radius=\qoffs, y radius=\qoffs] --
		(21) let \p1=($(21)-#2$), \p2=($(23)-#2$) in
			arc[start angle={atan2(\y1,\x1)}, delta angle={atan2(\y2,\x2)-atan2(\y1,\x1)-360*(atan2(\y2,\x2)-atan2(\y1,\x1)>0)}, x radius=\qoffs, y radius=\qoffs] --
		(32) let \p1=($(32)-#3$), \p2=($(31)-#3$) in
			arc[start angle={atan2(\y1,\x1)}, delta angle={atan2(\y2,\x2)-atan2(\y1,\x1)-360*(atan2(\y2,\x2)-atan2(\y1,\x1)>0)}, x radius=\qoffs, y radius=\qoffs] --
		cycle}

		\ifx\relax#5\relax
		\def\qlwidth{1pt}
	\else
		\def\qlwidth{#5}
	\fi
	
		\ifx\relax#7\relax
		\fill \nqhedge;
	\else
		\fill[#7]\nqhedge;
	\fi

		\ifx\relax#6\relax
		\draw[line width=\qlwidth,rounded corners=\qoffs]\nqhedge;
	\else
		\draw[line width=\qlwidth,#6]\nqhedge;
	\fi
}
\let\epsilon=\varepsilon
\let\eps=\epsilon
\let\rho=\varrho
\let\theta=\vartheta
\let\phi=\varphi
\def\NN{{\mathds N}}
\def\RR{{\mathds R}}
\newcommand{\cA}{\mathcal{A}}
\newcommand{\cK}{\mathcal{K}}
\newcommand{\cP}{\mathcal{P}}
\newcommand{\ccA}{\mathscr{A}}
\newcommand{\ccP}{\mathscr{P}}
\theoremstyle{plain}
\newtheorem{thm}{Theorem}[section]
\newtheorem{fact}[thm]{Fact}
\newtheorem{prop}[thm]{Proposition}
\newtheorem{cor}[thm]{Corollary}
\newtheorem{lemma}[thm]{Lemma}
\theoremstyle{definition}
\newtheorem{dfn}[thm]{Definition}
\newtheorem{question}[thm]{Question}
\newtheorem{problem}[thm]{Problem}
\newcommand{\seq}[1]{\accentset{\rightharpoonup}{#1}}
\def\colond{\colon\,}
\let\st=\colond
\def\ex{\mathrm{ex}}
\def\Hom{\mathrm{Hom}}
\def\hom{\mathrm{hom}}
\def\rmd{\mathrm{d}}
\newsavebox\vdegbox
\savebox\vdegbox{\tikz{
		\draw[black,fill=black] (90:1) circle (.35);
		\draw[black,line width=0.10cm] (210:1) circle (.30);
		\draw[black,line width=0.10cm] (330:1) circle (.30);
		\draw[opacity=0] (0:1.2) circle (0.1);
	}}
\newsavebox\vvbox
\savebox\vvbox{\tikz{
		\draw[black,line width=0.10cm] (90:1) circle (.30);
		\draw[black,fill=black] (210:1) circle (.35);
		\draw[black,fill=black] (330:1) circle (.35);
		\draw[opacity=0] (0:1.2) circle (0.1);
	}}
\newsavebox\pdegbox
\savebox\pdegbox{\tikz{
		\draw[black,line width=0.10cm] (90:1) circle (.30);
		\draw[black,fill=black] (210:1) circle (.35);
		\draw[black,fill=black] (330:1) circle (.35);
		\draw[black,line width=0.28cm ] (210:1) -- (330:1);
		\draw[opacity=0] (0:1.2) circle (0.1);
	}}
\newsavebox\vvvbox
\savebox\vvvbox{\tikz{
		\draw[black,fill=black] (90:1) circle (.35);
		\draw[black,fill=black] (210:1) circle (.35);
		\draw[black,fill=black] (330:1) circle (.35);
		\draw[opacity=0] (0:1.2) circle (0.1);
	}}
\newcommand{\vvv}{\mathord{\scaleobj{1.2}{\scalerel*{\usebox{\vvvbox}}{x}}}}
\newcommand{\pivvv}{\pi_{\vvv}}
\newsavebox\evbox
\savebox\evbox{\tikz{
		\draw[black,fill=black] (90:1) circle (.35);
		\draw[black,fill=black] (210:1) circle (.35);
		\draw[black,fill=black] (330:1) circle (.35);
		\draw[black,line width=0.28cm ] (210:1) -- (330:1);
		\draw[opacity=0] (0:1.2) circle (0.1);
	}}
\newcommand{\ev}{\mathord{\scaleobj{1.2}{\scalerel*{\usebox{\evbox}}{x}}}}
\newsavebox\eebox
\savebox\eebox{\tikz{
		\draw[black,fill=black] (90:1) circle (.35);
		\draw[black,fill=black] (210:1) circle (.35);
		\draw[black,fill=black] (330:1) circle (.35);
		\draw[black,line width=0.28cm ] (90:1) -- (330:1);
		\draw[black,line width=0.28cm ] (90:1) -- (210:1);
		\draw[opacity=0] (0:1.2) circle (0.1);
	}}
\newcommand{\ee}{\mathord{\scaleobj{1.2}{\scalerel*{\usebox{\eebox}}{x}}}}
\newcommand{\piee}{\pi_{\ee}}
\newsavebox\eeebox
\savebox\eeebox{\tikz{
		\draw[black,fill=black] (90:1) circle (.35);
		\draw[black,fill=black] (210:1) circle (.35);
		\draw[black,fill=black] (330:1) circle (.35);
		\draw[black,line width=0.28cm ] (90:1) -- (330:1);
		\draw[black,line width=0.28cm ] (90:1) -- (210:1);
		\draw[black,line width=0.28cm ] (210:1) -- (330:1);
		\draw[opacity=0] (0:1.2) circle (0.1);
	}}
\def\red{{\rm red}}
\def\blue{{\rm blue}}
\def\green{{\rm green}}
\begin{document}

\title[Hypergraphs with vanishing Tur\'an density]
{Hypergraphs with vanishing Tur\'an density in uniformly dense hypergraphs}

\author[Chr.~Reiher]{Christian Reiher}
\address{Fachbereich Mathematik, Universit\"at Hamburg, Hamburg, Germany}
\email{Christian.Reiher@uni-hamburg.de}
\email{schacht@math.uni-hamburg.de}

\author[V.~R\"{o}dl]{Vojt\v{e}ch R\"{o}dl}
\address{Department of Mathematics and Computer Science,
Emory University, Atlanta, USA}
\email{rodl@mathcs.emory.edu}
\thanks{The second author is supported by NSF grant DMS 1301698.}

\author[M.~Schacht]{Mathias Schacht}

\subjclass[2010]{05C35 (primary), 05C65, 05C80 (secondary)}
\keywords{quasirandom hypergraphs, extremal graph theory, Tur\'an's problem}

\begin{abstract}
P.~Erd\H{o}s [\emph{On extremal problems of graphs and generalized graphs},
Israel Journal of Mathematics~{\bf 2} (1964), 183--190] characterised those
hypergraphs $F$ that have to appear in any sufficiently large hypergraph $H$ of positive
density. We study related questions for $3$-uniform hypergraphs with the additional 
assumption that $H$ has to be {\it uniformly dense} with respect to vertex sets.
In particular, we characterise those hypergraphs $F$ that are guaranteed to
appear in large uniformly dense hypergraphs~$H$ of positive density. We also 
review the case when the density of the induced subhypergraphs of $H$
may depend on the proportion of the considered vertex sets.
\end{abstract}

\maketitle

\section{Introduction}
\label{sec:intro}
Unless said otherwise, all hypergraphs considered here are $3$-uniform. For such a 
hypergraph $H=(V,E)$ the set of vertices is 
denoted by $V=V(H)$ and we refer to the set of hyperedges by $E=E(H)$.  
Moreover, we denote by $\partial H\subseteq V^{(2)}$ the subset of all two~element 
subsets of~$V$, that contains all pairs covered by some hyperedge $e\in E$. For a
hyperedge $\{x,y,z\}\in E$ we sometimes simply write $xyz\in E$.

A classical extremal problem introduced by Tur\'an~\cite{Tu41} asks to study for
a given hypergraph $F$ its extremal function $\ex(n, F)$ sending
each positive integer to the maximum number of edges that a hypergraph of order $n$
can have without containing $F$ as a subhypergraph. In particular, one often focuses  
on the {\it Tur\'an density} $\pi(F)$ of~$F$ defined by
\[
	\pi(F)=\lim_{n\to\infty}\frac{\ex(n, F)}{\binom{n}{3}}\,.
\]
The problem to determine the Tur\'an densities of all hypergraphs is known to be very 
hard and so far it has been solved for a few hypergraphs only. A general result in this 
area due to Erd\H{o}s~\cite{Er64} asserts that a hypergraph $F$ satisfies $\pi(F)=0$
if and only if it is tripartite in the sense that there is a partition $V(F)=X\dcup Y\dcup Z$
such that every edge of $F$ contains precisely one vertex from each of $X$, $Y$, and $Z$.
  
Following a suggestion by Erd\H{o}s and S\'os~\cite{ErSo82} we studied variants
of Tur\'an's problem for {\it uniformly dense} hypergraphs~\cites{RRS-a, RRS-b, RRS-d, RRS-e}.  
Instead of finding the desired hypergraph~$F$ in an arbitrary ``host'' hypergraph $H$ of 
sufficiently large density one assumes in these problems that there are no ``sparse spots''
in the edge distribution of $H$.
There are various ways
to make this precise and we refer to~\cite{RRS-b}*{Section~4} and~\cite{RRS-e}*{Section~2}
for a more detailed discussion. Here we consider two closely related concepts, where the hereditary density condition pertains to large sets of vertices (see Sections~\ref{sec:udpd} 
and~\ref{sec:udvd} below).

\subsection{Uniformly dense hypergraphs with positive density}
\label{sec:udpd}
The first concept we discuss here continues our work from~\cites{RRS-a, RRS-b, RRS-d, RRS-e}.
Roughly speaking, this notion guarantees density~$d$ for all hypergraphs induced on 
sufficiently large vertex sets of linear size.
\begin{dfn} \label{dfn:vtxdense}
For real numbers $d\in[0, 1]$ and $\eta>0$ we say that a hypergraph 
$H=(V, E)$ is {\it $(d, \eta, 1)$-dense} if for all $U\subseteq V$ the 
estimate 
\[
	\big|U^{(3)}\cap E\big|\ge d\binom{|U|}{3}-\eta\,|V|^3
\]
holds, where  $U^{(3)}$ denotes the set of all three element subsets of $U$.
\end{dfn} 
The Tur\'an densities associated with this concept are defined by 
\begin{multline*}
	\pi_1(F)=\sup\bigl\{d\in[0,1]\colon \text{for every $\eta>0$ and $n\in \NN$ there exists}\\
	\text{an $F$-free,  $(d,\eta, 1)$-dense hypergraph $H$ with $|V(H)|\geq n$}\bigr\}\,.
\end{multline*}

Our main result characterises all hypergraphs $F$ with $\pi_1(F)=0$.
\begin{thm}\label{zero}
For a $3$-uniform 
hypergraph $F$, the following are equivalent:
\begin{enumerate}[label=\alabel]
	\item \label{zero:a}$\pi_1(F)=0$.
	\item \label{zero:b} There is an enumeration of the vertex set $V(F)=\{v_1, \dots, v_f\}$ 
		and there is a three-colouring $\phi\colon \partial F\to\{\red,\blue,\green\}$ of the pairs of vertices covered by hyperedges of~$F$ 
		such that every hyperedge $\{v_i,v_j,v_k\}\in E(F)$ with $i<j<k$ satisfies 
		\[
			\phi(v_i,v_j)=\red,\quad \phi(v_i,v_k)=\blue, \qand \phi(v_j,v_k)=\green.
		\]
\end{enumerate}
\end{thm}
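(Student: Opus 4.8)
\emph{Strategy.} The plan is to prove the two implications separately. For \ref{zero:a}$\Rightarrow$\ref{zero:b} I would argue contrapositively, exhibiting for every $F$ that violates \ref{zero:b} an explicit family of $F$-free, uniformly dense hypergraphs witnessing $\pi_1(F)>0$. The reverse implication \ref{zero:b}$\Rightarrow$\ref{zero:a} is an embedding statement — every sufficiently large $(d,\eta,1)$-dense hypergraph contains $F$ — and is where the substantial work lies.

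\emph{The construction.} Fix a large integer $N$, linearly order $[N]$, and choose a colouring $\psi\colon[N]^{(2)}\to\{\red,\blue,\green\}$ uniformly at random. Let $H=H(\psi)$ have vertex set $[N]$ and edge set consisting of all triples $\{x,y,z\}$ with $x<y<z$ for which $\psi(xy)=\red$, $\psi(xz)=\blue$ and $\psi(yz)=\green$. Each triple is an edge with probability $\tfrac1{27}$, and re-colouring a single pair changes $\big|U^{(3)}\cap E(H)\big|$ by at most $N$ for any $U\subseteq[N]$; hence standard concentration (bounded differences) and a union bound over all $U$ show that with high probability $\big|U^{(3)}\cap E(H)\big|\ge\tfrac1{27}\binom{|U|}{3}-\eta\,N^3$ for every $U\subseteq[N]$, so that $H$ is $(\tfrac1{27},\eta,1)$-dense. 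On the other hand, if $F$ embeds into $H$ by an injection $\iota$, list $\iota(V(F))$ as $x_1<\dots<x_f$, re-enumerate $V(F)=\{w_1,\dots,w_f\}$ so that $\iota(w_\ell)=x_\ell$, and define $\phi(w_aw_b):=\psi(x_ax_b)$ for each $\{w_a,w_b\}\in\partial F$ with $a<b$. Any edge $\{w_a,w_b,w_c\}\in E(F)$ with $a<b<c$ has image $\{x_a,x_b,x_c\}\in E(H)$, which by construction forces $\psi(x_ax_b)=\red$, $\psi(x_ax_c)=\blue$, $\psi(x_bx_c)=\green$; thus the enumeration $w_1,\dots,w_f$ together with the colouring $\phi$ witness~\ref{zero:b} for $F$. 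Consequently $H(\psi)$ is $F$-free whenever $F$ fails~\ref{zero:b}, and letting $\eta\to0$ and $N\to\infty$ we obtain $\pi_1(F)\ge\tfrac1{27}>0$.

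\emph{The embedding.} Fix $d>0$; we must produce $\eta>0$ and $n_0$ so that every $(d,\eta,1)$-dense $H$ with $|V(H)|\ge n_0$ contains the given $F$ (with its enumeration $v_1<\dots<v_f$ and colouring $\phi$). I would apply the regularity method for $3$-uniform hypergraphs to $H$ and pass to a \emph{reduced hypergraph}: a structure on a bounded index set carrying a bipartite ``polyad'' graph for each pair of indices and a tripartite sub-$3$-graph on the triangles of the relevant polyads for each triple of indices. The first task is to check that $(d,\eta,1)$-density of $H$ transfers to the natural hereditary ``density at least $d-o(1)$ everywhere'' condition on the reduced hypergraph — the reduced analogue of Definition~\ref{dfn:vtxdense}. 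The heart of the matter is then an embedding lemma: every $F$ satisfying~\ref{zero:b} embeds into every such dense reduced hypergraph. This I would prove by an induction on $f$ that embeds $v_1,\dots,v_f$ one at a time, maintaining for each not-yet-placed vertex $v_m$ a candidate set of fixed positive proportion inside one vertex class of the reduced hypergraph; upon placing $v_\ell$ one intersects the candidate set of each later $v_m$ with the appropriate polyad neighbourhood of the chosen vertex. The colouring $\phi$ is precisely the bookkeeping device here: the value $\phi(v_av_b)$ is a single colour, and it dictates the same role for the pair $\{v_a,v_b\}$ in every edge of $F$ through it, hence the same polyad and the same ``side'' against which to refine — this is exactly the consistency asserted by~\ref{zero:b}, which is what keeps every candidate set of linear size throughout the induction, while the everywhere-density of the reduced hypergraph furnishes, at each step, a legal choice of vertex with all remaining candidate sets still of positive proportion.

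\emph{Where the difficulty lies.} The construction and the passage to the reduced hypergraph are essentially routine; the crux is the embedding lemma. The delicate points are identifying the right invariant on the candidate sets so that the induction closes, and verifying that the three-colouring of~\ref{zero:b} is exactly the structure needed for all successive refinements to remain compatible — a hypergraph violating~\ref{zero:b} is precisely one for which this bookkeeping cannot be carried out. A lesser but genuine technical issue is arranging the regularity reduction so that the uniform-density hypothesis descends to the reduced hypergraph without loss of strength.
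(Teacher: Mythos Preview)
Your treatment of \ref{zero:a}$\Rightarrow$\ref{zero:b} via the random colouring $\psi$ is correct and is exactly the paper's argument (Fact~\ref{fact:jump}). For \ref{zero:b}$\Rightarrow$\ref{zero:a} your high-level plan---hypergraph regularity, passage to a $\mu$-dense reduced hypergraph, an embedding lemma there---also coincides with the paper's. The gap is in the embedding step. You correctly describe the reduced hypergraph: vertex classes $\cP^{ij}$ indexed by \emph{pairs} of indices and tripartite constituents $\cA^{ijk}$ on $\cP^{ij}\times\cP^{ik}\times\cP^{jk}$. An embedding of $F$ must therefore assign to each vertex $v_r$ an \emph{index} $\lambda(r)$ and to each pair $\{v_r,v_s\}\in\partial F$ a \emph{vertex} in $\cP^{\lambda(r)\lambda(s)}$, so that each edge of $F$ lands in the appropriate constituent. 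Your sketch, however, proposes to maintain ``for each not-yet-placed vertex $v_m$ a candidate set \ldots\ inside one vertex class of the reduced hypergraph''; but vertices of $F$ do not live in vertex classes---pairs do---and it is not clear what you are tracking. This reads as the graph-regularity greedy template transplanted to a two-level structure where it does not directly apply.

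The substantive difficulty your outline does not address is that when you commit to a vertex $P\in\cP^{\lambda(r)\lambda(s)}$ for the pair $v_rv_s$, it must later participate in an edge of $\cA^{\lambda(r)\lambda(s)\lambda(t)}$ for every edge $\{v_r,v_s,v_t\}\in E(F)$---and at the moment of commitment the index $\lambda(t)$, let alone the partner vertices in $\cP^{\lambda(r)\lambda(t)}$ and $\cP^{\lambda(s)\lambda(t)}$, is entirely unknown. The paper (Lemma~\ref{char-modif} and Section~\ref{sec:proof}) handles this not by a single greedy sweep but by three separate passes, one per colour, each drastically shrinking the index set via a Ramsey-type selection: first choose all \emph{red} vertices $P^{rs}_{\red}$ so that each has degree at least $\tfrac{\mu}{2}|\cP^{rt}||\cP^{st}|$ in every future constituent (Corollary~\ref{choose-a}); then all \emph{blue} vertices so that each red--blue pair has large codegree (Corollary~\ref{choose-b}, itself requiring an auxiliary selection principle, Corollary~\ref{2-indices}); finally all \emph{green} vertices (Corollary~\ref{choose-c}). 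The colour $\phi(v_rv_s)$ determines in which pass the pair $v_rv_s$ is handled, and the repeated index shrinking is precisely what lets each committed vertex ``anticipate'' all future constituents. A one-vertex-at-a-time scheme would face all three colour roles simultaneously at each step and would need a compound invariant on both the remaining indices and on partially committed elements of several vertex classes at once; nothing in your sketch indicates how to maintain such an invariant, and ``intersecting with the appropriate polyad neighbourhood'' does not make sense for a pair whose other endpoint has no index yet.
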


It is easy to see that tripartite hypergraphs~$F$ satisfy condition~\ref{zero:b}.
Moreover, it follows from the work in~\cite{KNRS} that 
every linear hypergraph~$F$ satisfies $\pi_1(F)=0$. Linear hypergraphs have the property that every element of 
$\partial F$ is contained in precisely one hyperedge of $F$. Consequently, 
we may consider an arbitrary vertex enumeration of~$F$ and then a colouring of $\partial F$
satisfying condition~\ref{zero:b} is forced. 
However, there are hypergraphs 
displaying condition~\ref{zero:b}, that are neither tripartite nor linear. For example, 
one can check that the hypergraph obtained from the tight cycle on five vertices by removing 
one hyperedge is such a hypergraph~$F$ (see~Figure~\ref{fig:C5minus}).

\begin{figure}[ht]\label{fig:C5minus}
\centering
\begin{tikzpicture}[scale=0.8]
	
	\coordinate (r1) at (0, 4);
	\coordinate (r2) at (1.17, 1.17);
	\coordinate (r3) at (4, 0);
	\coordinate (r4) at (6.83, 1.17);
	\coordinate (r5) at (8, 4);		
			
	\draw[red, line width=2pt] (r2) -- (r1) -- (r4) -- (r3);
	\draw[blue, line width=2pt] (r1) -- (r3) -- (r5) -- (r1);
	\draw[green, line width=2pt] (r3) -- (r2) -- (r5) -- (r4);
	
	\foreach \i in {r1, r2, r3, r4, r5}
			\fill  (\i) circle (2pt);
			
	\qedge{(r1)}{(r3)}{(r2)}{4.5pt}{1.5pt}{red!70!black}{red!70!black,opacity=0.2};
	\qedge{(r3)}{(r5)}{(r4)}{4.5pt}{1.5pt}{red!70!black}{red!70!black,opacity=0.2};
	\qedge{(r4)}{(r1)}{(r5)}{4.5pt}{1.5pt}{red!70!black}{red!70!black,opacity=0.2};	
	\qedge{(r5)}{(r2)}{(r1)}{4.5pt}{1.5pt}{red!70!black}{red!70!black,opacity=0.2};		
	
	\node at (-0.7,4) {$x$};
	\node at (0.7,0.7) {$w$};
	\node at (4,-0.7) {$v$};
	\node at (7.3,0.7) {$z$};
	\node at (8.7,4) {$y$};		
\end{tikzpicture}
\caption{Colouring of $\partial C^{(3)-}_5$ showing that $\pi_{1}\bigl(C^{(3)-}_5\bigr)=0$.
The ordering demanded by Theorem~\ref{zero}~\ref{zero:b} is from left to right, i.e.,
$x<w<v<z<y$, whereas on the cycle the vertices are ordered alphabetically with edges 
$vwx, wxy, xyz, yzv$.}
\label{fig:1}
\end{figure}
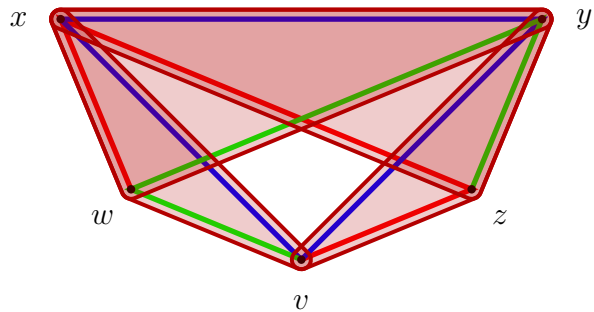

The easier implication of Theorem~\ref{zero} is ``\ref{zero:a} $\Longrightarrow$~\ref{zero:b}.''
For its proof we exhibit a ``universal'' hypergraph $H$ all of whose subhypergraphs obey 
condition~\ref{zero:b} and all of whose linear sized induced subhypergraphs have density 
$\tfrac 1{27}-o(1)$. In other words, our argument establishing this implication does
actually yield the following strengthening.

\begin{fact}\label{fact:jump}
If a hypergraph $F$ does not have property~\ref{zero:b} from Theorem~\ref{zero}, then 
$\pi_1(F)\ge\tfrac 1{27}$.
\end{fact}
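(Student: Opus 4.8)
\emph{Proof proposal.}
The plan is to turn the ``universal'' hypergraph $H$ announced above into a random object. Fix $n\in\NN$ and colour every pair $e\in[n]^{(2)}$ independently and uniformly at random with a colour $\psi(e)\in\{\red,\blue,\green\}$. Let $H=H_\psi$ be the hypergraph on vertex set $[n]$ whose edge set consists of exactly those triples $\{i,j,k\}$ with $i<j<k$ for which $\psi(i,j)=\red$, $\psi(i,k)=\blue$ and $\psi(j,k)=\green$. For any subhypergraph $F'\subseteq H_\psi$, restrict the natural order of $[n]$ to $V(F')$ and let $\phi$ be the restriction of $\psi$ to $\partial F'$; then, by the very definition of $H_\psi$, every edge $\{v_i,v_j,v_k\}$ of $F'$ with $i<j<k$ in the inherited order satisfies $\phi(v_i,v_j)=\red$, $\phi(v_i,v_k)=\blue$ and $\phi(v_j,v_k)=\green$, so $F'$ has property~\ref{zero:b}. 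Since property~\ref{zero:b} is invariant under isomorphism, it follows that $F$ is not isomorphic to any subhypergraph of $H_\psi$ --- whatever the outcome of the random colouring --- provided $F$ fails property~\ref{zero:b}. Hence it remains to show that, for every $\eta>0$, once $n$ is large enough the hypergraph $H_\psi$ is $(\tfrac1{27},\eta,1)$-dense with positive probability; as $n$ may be taken arbitrarily large, this places $d=\tfrac1{27}$ in the set whose supremum defines $\pi_1(F)$, whence $\pi_1(F)\ge\tfrac1{27}$.

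To verify the density, fix $U\subseteq[n]$ and set $X_U=\big|U^{(3)}\cap E(H_\psi)\big|$. The three pairs contained in a triple are distinct and hence coloured independently, so each triple $\{i,j,k\}\subseteq U$ with $i<j<k$ lies in $E(H_\psi)$ with probability $\tfrac13\cdot\tfrac13\cdot\tfrac13$, and therefore $\EE X_U=\tfrac1{27}\binom{|U|}{3}$. Regard $X_U$ as a function of the independent random variables $\big(\psi(e)\big)_{e\in[n]^{(2)}}$. Recolouring a single pair $e$ can change the edge status only of the triples of $U$ that contain $e$, of which there are at most $|U|-2\le n$; thus $X_U$ obeys the bounded differences condition with Lipschitz constant at most $n$ in each of its at most $\binom n2$ coordinates. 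The bounded differences inequality then supplies an absolute constant $c>0$ with
\[
	\PP\Bigl[X_U<\tfrac1{27}\tbinom{|U|}{3}-\eta n^3\Bigr]\le\exp\Bigl(-\frac{c\,\eta^2 n^6}{\binom n2\,n^2}\Bigr)\le\exp\bigl(-c\,\eta^2 n^2\bigr)\,.
\]
A union bound over the $2^n$ sets $U\subseteq[n]$, together with $2^n\exp(-c\,\eta^2 n^2)\to0$ as $n\to\infty$, shows that for all sufficiently large $n$ the event that $\big|U^{(3)}\cap E(H_\psi)\big|\ge\tfrac1{27}\binom{|U|}{3}-\eta|V(H_\psi)|^3$ holds simultaneously for every $U\subseteq V(H_\psi)$ has positive probability. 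Any $H_\psi$ in this event is $(\tfrac1{27},\eta,1)$-dense, which is exactly what we need, so the argument closes.

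I expect the only delicate point to be this last estimate, namely that the slack term $\eta|V|^3$ appearing in Definition~\ref{dfn:vtxdense} is simultaneously small enough to leave the guaranteed density equal to $\tfrac1{27}$ yet large enough to swallow both the fluctuations of $X_U$ and the union bound over all $2^n$ vertex subsets. This balance works because the Lipschitz constants of $X_U$ are only of order $n$ whereas the target deviation is of order $n^3$, so the failure probability for a fixed $U$ decays like $\exp(-\Omega(\eta^2 n^2))$, which handily dominates $2^n$. Everything else --- the coherence of the inherited colouring $\phi$ on $\partial F'$, the fact that subhypergraphs of $H_\psi$ inherit property~\ref{zero:b}, and the identity $\EE X_U=\tfrac1{27}\binom{|U|}{3}$ --- is immediate from the construction.
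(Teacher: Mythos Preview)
Your proof is correct and follows exactly the same approach as the paper's: the random three-colouring construction of $H_\psi$, the deterministic observation that every subhypergraph of $H_\psi$ inherits property~\ref{zero:b}, and the probabilistic verification of $(\tfrac1{27},\eta,1)$-density are all the same. The paper simply says ``standard probabilistic arguments show'' for the density part, whereas you spell out the bounded differences calculation and union bound explicitly.
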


\begin{proof}
Given a positive integer $n$ consider a three-colouring 
$\phi\colon [n]^{(2)}\to\{\red,\blue,\green\}$ 
of the pairs of the first $n$ positive integers. We define a hypergraph $H_{\phi}$
with vertex set $[n]$ by regarding a triple $\{i,j,k\}$ with $1\leq i<j<k\leq n$ 
as being a hyperedge if and only if $\phi(i,j)=\red$, $\phi(i,k)=\blue$, 
and $\phi(j,k)=\green$. Standard probabilistic arguments show that when
$\phi$ is chosen uniformly at random, then for any fixed $\eta>0$ the probability 
that~$H_{\phi}$ is $(1/27,\eta, 1)$-dense tends to $1$ as $n$ tends to infinity.
On the other hand, as~$F$ does not satisfy condition~\ref{zero:b} from Theorem~\ref{zero}, 
it is in a deterministic sense the case that $F$ is never a subgraph of $H_\phi$ no matter
how large $n$ becomes. Thus we have indeed~$\pi_{1}(F)\ge\tfrac 1{27}$.
\end{proof}

The combination of Theorem~\ref{zero} and Fact~\ref{fact:jump} leads
immediately to the following consequence, which shows that $\pi_1$ ``jumps'' 
from~$0$ to at least $\tfrac 1{27}$. 

\begin{cor}\label{jump}
If a hypergraph $F$ satisfies $\pi_1{(F)}>0$, then $\pi_1{(F)}\ge\tfrac1{27}$.
\end{cor}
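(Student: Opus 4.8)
The plan is to deduce Corollary~\ref{jump} immediately from Theorem~\ref{zero} together with Fact~\ref{fact:jump}, so essentially nothing is needed beyond combining the two. Suppose $F$ is a $3$-uniform hypergraph with $\pi_1(F)>0$; I want to conclude $\pi_1(F)\ge\tfrac1{27}$. First I would observe that $\pi_1(F)>0$ in particular means $\pi_1(F)\neq 0$, so statement~\ref{zero:a} of Theorem~\ref{zero} fails for $F$. By the equivalence ``\ref{zero:a}~$\Longleftrightarrow$~\ref{zero:b}'' this forces statement~\ref{zero:b} to fail as well, i.e.\ $F$ admits no enumeration of its vertices together with a three-colouring of $\partial F$ of the kind described there. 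Then Fact~\ref{fact:jump} applies verbatim: a hypergraph lacking property~\ref{zero:b} satisfies $\pi_1(F)\ge\tfrac1{27}$, which is exactly the claim.

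I do not expect any genuine obstacle here, since both ingredients are already available. The only real mathematical input is Theorem~\ref{zero}, and within it the harder implication ``\ref{zero:b}~$\Longrightarrow$~\ref{zero:a}'', which is what actually rules out any value of $\pi_1(F)$ strictly between $0$ and $\tfrac1{27}$; given that theorem, the corollary is a one-line contrapositive argument. If a self-contained phrasing is preferred, one can instead argue by the dichotomy: either $F$ has property~\ref{zero:b}, in which case $\pi_1(F)=0$ by Theorem~\ref{zero}, or it does not, in which case $\pi_1(F)\ge\tfrac1{27}$ by Fact~\ref{fact:jump}; in either case $\pi_1(F)\in\{0\}\cup[\tfrac1{27},1]$, and the corollary is simply the instance of this where the first option has been excluded by hypothesis.
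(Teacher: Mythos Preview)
Your proposal is correct and matches the paper's approach exactly: the paper states that the corollary follows immediately from the combination of Theorem~\ref{zero} and Fact~\ref{fact:jump}, and your argument spells out precisely that combination.
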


At this point the optimality of Corollary~\ref{jump} is unknown and 
it remains an open problem to determine the infimum over all non-zero values of $\pi_1(\cdot)$.

\subsection{Uniformly dense hypergraphs with vanishing density}
\label{sec:udvd}
The second concept we discuss here is closely related to the one from Definition~\ref{dfn:vtxdense}.
It was introduced by Erd\H{o}s and S\'os in~\cite{ErSo82} (see also~\cite{Er90}*{page~24}). To prepare its definition we need 
a concept of being $d$-dense when $d$ can be a function rather than just a single number
and we shall consider {\it sequences} of hypergraphs
instead of just one individual hypergraph.  

\begin{dfn}\phantomsection 
	\label{dfn:d-dense}
\begin{enumerate}[label=\alabel]
\item\label{it:d-dense} Let $\seq{H}=(H_n)_{n\in\NN}$ be a sequence of hypergraphs with 
$|V(H_n)|\to\infty$ as $n \to\infty$ and let
$d\colon (0, 1)\longrightarrow (0, 1)$ be a function. We say that
$\seq{H}$ is {\it $d$-dense} provided that for every $\eta\in (0, 1)$
there is an $n_0\in\NN$ such that for $n\ge n_0$ every $U\subseteq V(H_n)$
with $|U|\ge \eta\,|V(H_n)|$ satisfies
\[
	\big|U^{(3)}\cap E(H_n)\big|\geq d(\eta)\binom{|U|}{3}\,.
\] 
\item A hypergraph $F$ is called {\it frequent} if for every function 
$d\colon (0, 1)\longrightarrow (0, 1)$ and every $d$-dense sequence 
$\seq{H}=(H_n)_{n\in\NN}$ of hypergraphs there is an integer $n_0$
such that $F$ is a subhypergraph of every $H_n$ with $n\ge n_0$.
\end{enumerate}
\end{dfn}

Erd\H{o}s and S\'os~\cite{ErSo82}*{Proposition~3} described the following instructive example $(T_n)_{n\in\NN}$ 
of a sequence of \emph{ternary hypergraphs} that is $d$-dense for some function $d(\cdot)$, but not uniformly dense in the sense of Definition~\ref{dfn:vtxdense}. 
Take the vertex 
set of $T_n$ to be the set~$\{0, 1, 2\}^n$ of all sequences with length $n$ all of whose 
entries are $0$, $1$, or $2$. 
Given three distinct vertices of $T_n$, say $\seq{x}=(x_1, \dots, x_n)$,
$\seq{y}=(y_1, \dots, y_n)$, and $\seq{z}=(z_1, \dots, z_n)$ there is a least integer~$i\in [n]$ for which $x_i=y_i=z_i$ is not the case and we put a hyperedge~$\{\seq{x}, \seq{y}, \seq{z}\}$ into~$E(T_n)$ if and only if this index $i$ 
satisfies $\{x_i, y_i, z_i\}=\{0, 1, 2\}$. 
It was stated in~\cite{ErSo82} that the sequence of ternary hypergraphs is $d$-dense for 
some appropriate function $d(\cdot)$ and a short proof of this fact appeared in~\cite{FR88}.
In Section~\ref{sec:Erdoes} we obtain the following improvement.
\begin{prop}\label{prop:toptimal}
	The sequence of ternary hypergraphs $(T_n)_{n\in\NN}$ is $d$-dense for any 
	function $d\colon(0,1]\to (0,1]$ with 
	$d(\eta)<\tfrac{1}{4}\eta^{\frac{2}{\log_2(3)-1}}$.
\end{prop}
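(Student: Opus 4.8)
The plan is to use the recursive structure of the ternary hypergraphs and to induct on $n$. Splitting $V(T_n)=\{0,1,2\}^n$ according to the first coordinate into three copies $V_0,V_1,V_2$ of $\{0,1,2\}^{n-1}$, one reads off from the definition that a triple with its three vertices in three distinct classes is always an edge, that a triple with exactly two vertices in one class is never an edge, and that a triple inside a single class is an edge precisely when it is an edge of the corresponding copy of $T_{n-1}$. Writing $U_a=U\cap V_a$, $u_a=|U_a|$ and $e(U)=|U^{(3)}\cap E(T_n)|$, this gives
\[
	e(U)=u_0u_1u_2+e(U_0)+e(U_1)+e(U_2)\,.
\]
The normalising term in $E(U):=e(U)+\tfrac1{24}3^n$ is additive along this recursion (as $\tfrac1{24}3^n=3\cdot\tfrac1{24}3^{n-1}$), so $E(U)=u_0u_1u_2+E(U_0)+E(U_1)+E(U_2)$.

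Put $\tau=\tfrac{2}{\log_2 3-1}$, so that $(3/2)^\tau=4$, and set $p=\tau+3$; the key relations that make the exponent work are $(3/2)^\tau=4$ and $p-3=\tau$. I would prove by induction on $n$ that
\[
	E(U)\ \geq\ \frac{1}{24}\cdot\frac{|U|^{p}}{3^{n\tau}}\qquad\text{for every }U\subseteq\{0,1,2\}^n\,.
\]
The base case $n=1$ is the inspection of the four cases $|U|\in\{0,1,2,3\}$, with $(3/2)^\tau=4$ used to evaluate the right-hand side. For the inductive step, substitute the inductive bound for each $E(U_a)$ into the recursion above and put $z_a:=u_a/3^{n-1}\in[0,1]$ (so $\sum_a z_a=|U|/3^{n-1}$); since $3-p=-\tau$, a routine rearrangement shows that the bound for $U$ follows from the scale-free inequality
\begin{equation}
	24\,z_0z_1z_2+z_0^{p}+z_1^{p}+z_2^{p}\ \geq\ 3^{-\tau}(z_0+z_1+z_2)^{p}\,,\qquad z_0,z_1,z_2\in[0,1]. \tag{$\star$}
\end{equation}

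Establishing $(\star)$ is the heart of the matter and the step I expect to be the main obstacle. It is sharp: equality holds at $(1,1,1)$ and along the three segments $\{(z,z,0):z\in[0,1]\}$ together with their coordinate permutations, which correspond exactly to the two ``phases'' $\{0,1,2\}^k$ and $\{0,1\}^{n-k}$ of the Erd\H{o}s--S\'os extremal configuration $\{0,1\}^{n-k}\times\{0,1,2\}^k$ on which $T_n$ is as sparse as it can be. To prove $(\star)$ I would first treat the boundary of the cube: on a face such as $z_2=0$ the inequality is homogeneous of degree $p$, hence, after normalising $z_0+z_1=1$, it amounts to $z_0^{p}+(1-z_0)^{p}\geq 2^{1-p}=3^{-\tau}$, which is the convexity of $t\mapsto t^{p}$ combined with $2^{1-p}=2^{-\tau-2}=3^{-\tau}$ (again a guise of $(3/2)^\tau=4$); the faces with a coordinate equal to $1$ reduce similarly to one variable, the identities $3^{-\tau}2^{p}=2$ and $3^{-\tau}3^{p}=27$ making the resulting sections nonnegative with equality only at the claimed points. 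It then remains to rule out an interior minimiser: one checks that the only symmetric interior critical point is $z_0=z_1=z_2=(3/p)^{1/\tau}$, where the difference of the two sides of $(\star)$ equals $24z^{3}(1-z^{\tau})>0$; the accompanying asymmetric stationarity analysis (or, equivalently, the observation that each axis-parallel section of $\mathrm{LHS}-\mathrm{RHS}$ is concave-then-convex, so its minimum over the box is attained at a point with every coordinate in $\{0,1\}$ or at the convex-branch critical value) is where the bulk of the computation lies. Together these show that $\mathrm{LHS}-\mathrm{RHS}\geq 0$ on $[0,1]^3$, i.e.\ $(\star)$.

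To finish, let $\eta\in(0,1]$ be given together with a function $d$ satisfying $d(\eta)<\tfrac14\eta^{\tau}$, and set $\delta:=\tfrac14\eta^{\tau}-d(\eta)>0$. For any $U\subseteq V(T_n)$ with $|U|\geq\eta\,3^n$, the inductive bound and $|U|^{p}/3^{n\tau}=|U|^{3}(|U|/3^{n})^{\tau}\geq\eta^{\tau}|U|^{3}\geq 6\eta^{\tau}\binom{|U|}{3}$ give
\[
	e(U)\ \geq\ \frac{\eta^{\tau}|U|^{3}}{24}-\frac{3^{n}}{24}\ \geq\ \frac{\eta^{\tau}}{4}\binom{|U|}{3}-\frac{3^{n}}{24}\ =\ d(\eta)\binom{|U|}{3}+\delta\binom{|U|}{3}-\frac{3^{n}}{24}\,.
\]
Since $\binom{|U|}{3}\geq\binom{\lceil\eta 3^{n}\rceil}{3}$ is of order $\eta^{3}\,27^{n}$, the term $\delta\binom{|U|}{3}$ exceeds $\tfrac1{24}3^{n}$ once $n\geq n_0(\eta)$, and then $e(U)\geq d(\eta)\binom{|U|}{3}$. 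As $\eta$ was arbitrary, this is precisely the statement that $(T_n)_{n\in\NN}$ is $d$-dense.
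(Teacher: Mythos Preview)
Your overall framework coincides with the paper's. The paper proves the proposition via Lemma~\ref{lem:Tnisdense}, established by the same induction on $n$ using the recursion $e(U)=u_0u_1u_2+e(U_0)+e(U_1)+e(U_2)$, and reduces the inductive step to Fact~\ref{fact:7}, which is exactly your inequality~$(\star)$ (the paper's $\tau$ is your $p$, its $\rho$ your $\tau$, and $3^{3-\tau}=3^{-\rho}$). Your additive normalisation $E(U)=e(U)+\tfrac1{24}3^n$ yields the error term $\tfrac1{24}3^n$, slightly sharper than the paper's $\tfrac38\cdot 3^\ell$; the difference is cosmetic and the deduction of $d$-density at the end is the same.

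The gap is in your treatment of~$(\star)$. Two points: first, the claim that ``the faces with a coordinate equal to $1$ reduce similarly to one variable'' is not correct as stated --- on $\{z_0=1\}$ the inequality is still a genuine two-variable statement $1+z_1^{p}+z_2^{p}+24z_1z_2\ge 3^{-\tau}(1+z_1+z_2)^{p}$, and the identities $3^{-\tau}2^p=2$, $3^{-\tau}3^p=27$ only pin down its values at the corners. Second, the ``asymmetric stationarity analysis'' is acknowledged but not carried out; the concave-then-convex observation you make is true, but by itself does not rule out an asymmetric interior or face minimiser. The paper closes this by a different device: assuming a minimiser $(x_*,y_*,z_*)$ with $x_*\ge y_*\ge z_*$ and negative value, convexity handles $z_*=0$ (your face argument), and then comparing $f(x_*,y_*,z_*)$ with $x_*^{p}\,f(1,y_*/x_*,z_*/x_*)$ forces $x_*=1$. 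After that, concavity of $z\mapsto f(1,1,z)$ together with $f(1,1,0)=f(1,1,1)=0$ gives $y_*<1$, and the first-order conditions $\partial_x f\le 0$ at $x=1$ and $\partial_y f=0$ at $y=y_*$ combine to a one-variable inequality in $y_*$ that is checked numerically to fail. The scaling step is the idea your sketch is missing; it replaces the asymmetric interior analysis you anticipated.
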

Considering subsets $U\subseteq V(T_n)$ of the form $U=\{0,1\}^r\times \{0,1,2\}^{n-r}$ shows that 
Proposition~\ref{prop:toptimal} is optimal whenever $\eta=(2/3)^r$ 
for some $r\in\NN$.
Since ternary hypergraphs are $d$-dense for some function $d(\cdot)$, it follows 
that every frequent hypergraph must be
contained in some ternary hypergraph and Erd\H{o}s wondered in~\cite{Er90}
whether the converse of this holds as well. This was indeed verified 
by Frankl and R\"odl in~\cite{FR88} and the following characterisation can be viewed as an 
analogue of Theorem~\ref{zero} for $d$-dense hypergraphs.

\begin{thm}\label{thm:ternary}
A hypergraph $F$ is frequent if, and only if it occurs as a subhypergraph of a
ternary hypergraph. \qed
\end{thm}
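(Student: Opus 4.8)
The plan is as follows. The necessity of the condition has already been explained: since $(T_n)_{n\in\NN}$ is $d$-dense for a suitable function $d(\cdot)$ by Proposition~\ref{prop:toptimal}, every frequent hypergraph must be a subhypergraph of $T_n$ for all large $n$, hence of a ternary hypergraph. For the converse one first observes that frequency is inherited by subhypergraphs, so it suffices to show that each $T_N$ is itself frequent. Moreover, if $\seq{H}=(H_n)_{n\in\NN}$ is $d$-dense and $U_n\subseteq V(H_n)$ satisfies $|U_n|\ge\eta|V(H_n)|$, then $(H_n[U_n])_{n\in\NN}$ is $d'$-dense with $d'(\zeta)=d(\eta\zeta)$; combining this with a routine compactness argument, it is enough to prove: for every $d\colon(0,1)\to(0,1)$, every $\eta\in(0,1)$ and every $N\in\NN$ there is an $n_0$ such that every $d$-dense hypergraph $H$ on $n\ge n_0$ vertices has the property that every $U\subseteq V(H)$ with $|U|\ge\eta n$ spans a copy of $T_N$. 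I would actually prove the stronger \emph{counting} statement that such an $H[U]$ contains at least $c_N(\eta)\,|U|^{3^N}$ labelled copies of $T_N$ for some $c_N\colon(0,1)\to(0,1)$ --- carrying this quantitative hypothesis along is what makes the induction on $N$ close.

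The induction is on $N$. For $N\le 1$ the hypergraph $T_N$ has at most one edge, so the counting statement reduces to the defining inequality of Definition~\ref{dfn:d-dense}, which for $|U|\ge\eta n$ provides $\Omega(|U|^3)$ edges. For the inductive step the relevant structural fact is that splitting $V(T_N)=\{0,1,2\}^N$ according to the first coordinate yields a partition $V_0\dcup V_1\dcup V_2$ in which each $T_N[V_i]$ is a copy of $T_{N-1}$, every triple with one vertex in each of $V_0,V_1,V_2$ is an edge, and there are no further edges; equivalently, $T_N$ arises from three vertex-disjoint copies of $T_{N-1}$ by adding all cross-triples. Hence, to locate $T_N$ inside $H[U]$ it suffices to find disjoint sets $A_1,A_2\subseteq U$, each spanning a copy of $T_{N-1}$, such that the \emph{common completion}
\[
  S\ :=\ \bigl\{x\in U\setminus(A_1\cup A_2)\colon \{x,y,z\}\in E(H)\ \text{for all}\ y\in A_1,\ z\in A_2\bigr\}
\]
has size at least $\zeta n$ for a suitable constant $\zeta=\zeta(\eta,N,d)>0$: for then the inductive hypothesis applied to $H[S]$ supplies a copy $A_0$ of $T_{N-1}$ inside $S$, and $A_0\dcup A_1\dcup A_2$ spans a copy of $T_N$ because every triple of $A_0\times A_1\times A_2$ is an edge by the definition of~$S$.

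It remains to produce such $A_1,A_2$, and this is where the uniform density enters, through an averaging argument. For $x\in U$ let $c_x$ be the density of the link graph $L_x=\{yz\colon xyz\in E(H)\}$ within $U$; since $H[U]$ has edge density at least $d(\eta)$, one gets $\frac1{|U|}\sum_{x\in U}c_x\ge d(\eta)-o(1)$, so at least $\tfrac12 d(\eta)\,|U|$ vertices $x$ satisfy $c_x\ge\tfrac12 d(\eta)$. Now pick $A_1,A_2$ as two disjoint \emph{random} copies of $T_{N-1}$ in $H[U]$; there are many such by the inductive counting hypothesis. A vertex $x$ lies in the common completion of $(A_1,A_2)$ precisely when the pair of $(3^{N-1})$-sets $(A_1,A_2)$ spans a complete bipartite graph inside $L_x$, and by the K\H{o}v\'ari--S\'os--Tur\'an bound together with graph supersaturation every $x$ with $c_x\ge\tfrac12 d(\eta)$ has this happen with probability at least some $\beta=\beta(\eta,N)>0$ --- \emph{provided} the copies of $T_{N-1}$ are sufficiently equidistributed relative to $L_x$, which is, again, where the counting hypothesis is invoked. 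Consequently $\EE[|S|]=\sum_{x\in U}\Pr[x\in S]\ge\tfrac12 d(\eta)\,|U|\cdot\beta\ge\zeta n$, so some choice of $(A_1,A_2)$ works; a weighted version of the same computation, which also keeps track of the third copy $A_0$, upgrades this to the claimed count $c_N(\eta)\,|U|^{3^N}$ of copies of $T_N$.

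The main obstacle is the bookkeeping required to run this averaging so that it simultaneously yields a \emph{linear-sized} common completion \emph{and} the desired abundance of copies of $T_{N-1}$ inside $S$, $A_1$ and $A_2$ --- that is, to propagate the quantitative counting hypothesis cleanly through the induction, in particular to justify that copies of $T_{N-1}$ are spread out enough relative to the link graphs $L_x$. It is exactly here that the \emph{uniform} density hypothesis is essential rather than mere positive density: since $T_2$ is not tripartite, Erd\H{o}s's theorem gives $\pi(T_N)\ge\pi(T_2)>0$ for every $N\ge 2$, so there do exist $3$-uniform hypergraphs of positive edge density containing no copy of $T_N$ at all. Frankl and R\"odl~\cite{FR88} carry out this induction directly; alternatively, one can first invoke the hypergraph regularity method to replace $H_n$ by a bounded-size ``reduced'' hypergraph that still obeys a uniform density condition, find the relevant pattern there, and transfer it back by a counting lemma, thereby confining the difficulty to a finite and more controlled setting.
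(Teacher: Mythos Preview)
Your forward direction is fine and coincides with the paper's. For the backward direction your overall strategy --- a supersaturation/counting statement proved by induction --- also matches the paper, and you correctly locate the crux: showing that some pair $(A_1,A_2)$ of $T_{N-1}$-copies has a linear-sized common completion $S$. The gap is in the mechanism you propose for this. K\H{o}v\'ari--S\'os--Tur\'an supersaturation tells you that a dense link graph $L_x$ contains $\Omega(|U|^{2m})$ copies of $K_{m,m}$ with $m=3^{N-1}$; but you need copies of $K_{m,m}$ whose two sides both lie in your set $\mathcal{T}$ of $T_{N-1}$-copies, and the counting hypothesis you carry --- abundance of $T_{N-1}$-copies in every large \emph{set} --- gives no control on how $\mathcal{T}$ sits relative to an auxiliary \emph{graph} on $U$. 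The ``equidistribution relative to $L_x$'' you invoke is not a consequence of that hypothesis, and KST does not bridge the two families of $m$-sets. This is a genuine missing ingredient, not bookkeeping.

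The paper sidesteps the issue by strengthening the induction: it inducts on $v(F)$ over \emph{all} subhypergraphs $F$ of ternary hypergraphs, not just on $N$ for $F=T_N$. With the tripartition $V(F)=W_1\cup W_2\cup W_3$ arranged so that $|W_3|\ge 2$, one forms $F_*$ from $F$ by deleting all but one vertex $z$ of $W_3$. Since $v(F_*)<v(F)$ and $F_*$ is again of ternary type, the induction hypothesis already yields $\hom(F_*,H)\ge\xi_* n^{v(F_*)}$. Writing $\hom(F_*,H)=\sum_{\phi\in\Hom(F_{12},H)}|A_\phi|$, where $A_\phi$ is the set of admissible images of $z$ --- i.e.\ precisely your common completion --- a single averaging produces $\Omega(n^{v(F_{12})})$ embeddings $\phi$ with $|A_\phi|=\Omega(n)$. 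Now $A_\phi$ is a large \emph{set}, so $d$-density plus a second call to the induction hypothesis (for $F_3=F[W_3]$) gives $\hom(F_3,A_\phi)=\Omega(n^{v(F_3)})$ and the step closes. The point is that the stronger inductive statement lets one apply the hypothesis to the intermediate $F_*$ and thereby reduce everything to counting inside large \emph{sets}, which is exactly what uniform density controls; your induction on $N$ is too weak to access $F_*$, since $F_*$ is not of the form $T_{N-1}$.
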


It is not hard to show (see Lemma~\ref{lem:decidable}) that if $F$ is a subhypergraph of some ternary hypergraph, then~$F\subseteq T_{|V(F)|}$ and, consequently, Theorem~\ref{thm:ternary} entails,
that it is decidable whether a given hypergraph is frequent or not.

\subsection*{Organisation} The proof of the implication 
``\ref{zero:b} $\Longrightarrow$~\ref{zero:a}'' of Theorem~\ref{zero} utilises the hypergraph 
regularity method that is revisited in Section~\ref{sec:regular}. This method allows us in 
Section~\ref{sec:reduce} to reduce the problem of embedding hypergraphs satisfying the 
condition~\ref{zero:b} in Theorem~\ref{zero} into uniformly dense hypergraphs to a problem
concerning so-called {\it reduced hypergraphs}. This reduction will be carried out in 
Section~\ref{sec:reduce} and the main argument will then be given in Section~\ref{sec:proof}.
In Section~\ref{sec:Erdoes} we prove Proposition~\ref{prop:toptimal}, which implies the forward implication of Theorem~\ref{thm:ternary}. 

For a more complete presentation we include a short proof of the backward implication of Theorem~\ref{thm:ternary} as well, which follows the lines of the proof in~\cite{FR88}. 
In contrast to the proof of the implication ``\ref{zero:b} $\Longrightarrow$~\ref{zero:a}'' of Theorem~\ref{zero}  this proof is somewhat simpler
and is based on a supersaturation argument. Extensions of our results to $k$-uniform hypergraphs with $k>3$
will be discussed in the concluding remarks.

\section{Hypergraph regularity}
\label{sec:regular}

A key tool in the proof of Theorem~\ref{zero} is the regularity lemma for $3$-uniform hypergraphs. 
We follow the approach from~\cites{RoSchRL,RoSchCL} combined with the results from~\cite{Gow06} 
and~\cite{NPRS09}. 

For two disjoint sets $X$ and $Y$ we denote by $K(X,Y)$ the complete bipartite graph with 
that vertex partition.
We say that a bipartite graph $P=(X\dcup Y,E)$ is \emph{$(\delta_2, d_2)$-regular} 
if for all subsets 
$X'\subseteq X$ and $Y'\subseteq Y$ we have 
\[
	\big|e(X',Y')-d_2|X'||Y'|\big|\leq \delta_2 |X||Y|\,,
\]
where $e(X',Y')$ denotes the number of edges of $P$ with one vertex in $X'$ and one vertex in~$Y'$.
Moreover, for $k\geq 2$ we say a $k$-partite graph $P=(X_1\dcup \dots\dcup X_k,E)$ is $(\delta_2, d_2)$-regular, 
if all its  $\binom{k}{2}$ naturally 
induced bipartite subgraphs $P[X_i,X_j]$ are $(\delta_2, d_2)$-regular. 
For a tripartite graph $P=(X\dcup Y\dcup Z,E)$
we denote by $\cK_3(P)$ the triples of vertices spanning a triangle in~$P$, i.e., 
\[
	\cK_3(P)=\big\{\{x,y,z\}\subseteq X\cup Y\cup Z\colon xy, xz, yz\in E\big\}\,.
\]
If the tripartite graph $P$ is $(\delta_2, d_2)$-regular, then the \emph{triangle counting lemma}
implies
\begin{equation}
	\label{eq:TCL}
		|\cK_3(P)|\leq d_2^3|X||Y||Z|+3\delta_2|X||Y||Z|\,.
\end{equation}

We say a $3$-uniform hypergraph $H=(V,E_H)$ is regular w.r.t.\ a tripartite graph $P$ if it matches 
approximately
the same proportion of triangles for every subgraph $Q\subseteq P$. 

\begin{dfn}
\label{def:reg}
A $3$-uniform hypergraph $H=(V,E_H)$ is \emph{$(\delta_3,d_3)$-regular w.r.t.\ 
a tripartite graph $P=(X\dcup Y\dcup Z,E_P)$} 
with $V\supseteq  X\cup Y\cup Z$ if for every tripartite subgraph $Q\subseteq P$ we have 
\[
	\big||E_H\cap\cK_3(Q)|-d_3|\cK_3(Q)|\big|\leq \delta_3|\cK_3(P)|\,.
\]
Moreover, we simply say \emph{$H$ is $\delta_3$-regular w.r.t.\ $P$}, if it is $(\delta_3,d_3)$-regular for some $d_3\geq 0$.
We also define the \emph{relative density} of $H$ w.r.t.\ $P$ by
\[
	d(H|P)=\frac{|E_H\cap\cK_3(P)|}{|\cK_3(P)|}\,,
\]
where we use the convention $d(H|P)=0$ if $\cK_3(P)=\emptyset$. If $H$ is not $\delta_3$-regular w.r.t.\ $P$, then we simply refer to it as \emph{$\delta_3$-irregular}.
\end{dfn}

The regularity lemma for $3$-uniform hypergraphs, introduced by Frankl and R\"odl in~\cite{FR}, provides for 
a hypergraph $H$ a partition of its vertex set and a partition of the edge sets of the complete bipartite 
graphs induced by the vertex partition such that for appropriate constants $\delta_3$, $\delta_2$,~and~$d_2$ 
\begin{enumerate}[label=\nlabel]
	\item the bipartite graphs given by the partitions are $(\delta_2,d_2)$-regular and
	\item $H$ is $\delta_3$-regular for ``most'' tripartite graphs $P$ given by the partition.
\end{enumerate}
In many proofs based on the regularity method it is
convenient to ``clean'' the regular partition provided by the lemma. In particular, 
we shall disregard hyperedges of~$H$ that belong to $\cK_3(P)$ where $H$ is not $\delta_3$-regular or 
where $d(H|P)$ is very small. These properties are rendered in the following somewhat standard
corollary of the regularity lemma.

\begin{thm}
	\label{thm:TuRL}\pushQED{\qed} 
	For every $d_3>0$, $\delta_3>0$ and  $m\in\NN$, and every function 
	$\delta_2\colond \NN \to (0,1]$,
	there exist integers~$T_0$ and $n_0$ such that for every $n\geq n_0$
	and every $n$-vertex $3$-uniform hypergraph $H=(V,E)$ the following holds.
	
	There exists a subhypergraph $\hat H=(\hat V,\hat E)\subseteq H$, an integer $\l\leq T_0$,
	a vertex partition $V_1\dcup\dots\dcup V_m=\hat V$, 
	and for all integers $i$, $j$ with $1\leq i<j\leq m$ there exists 
	a partition 
	$\cP^{ij}=\{P^{ij}_\alpha=(V_i\dcup V_j,E^{ij}_\alpha)\colond 1\leq \alpha \leq \l\}$ 
	of $K(V_i,V_j)$ satisfying the following properties
	\begin{enumerate}[label=\rmlabel]
		\item\label{TuRL:1} $|V_1|=\dots=|V_m|\geq (1-\delta_3)n/T_0$,
		\item for every $1\leq i<j\leq m$ and $\alpha\in [\l]$ the bipartite graph $P^{ij}_\alpha$ is $(\delta_2(\l),1/\l)$-regular,
		\item $\hat H$ is $\delta_3$-regular w.r.t. all tripartite graphs
		\begin{equation}\label{eq:triad}
			P^{ijk}_{\alpha\beta\gamma}
			=P^{ij}_\alpha\dcup P^{ik}_\beta\dcup P^{jk}_\gamma
			=(V_i\dcup V_j\dcup V_k, E^{ij}_\alpha\dcup E^{ik}_{\beta}\dcup E^{jk}_{\gamma})\,,
		\end{equation}
		with $1\leq i<j<k\leq m$ and $\alpha$, $\beta$, $\gamma\in[\l]$, 
		and $d({\hat H}|P^{ijk}_{\alpha\beta\gamma})$ is either $0$ or at least $d_3$, 
		\item\label{TuRL:4} and for every $1\leq i<j<k\leq m$ we have 
				\[e_{\hat H}(V_i,V_j,V_k)\geq e_{H}(V_i,V_j,V_k)-(d_3+\delta_3)|V_i||V_j||V_k|\,.\qedhere\]
	\end{enumerate}
	\popQED
\end{thm}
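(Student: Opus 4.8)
The plan is to deduce Theorem~\ref{thm:TuRL} from the Frankl--R\"odl regularity lemma for $3$-uniform hypergraphs (as recorded above, following \cites{RoSchRL,RoSchCL,Gow06,NPRS09}) by a routine ``cleaning'' of the regular partition it produces. First I would fix the parameters to be fed to that lemma: the relative density threshold $d_3$ itself; a much smaller irregularity constant $\delta_3^\ast$ (below $\tfrac{\delta_3}{4}$, say); and an auxiliary bipartite‑regularity function lying below both the given function $\delta_2(\cdot)$ and the function $\l\mapsto\tfrac13\l^{-3}$, so that the triangle counting lemma~\eqref{eq:TCL} forces every triad $P^{ijk}_{\alpha\beta\gamma}$ arising from the output partition to satisfy $|\cK_3(P^{ijk}_{\alpha\beta\gamma})|\le 2\l^{-3}|V_i||V_j||V_k|$. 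Then I would take $T_0\ge m$ to be at least the resulting bound on the number $\l$ of pair‑graph pieces, and choose $n_0$ accordingly.

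Next I would apply the regularity lemma to $H$ in the form that keeps the vertex partition prescribed: I would take $V_1,\dots,V_m$ to be an \emph{arbitrary} partition of $V(H)$ into $m$ classes of equal size (discarding fewer than $m$ vertices), and note that the energy‑increment argument then only needs to refine the partitions of the bipartite graphs $K(V_i,V_j)$, which already forces $\delta_3^\ast$‑regularity with respect to all but $\delta_3^\ast\l^3$ of the triads on each triple $i<j<k$, at the cost of a larger --- but still bounded --- number $\l\le T_0$ of pieces. This yields the partitions $\cP^{ij}=\{P^{ij}_\alpha:\alpha\in[\l]\}$ satisfying the bipartite‑regularity requirement~(ii); with $\hat V:=V_1\cup\dots\cup V_m$ the size requirement~\ref{TuRL:1} holds because $T_0\ge m$ and $n\ge n_0$.

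To produce the subhypergraph $\hat H$ I would discard from $H$ every hyperedge that either does not meet three distinct classes, or meets classes $V_i$, $V_j$, $V_k$ (say in vertices $x\in V_i$, $y\in V_j$, $z\in V_k$ with $i<j<k$) but whose \emph{unique} associated triad $P^{ijk}_{\alpha\beta\gamma}$ --- the one with $xy\in P^{ij}_\alpha$, $xz\in P^{ik}_\beta$, and $yz\in P^{jk}_\gamma$ --- is $\delta_3^\ast$‑irregular or has $d(H|P^{ijk}_{\alpha\beta\gamma})<d_3$. To check~\ref{TuRL:4} I would bound, for a fixed triple $i<j<k$, the hyperedges lost between $V_i$, $V_j$, $V_k$: those lying in irregular triads number at most $\delta_3^\ast\l^3\cdot 2\l^{-3}|V_i||V_j||V_k|\le\tfrac{\delta_3}{2}|V_i||V_j||V_k|$ by the bound above, while those in regular triads of relative density below $d_3$ contribute at most $d_3|V_i||V_j||V_k|$ in total, since the triangle sets $\cK_3(P^{ijk}_{\alpha\beta\gamma})$ partition $V_i\times V_j\times V_k$. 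For requirement~(iii) I would observe that a triad which was irregular or of low density for $H$ now contains no hyperedge of $\hat H$ at all, hence has relative density $0$ and is vacuously $\delta_3$‑regular; and a triad that was $\delta_3^\ast$‑regular of density $\ge d_3$ keeps every one of its hyperedges --- again because each tri‑class hyperedge lies in only one triad, so deleting the contents of bad triads does not touch it --- and therefore remains $\delta_3$‑regular.

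The two points that need actual care are the parameter choice --- picking $\delta_3^\ast$ and the auxiliary $\delta_2$‑function so that, via~\eqref{eq:TCL}, the triangles inside the exceptional triads are genuinely negligible against $\delta_3|V_i||V_j||V_k|$ --- and the little structural observation that a hyperedge crossing three classes belongs to exactly one triad, which is precisely what upgrades the ``regular for most triads'' output of the regularity lemma to the ``regular for \emph{all} triads'' conclusion demanded by~(iii). Neither should be a serious obstacle: this is the standard post‑processing of a regular partition, and the only mild wrinkle is invoking the regularity lemma with the vertex partition prescribed rather than produced.
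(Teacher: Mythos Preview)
The paper does not actually prove this theorem in the text; it is stated with a \qed\ and the authors simply point to \cite{RRS-a}*{Corollary~3.3} for the standard derivation from the regularity lemma of~\cite{RoSchRL}. Your cleaning argument --- delete every hyperedge sitting in an irregular or sparse triad, then use that each crossing hyperedge lies in a \emph{unique} triad so that good triads are untouched while bad ones are emptied --- is exactly the standard one, and your bookkeeping for~\ref{TuRL:4} via~\eqref{eq:TCL} is correct.

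The one place worth a comment is your ``mild wrinkle'' of prescribing the $m$-class vertex partition in advance and refining only the pair partitions. This is not how the lemma from~\cite{RoSchRL} is normally applied: the iteration that simultaneously secures $(\delta_2(\ell),1/\ell)$-regularity of every $P^{ij}_\alpha$ and $\delta_3^\ast$-regularity of most triads typically alternates pair refinements with vertex refinements, so holding the vertex partition fixed requires a separate argument. The more usual route --- and the one implicit in the cited reference --- is to apply the full regularity lemma, obtain a vertex partition into some $t$ classes with $m\le t\le T_0$, and then simply retain $m$ of them; since the theorem only promises a subhypergraph on $\hat V\subseteq V$, discarding the remaining $t-m$ classes costs nothing, and~\ref{TuRL:1} follows from $|V_i|\approx n/t\ge (1-\delta_3)n/T_0$. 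After that the cleaning step is identical to what you wrote. (One further small adjustment: the raw regularity lemma bounds the \emph{total} number of irregular triads, not the number per triple $\{i,j,k\}$; but since $m$ is a fixed input, choosing $\delta_3^\ast$ small relative to $\binom{m}{3}^{-1}\delta_3$ immediately converts the global bound into the per-triple one you use.)
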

Owing to their special r\^ole we shall refer to the tripartite graphs considered in~\eqref{eq:triad} as \emph{triads}.

A proof of Theorem~\ref{thm:TuRL} based on a refined version of 
the regularity lemma from~\cite{RoSchRL}*{Theorem~2.3}
can be found in~\cite{RRS-a}*{Corollary 3.3}. 

We shall use the \emph{counting/embedding lemma}, which allows us to embed 
hypergraphs of fixed isomorphism type into appropriate and sufficiently regular and 
dense triads of the partition provided by Theorem~\ref{thm:TuRL}. 
It is a direct consequence of~\cite{NPRS09}*{Corollary~2.3}.

\begin{thm}[Embedding Lemma]
	\label{thm:EL}
	Let a hypergraph $F$ with vertex set $[f]$ and $d_3>0$ be given.
    Then there exist $\delta_3>0$ and functions 
	$\delta_2\colon \NN\to(0,1]$ and  $N\colon \NN\to\NN$
	such that the following holds for every $\l\in\NN$.
	 
	Suppose $P=(V_1\dcup\dots\dcup V_f, E_P)$ is a $(\delta_2(\l),\frac{1}{\l})$-regular, 
	$f$-partite graph whose vertex classes satisfy $|V_1|=\dots=|V_f|\geq N(\l)$ and 
	suppose $H$ is an $f$-partite, $3$-uniform hypergraph
	such that for all edges $ijk$ of $F$ we have
	\begin{enumerate}[label=\alabel]
		\item\label{EL:a} $H$ is $\delta_3$-regular w.r.t.\ to the tripartite graph 
		$P[V_i\dcup V_j\dcup V_k]$ and 
		\item\label{EL:b} $d(H|P[V_i\dcup V_j\dcup V_k])\geq d_3$,
	\end{enumerate} 
	then $H$ contains a copy of $F$. In fact, there is a monomorphism $q$ from $F$ to $H$
	with~$q(i)\in V_i$ for all $i\in[f]$.\qed
\end{thm}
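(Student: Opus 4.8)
The plan is to deduce the Embedding Lemma from the counting lemma for regular $3$-uniform hypergraph complexes, that is, from~\cite{NPRS09}*{Corollary~2.3}; the only real work is to put our hypotheses into the exact shape required there and to keep the quantifier order straight. Such a counting lemma asserts, for a fixed $3$-uniform hypergraph $F$ on $[f]$ and a target density $d_3>0$, that there are $\delta_3>0$ and functions $\delta_2\colon\NN\to(0,1]$ and $N\colon\NN\to\NN$ so that whenever, for some $\l\in\NN$, one is handed vertex classes $V_1,\dots,V_f$ of common size at least $N(\l)$, a $(\delta_2(\l),1/\l)$-regular $f$-partite graph $P$ on $V_1\dcup\dots\dcup V_f$, and a $3$-uniform hypergraph $H$ that is $\delta_3$-regular with relative density at least $d_3$ with respect to each triad $P[V_i\dcup V_j\dcup V_k]$ with $ijk\in E(F)$, then the number of embeddings of $F$ into $H$ that send $i$ to $V_i$ and every edge of $F$ to a hyperedge of $H$ is at least $\bigl(c(d_3,\l)-o(1)\bigr)\prod_{i\in[f]}|V_i|$ for some constant $c(d_3,\l)>0$; in particular this count is positive once $N(\l)$ is chosen large enough. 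I would simply let $\delta_3$, $\delta_2$, and $N$ be the objects furnished by this lemma.

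Given the data of Theorem~\ref{thm:EL} I would then argue as follows. Since $P$ is $(\delta_2(\l),1/\l)$-regular as an $f$-partite graph, each induced bipartite graph $P[V_i\dcup V_j]$ is $(\delta_2(\l),1/\l)$-regular, hence so is every triad $P^{ijk}:=P[V_i\dcup V_j\dcup V_k]$. For a triple $ijk\in E(F)$, hypotheses~\ref{EL:a} and~\ref{EL:b} say exactly that $H$ is $\delta_3$-regular with $d(H|P^{ijk})\ge d_3$ with respect to $P^{ijk}$, which is precisely the input the counting lemma needs. Applying~\cite{NPRS09}*{Corollary~2.3} with this $\l$ therefore produces at least one embedding of $F$; fixing one of them yields a monomorphism $q\colon F\to H$ with $q(i)\in V_i$ for all $i\in[f]$, injectivity being automatic because the classes $V_i$ are pairwise disjoint.

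A small point to address is that some formulations of the hypergraph counting lemma assume a full regular complex, with a $\delta_3$-regular hypergraph over \emph{every} triad $P^{ijk}$, whereas Theorem~\ref{thm:EL} constrains $H$ only on triads corresponding to edges of $F$. This causes no trouble: on a triple $ijk\notin E(F)$ the hypergraph $F$ imposes no condition, so one either invokes the version stated directly for $F$ (non-edges carrying no regularity requirement), or, if only the complex version is at hand, places over each such triad the trivially $\delta_3$-regular hypergraph $\cK_3(P^{ijk})$ of relative density $1\ge d_3$ and ignores the superfluous incidences afterwards. In either reading the conclusion is the same.

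The main obstacle here is bookkeeping rather than mathematics: one must respect the dependency order throughout (so that $\delta_3$ depends only on $F$ and $d_3$ and \emph{not} on $\l$, whereas $\delta_2(\l)$ and $N(\l)$ may), and one must check that the regularity conventions used in~\cite{NPRS09} --- relative density defined via $|\cK_3(\cdot)|$, error measured against $|\cK_3(P)|$ --- agree with Definition~\ref{def:reg}, which is the form in which Theorem~\ref{thm:TuRL} later supplies the hypotheses. Since these conventions coincide, no further estimates are needed and the Embedding Lemma follows.
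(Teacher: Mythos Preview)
Your proposal is correct and follows exactly the approach the paper takes: the paper does not give a proof at all but simply records that the Embedding Lemma is a direct consequence of~\cite{NPRS09}*{Corollary~2.3} and marks the statement with a \qed. Your write-up just spells out the routine translation between the hypotheses here and those of the cited counting lemma, including the harmless issue of non-edge triads, so there is nothing to correct.
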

In an application of Theorem~\ref{thm:EL} the tripartite graphs $P[V_i\dcup V_j\dcup V_k]$ 
in~\ref{EL:a} and~\ref{EL:b} will be given by triads $P^{ijk}_{\alpha\beta\gamma}$ from the 
partition given by Theorem~\ref{thm:TuRL}.
For the proof of the direction ``\ref{zero:b} $\Longrightarrow$~\ref{zero:a}'' 
of Theorem~\ref{zero} we consider for a fixed hypergraph $F$ obeying condition~\ref{zero:b}
and fixed $\eps>0$ a sufficiently large uniformly dense hypergraph~$H$ of density~$\eps$.
We will apply the regularity lemma in the form of Theorem~\ref{thm:TuRL} to~$H$. 
The main part of the proof concerns the appropriate selection of dense and regular triads, 
that are ready for an application of the embedding lemma. In Section~\ref{sec:reduce} we formulate
a statement about reduced hypergraphs telling us that such a selection is indeed possible
and in Section~\ref{sec:proof} we give its proof.

\section{Moving to reduced hypergraphs}
\label{sec:reduce}

In our intended application of the hypergraph regularity method we need to keep 
track which triads are dense and regular and natural structures for encoding such
information are so-called reduced hypergraphs. We follow the terminology introduced 
in~\cite{RRS-d}*{Section~3}. 

Consider any finite set of indices $I$, suppose that associated with any two distinct 
indices~$i, j\in I$ we have a finite nonempty set of vertices $\cP^{ij}$, and that for 
distinct pairs of indices the corresponding vertex classes are disjoint. Assume further 
that for any three distinct indices $i,j,k\in I$ we are given a tripartite 
hypergraph $\cA^{ijk}$ with vertex classes~$\cP^{ij}$,~$\cP^{ik}$, and $\cP^{jk}$. 
Under such circumstances we call the $\binom{|I|}{2}$-partite 
hypergraph $\cA$ defined by
\[
V(\cA)=\bigdcup_{\{i,j\}\in
I^{(2)}} \cP^{ij}
\qquad \text{ and } \qquad
E(\cA)=\bigdcup_{\{i,j, k\}\in
I^{(3)}} E(\cA^{ijk})
\]
a {\it reduced hypergraph}. We also refer to $I$ as the {\it index set} of $\cA$, to the sets 
$\cP^{ij}$ as the {\it vertex classes} of $\cA$, and to the hypergraphs $\cA^{ijk}$ as the 
{\it constituents} of $\cA$. The order of the indices appearing in the 
pairs and triples of the superscripts of the vertex classes
and constituents of $\cA$ plays no r\^ole here, i.e., $\cP^{ij}=\cP^{ji}$ and $\cA^{ijk}=\cA^{kij}$ etc.
For $\mu>0$ such a reduced hypergraph $\cA$ is said to 
be {\it $\mu$-dense} if 
\[
	|E(\cA^{ijk})|\ge \mu\,|\cP^{ij}|\,|\cP^{ik}|\,|\cP^{jk}|
\]
holds for every triple $\{i, j, k\}\in I^{(3)}$.

In the light of the hypergraph regularity method, the proof of Theorem~\ref{zero}
reduces to the following statement whose proof will be given in the next section.

\begin{lemma}\label{char-modif}
Given $\mu>0$ and $f\in\NN$ there exists an integer $m$ 
such that the following holds. 
If $\cA$ is a $\mu$-dense reduced hypergraph with index set $[m]$, vertex classes $\cP^{ij}$,
and constituents $\cA^{ijk}$,
then there are 
\begin{enumerate}[label=\rmlabel]
\item indices $\lambda(1)<\dots<\lambda(f)$ in $[m]$ and
\item for each pair $1\le r<s\le f$ there are three vertices 
$P^{\lambda(r)\lambda(s)}_{\red}$, $P^{\lambda(r)\lambda(s)}_{\blue}$, 
and $P^{\lambda(r)\lambda(s)}_{\green}$ in $\cP^{\lambda(r)\lambda(s)}$ 
\end{enumerate}
such that for every triple of indices $1\le r<s<t\le m$ the three vertices 
$P^{\lambda(r)\lambda(s)}_{\red}$, $P^{\lambda(r)\lambda(t)}_{\blue}$, 
and $P^{\lambda(s)\lambda(t)}_{\green}$ 
form a hyperedge in $\cA^{\lambda(r)\lambda(s)\lambda(t)}$.
\end{lemma}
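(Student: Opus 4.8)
\textit{The plan.} I would build the indices $\lambda(1)<\dots<\lambda(f)$ together with the required vertices greedily, in $f$ rounds that add one index at a time, while maintaining a large ``reservoir'' of so-far-unused indices carrying auxiliary data which guarantees that the construction can be continued; the bound $m=m(\mu,f)$ is then dictated by the pigeon-hole estimates needed in the $f$ rounds and I would not attempt to optimise it. The structural observation driving everything is that the three colours play asymmetric roles: a vertex $P^{ab}_{\red}$ with $a<b$ occurs in the hyperedge condition only for triples $\{a,b,c\}$ with $c>b$, so it is ``forward looking'', whereas $P^{ab}_{\blue}$ occurs only for triples $\{a,c,b\}$ with $a<c<b$ and $P^{ab}_{\green}$ only for triples $\{c,a,b\}$ with $c<a$, i.e.\ these two are already determined by the indices chosen so far. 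Hence only the red vertices must be selected with foresight, and the bookkeeping is organised around this.

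Concretely, after round $j$ I would maintain: indices $\lambda(1)<\dots<\lambda(j)$; vertices $\rho_{rs},\beta_{rs},\gamma_{rs}\in\cP^{\lambda(r)\lambda(s)}$ for $1\le r<s\le j$ satisfying the conclusion of the lemma restricted to $\{\lambda(1),\dots,\lambda(j)\}$; a large reservoir $M_j\subseteq\{\lambda(j)+1,\dots,m\}$; and, for every $i\in M_j$ and every $r\le j$, \emph{precommitted} vertices $\beta^i_r,\gamma^i_r\in\cP^{\lambda(r)i}$ with $(\rho_{rs},\beta^i_r,\gamma^i_s)\in E(\cA^{\lambda(r)\lambda(s)i})$ for all $r<s\le j$. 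The last condition records that, should $i$ be promoted to $\lambda(j+1)$, its blue and green vertices towards the part already built are ready to be used. To pass from round $j$ to round $j+1$ one sets $\lambda(j+1):=\min M_j=:i_0$, then $\beta_{r,j+1}:=\beta^{i_0}_r$ and $\gamma_{r,j+1}:=\gamma^{i_0}_r$ for $r\le j$; the precommitment makes all new hyperedge conditions inside $\{\lambda(1),\dots,\lambda(j+1)\}$ hold automatically. It then remains to choose the forward-looking reds $\rho_{r,j+1}\in\cP^{\lambda(r)i_0}$ ($r\le j$) and to pass to a still-large sub-reservoir $M_{j+1}\subseteq M_j\setminus\{i_0\}$ equipped with new precommitted vertices $\beta^i_{j+1},\gamma^i_{j+1}\in\cP^{i_0i}$ for $i\in M_{j+1}$, in such a way that $(\rho_{r,j+1},\beta^i_r,\gamma^i_{j+1})\in E(\cA^{\lambda(r)i_0i})$ for all $r\le j$ and $i\in M_{j+1}$, the previously precommitted $\beta^i_r,\gamma^i_r$ with $r\le j$ being reused unchanged.

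Carrying out this last step is the crux. For each fixed $i\in M_j$ one uses that every constituent $\cA^{\lambda(r)i_0i}$ is $\mu$-dense to see, by an averaging argument, that a positive proportion of red-tuples $(\rho_{1,j+1},\dots,\rho_{j,j+1})$ admit a common green vertex $\gamma^i_{j+1}$ witnessing the required hyperedges; averaging again over $i\in M_j$ then produces one red-tuple that works simultaneously for a positive proportion of the reservoir, which is taken to be $M_{j+1}$. \textbf{The main obstacle} is that each $\beta^i_r$ was frozen in an earlier round, and a priori its ``link'' in $\cA^{\lambda(r)i_0i}$ — the bipartite graph of pairs $(\rho,\gamma)$ with $(\rho,\beta^i_r,\gamma)\in E(\cA^{\lambda(r)i_0i})$ — could be sparse or empty, which would kill the averaging. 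I would deal with this by strengthening the invariant: whenever a precommitted blue or green vertex is chosen, it is chosen (again by averaging, using $\mu$-denseness of the relevant constituents) so that its link is dense in the constituents indexed by the vertices still in the reservoir, and this extra density condition is propagated by discarding only a bounded fraction of the reservoir in each round. With these auxiliary density conditions folded into the invariant the $f$ rounds go through, and after round $f$ the vertices $\rho_{rs},\beta_{rs},\gamma_{rs}$ are exactly the desired $P^{\lambda(r)\lambda(s)}_{\red},P^{\lambda(r)\lambda(s)}_{\blue},P^{\lambda(r)\lambda(s)}_{\green}$.
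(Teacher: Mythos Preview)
Your index-by-index scheme is a natural alternative to the paper's colour-by-colour argument, and you have correctly located the crux: the frozen blue vertices $\beta^i_r$ must retain enough structure in the constituents $\cA^{\lambda(r)i_0i}$ for the next round's averaging to go through. Unfortunately the fix you describe --- strengthening the invariant so that each $\beta^i_r$ has a dense bipartite link in every relevant constituent --- is not strong enough, and the step ``a positive proportion of red-tuples admit a common green vertex $\gamma^i_{j+1}$'' does not follow from it.

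Here is the concrete obstruction. Fix $i$ in the reservoir and write $L_r\subseteq \cP^{\lambda(r)i_0}\times \cP^{i_0i}$ for the link of $\beta^i_r$ in $\cA^{\lambda(r)i_0i}$. Your invariant guarantees only that each $L_r$ has density $\ge c$. The number of tuples $(\rho_1,\dots,\rho_j,\gamma)$ with $(\rho_r,\gamma)\in L_r$ for all $r$ equals $\sum_{\gamma}\prod_r |N_{L_r}(\gamma)|$, and this quantity can vanish even when all $L_r$ are dense: simply let $L_1$ be supported on one half of the $\gamma$'s and $L_2$ on the other half. So for $j\ge 2$ there may be \emph{no} red-tuple admitting a common green vertex at all, let alone a positive proportion. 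The difficulty is that the frozen $\beta^i_r$'s for different $r$ were chosen in different rounds with no coordination between their links, and link density alone does not force the links to overlap on the $\cP^{i_0i}$-side. A second, related problem is that maintaining even the link-density invariant for the \emph{new} vertices $\beta^i_{j+1}$ is itself a ``middle-index'' selection problem (one must pick $\beta^i_{j+1}$ good for every $i'\in M_{j+1}$ with $i'<i$), which is not a single averaging step but an iterated argument of the type the paper isolates as Corollary~4.4; the phrase ``discarding a bounded fraction'' understates this.

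The paper sidesteps both issues by separating the colours into three passes. All red vertices are fixed first (Corollary~\ref{choose-a}); then, with red already in place, each blue vertex $P^{rt}_{\blue}$ is chosen so that the \emph{pair} $(P^{rs}_{\red},P^{rt}_{\blue})$ has large pair-degree for every intermediate $s$ (Corollary~\ref{choose-b}). This is exactly the joint condition your scheme cannot enforce, because in your ordering $\beta^i_r$ is committed before the relevant reds $\rho_{r,s}$ exist. Once pair-degrees are large, the green pass (Corollary~\ref{choose-c}) is a genuine one-step averaging. If you wish to rescue the index-by-index strategy, the invariant must carry pair-degree information tying each $\beta^i_r$ to specific red vertices, but then those reds must themselves be precommitted --- and you are essentially reconstructing the paper's colour-by-colour order inside each round.
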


At the end of this section we will prove that this lemma does indeed imply Theorem~\ref{zero}.
For this purpose it will be more convenient to work with an alternative definition of $\pi_1$
that we denote by $\pi_{\vvv}$.
In contrast to Definition~\ref{dfn:vtxdense}
it speaks about being dense with respect to three subsets of vertices rather than just one.  

\begin{dfn}
\label{pppqr}
A hypergraph $H=(V,E)$ of order $n=|V|$ is \emph{$(d,\eta,\vvv)$-dense} if 
for every triple of subsets $X, Y, 
Z\subseteq V$ the number $e_{\vvv}(X, Y, Z)$ of 
triples $(x,y,z)\in X\times Y\times Z$ 
with~$xyz\in E$ satisfies
\[
	e_{\vvv}(X, Y, Z)\ge d\,|X|\,|Y|\,|Z|- \eta\,n^3\,.
\] 
\end{dfn} 
Accordingly, we set
\begin{multline}\label{eq:pi3dot}
	\pi_{\vvv}(F)=\sup\bigl\{d\in[0,1]\colon \text{for every $\eta>0$ and $n\in \NN$ there exists}\\
	\text{an $F$-free, $(d,\eta, \vvv)$-dense hypergraph $H$ with $|V(H)|\geq n$}\bigr\}\,.
\end{multline}

Applying \cite{RRS-e}*{Proposition 2.5} to $k=3$ and $j=1$ we deduce that every 
hypergraph $F$ satisfies 
\begin{equation}\label{eq:3vs1}
	\pi_{\vvv}(F)=\pi_1(F)\,.
\end{equation}
Consequently it is allowed to imagine that in clause~\ref{zero:a}
of Theorem~\ref{zero} we would have written $\pi_{\vvv}(F)=0$ instead of $\pi_1(F)=0$.

\begin{proof}[Proof of Theorem~\ref{zero} assuming Lemma~\ref{char-modif}]
The implication ``\ref{zero:a} $\Longrightarrow$~\ref{zero:b}'' is implicit in Fact~\ref{fact:jump},
meaning that we just need to consider the reverse direction. Suppose to this end that a
hypergraph $F$ satisfying condition~\ref{zero:b} and some $\eps>0$ are given.
We need to check that for $\eps\gg \eta\gg n^{-1}$ every $(\eps, \eta, \vvv)$-dense
hypergraph $H$ of order $n$ contains a copy of~$F$. 

Of course, we may assume that $V(F)=[f]$ holds for some $f\in\NN$. 
Plugging $F$ and $d_3=\frac{\eps}{4}$ into the embedding lemma
we get a constant $\delta_3>0$, a function $\delta_2\colon \NN\to (0, 1]$, and a function
$N\colon \NN\to\NN$. Evidently we may assume that $\delta_3\le\frac{\eps}4$, that 
$\delta_2(\ell)\ll\ell^{-1}$, and that $N$ is increasing. Applying Lemma~\ref{char-modif}
with $\mu=\frac{\eps}{8}$ and $f$ we obtain an integer $m$. Given $d_3$, $\delta_3$, $m$,
and $\delta_2(\cdot)$ we get integers $T_0$ and $n_0$ from Theorem~\ref{thm:TuRL}. 
Finally we choose 
\[
	\eta=\frac{\eps(1-\delta_3)^3}{4T_0^3} \qqand n_1=2T_0\cdot N(T_0)\,.
\]

Now consider any $(\eps, \eta, \vvv)$-dense hypergraph $H$ of order $n\ge n_1$.
We contend that $F$ appears as a subhypergraph of $H$. To see this we take
\begin{enumerate}
\item[$\bullet$] a subhypergraph $\hat H=(\hat V,\hat E)\subseteq H$,
\item[$\bullet$] a vertex partition $V_1\dcup\dots\dcup V_m=\hat V$,
\item[$\bullet$] an integer $\ell\le T_0$,
\item[$\bullet$] and pair partitions 
$\cP^{ij}=\{P^{ij}_\alpha=(V_i\dcup V_j,E^{ij}_\alpha)\colond 1\leq \alpha \leq \l\}$ 
of $K(V_i, V_j)$ for all $1\leq i<j\leq m$
\end{enumerate}
satisfying the conditions~\ref{TuRL:1}--\ref{TuRL:4} from Theorem~\ref{thm:TuRL}.
The reduced hypergraph $\cA$ corresponding to this situation has index set $[m]$,
vertex classes $\cP^{ij}$ and a triple $\{P^{ij}_\alpha, P^{ik}_\beta, P^{jk}_\gamma\}$
is defined to be an edge of the constituent $\cA^{ijk}$ if and only if 
$d({\hat H}|P^{ijk}_{\alpha\beta\gamma})\ge d_3$. As we shall verify below, 
\begin{equation}\label{eq:Amudense}
	\cA \text{ is } \text{$\mu$-dense}.
\end{equation}
Due to Lemma~\ref{char-modif} this means that there are 
\begin{enumerate}
\item[$\bullet$] indices $\lambda(1)<\dots<\lambda(f)$ in $[m]$ and 
\item[$\bullet$] for each pair $1\le r<s\le f$ there are vertices
$P^{\lambda(r)\lambda(s)}_{\red}, P^{\lambda(r)\lambda(s)}_{\blue}, 
P^{\lambda(r)\lambda(s)}_{\green}\in \cP^{\lambda(r)\lambda(s)}$ 
\end{enumerate}
such that for every triple of indices $1\le r<s<t\le m$ the three vertices 
$P^{\lambda(r)\lambda(s)}_{\red}$, $P^{\lambda(r)\lambda(t)}_{\blue}$,
and $P^{\lambda(s)\lambda(t)}_{\green}$ 
form a hyperedge in $\cA^{\lambda(r)\lambda(s)\lambda(t)}$.
These vertices correspond to bipartite graphs forming dense regular 
triads. Since we have
\[
	|V_{\lambda(1)}|=\dots=|V_{\lambda(f)}|\ge \frac{(1-\delta_3)n}{T_0}
	\ge \frac{n_1}{2T_0}=N(T_0)\ge N(\ell)\,,
\]
the embedding lemma is applicable to the hypergraph $\hat H$ 
and to the $f$-partite graph with vertex partition
$\bigdcup_{r\in[f]}V_{\lambda(r)}$ and edge set 
$\bigdcup_{rs\in\partial F}P^{\lambda(r)\lambda(s)}_{\phi(\lambda(r), \lambda(s))}$,
where $\phi\colon \partial F\to\{\red,\blue,\green\}$ denotes any colouring
exemplifying that $F$ does indeed possess property~\ref{zero:b} from Theorem~\ref{zero}.
Consequently, the monomorphism guaranteed by Theorem~\ref{thm:EL} yields a copy of $F$ in $\hat H\subseteq H$.

So to conclude the proof it only remains to verify~\eqref{eq:Amudense}. Suppose to this end that 
some triple $\{i, j, k\}\in [m]^3$ is given. We have to verify that
\begin{equation}\label{eq:33}
	|E(\cA^{ijk})|\ge \mu\,|\cP^{ij}|\,|\cP^{ik}|\,|\cP^{jk}|=\frac{\eps}8\ell^3\,.
\end{equation}

Using that $H$ is $(\eps, \eta, \vvv)$-dense we infer
\[
	e_H(V_i, V_j, V_k)\ge \eps\,|V_i|\,|V_j|\,|V_k|-\eta n^3
\]
and by our choice of $\eta$ it follows that 
\[
	|V_i|\,|V_j|\,|V_k|\ge \left(\frac{(1-\delta_3)}{T_0}\right)^3n^3= \frac{4\eta}{\eps} n^3\,.
\]
So altogether we have 
\[
	e_H(V_i, V_j, V_k)\ge\tfrac 34\eps\,|V_i|\,|V_j|\,|V_k|\,.
\] 	
In combination with $\delta_3\le\frac\eps{4}=d_3$ and condition~\ref{TuRL:4} from
Theorem~\ref{thm:TuRL} this entails
\begin{equation}\label{eq:ehut}
	e_{\hat H}(V_i, V_j, V_k)\ge\tfrac 14\eps\,|V_i|\,|V_j|\,|V_k|\,.
\end{equation}

On the other hand, by the triangle counting lemma~\eqref{eq:TCL} and $\delta_2\ll\ell^{-1}$
each triad $P^{ijk}_{\alpha\beta\gamma}$ satisfies 
\[
	\cK_3\bigl(P^{ijk}_{\alpha\beta\gamma}\bigr)
	\le \bigl(\ell^{-3}+3\delta_2(\ell)\bigr)|V_i|\,|V_j|\,|V_k|
	\le 2\ell^{-3} |V_i|\,|V_j|\,|V_k|\,,
\]
for which reason 
\[
	e_{\hat H}(V_i, V_j, V_k)\le |E(\cA^{ijk})|\cdot 2\ell^{-3} |V_i|\,|V_j|\,|V_k|\,.
\]

Together with~\eqref{eq:ehut} this proves~\eqref{eq:33} and, hence, the implication from
Lemma~\ref{char-modif} to Theorem~\ref{zero}.
\end{proof}

\section{Proof of Theorem~\ref{zero}}
\label{sec:proof}

This entire section is devoted to the proof of Lemma~\ref{char-modif}. We begin by outlining 
the main ideas of this proof. 
The argument proceeds in three stages. In the first of them we will choose a subset 
$X\subseteq [m]$ and for any two indices $r<s$ from $X$ some vertex $P^{rs}_{\red}\in \cP^{rs}$ 
such that if $r<s<t$ are from~$X$, then $P^{rs}_{\red}$ has large degree in 
$\cA^{rst}$, where ``large'' means at least $\mu'\,|\cP^{rt}|\,|\cP^{st}|$ for some 
$\mu'$ depending only on $\mu$. This argument will have the property that for fixed 
$f$ and $\mu$ the size of~$X$ can be made as large as we wish by starting from a sufficiently
large $m$. 
Then, in the next stage, we shrink the set $X$ further to some 
$Y\subseteq X$ and select vertices $P^{rt}_{\blue}\in \cP^{rt}$ for all indices $r<t$ 
from $Y$ such that if $r<s<t$ are from~$Y$ then the pair-degree of $P^{rs}_{\red}$ and 
$P^{rt}_{\blue}$ in $\cA^{rst}$ is still reasonably large, i.e., at least 
$\mu''\,|\cP^{st}|$ for some~$\mu''$ that depends again only on $\mu$.
Finally for some $Z\subseteq Y$ of size $f$ we will manage to pick vertices 
$P^{st}_{\green}$ for $s<t$ from~$Z$ such that whenever $r<s<t$ are from $Z$ the 
triple $P^{rs}_{\red}P^{rt}_{\blue}P^{st}_{\green}$ appears in 
$\cA^{rst}$. For this to succeed we just need $|Y|$ and hence also $|X|$ and $m$ to be 
large enough depending on $f$ and~$\mu$.  We then enumerate $Z=\{\lambda(1), \dots, \lambda(f)\}$
in increasing order to conclude the argument.  

The construction we use for the first stage proceeds in $m^*=|X|$ steps. In the first step
we just select $1\in X$. In the second step we put $2$ into $X$ and we will also make a decision 
concerning~$P^{12}_{\red}$. For that we ask every candidate $k\in [3,m]$ that might be 
put into $X$ in the future to propose suitable choices for $P^{12}_{\red}$. 
This leads us to consider for each such $k$ the set $\cP^{12}_{k,\red}\subseteq \cP^{12}$
of vertices with degree $\tfrac\mu 2\cdot|\cP^{1k}|\,|\cP^{2k}|$ in $\cA^{12k}$. 
Since $\cA$ is $\mu$-dense we have $|\cP^{12}_{k,\red}|\ge \tfrac\mu2\cdot |\cP^{12}|$ for each $k\ge 3$. 
Thus we can choose a vertex $P^{12}_{\red}$ in such a manner that it belongs to 
$\cP^{12}_{k,\red}$ for many $k$'s. From now on we restrict our attention to such 
$k$'s only. The third step begins by putting the smallest such $k$ into $X$. If this happens 
to be, e.g., $7$ then we ask each still relevant $k>7$ for an opinion about 
the possible choices for the pair $(P^{17}_{\red}, P^{27}_{\red})$ and then 
we choose these two vertices in such a way that there are sufficiently many possibilities 
to continue. The general situation after $h$ such steps is described in Lemma~\ref{mh-ac} 
below and the simpler Corollary~\ref{choose-a} contains all that is needed for 
our intended application. 

When reading the statement of the following lemma it might be helpful to think of $M$,~$m$, 
and $\eps$ there as being $m$, $m^*$, and $\tfrac\mu2$ from the outline above. Also, 
$n_1, \dots, n_h$ correspond to the indices which were already put into $X$ whilst 
$n_{h+1}, \dots, n_m$ are the indices that still have a chance of being put into $X$ 
in the future.
 
\begin{lemma}\label{mh-ac}
Given $\eps\in(0, 1)$ and positive integers $m\ge h$ there exists a positive integer 
$M=M(\eps, m,h)$ for which the following is true. 
Suppose that we have 
\begin{enumerate}
\item[$\bullet$] nonempty sets $\cP^{rs}$ for ${1\le r<s\le M}$ and 
\item[$\bullet$] further sets $\cP^{rs}_{t,\red}\subseteq \cP^{rs}$ with 
$|\cP^{rs}_{t,\red}|\ge\eps\,|\cP^{rs}|$ for $1\le r<s<t\le M$, 
\end{enumerate}
then there are indices $n_1<\dots<n_m$ in $[M]$ and there are elements 
$P^{n_rn_s}_{\red}\in \cP^{n_rn_s}$ for ${1\le r<s\le h}$ such that 
\[
	P^{n_rn_s}_{\red}\in \bigcap_{t\in (s,m]}\cP_{n_t,\red}^{n_rn_s}\,.
\]
\end{lemma}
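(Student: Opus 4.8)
The plan is to prove Lemma~\ref{mh-ac} by induction on~$h$, where at each step we add one new index to the ``committed'' prefix and simultaneously fix the corresponding new red vertices, at the cost of passing to a still-large subfamily of indices that all ``agree'' with this choice. The role of~$M(\eps,m,h)$ is to guarantee that after $h$ rounds of such shrinking there still remain at least~$m$ indices to work with.

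For the base case $h=0$ (or $h=1$, whichever is notationally cleaner) there is nothing to fix, so we simply need $M\ge m$ and take $n_1<\dots<n_m$ to be the first~$m$ indices. For the inductive step, suppose the statement holds for $h-1$ with threshold function $M(\eps,\cdot,\cdot)$. Given $m\ge h$, first I would apply the inductive hypothesis with parameters $m'$ and $h-1$, where $m'$ is chosen large enough that the argument below succeeds; this produces indices $n_1<\dots<n_{m'}$ and elements $P^{n_rn_s}_\red$ for $1\le r<s\le h-1$ lying in all the relevant $\cP^{n_rn_s}_{n_t,\red}$ with $t\le m'$. Now commit the index $n_h$ to the prefix. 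For every pair $r<h$, each candidate index $t\in(h,m']$ contributes a ``vote set'' $\cP^{n_rn_h}_{n_t,\red}\subseteq\cP^{n_rn_h}$ of density $\ge\eps$; the key observation is that there must be a single element $P^{n_rn_h}_\red\in\cP^{n_rn_h}$ that belongs to the vote set of an $\eps$-fraction of the candidates~$t$. Doing this for each of the (at most~$h-1$) pairs~$r<h$ and intersecting the resulting good sets of candidates, we retain at least $\eps^{h-1}(m'-h)$ of the indices $n_{h+1},\dots,n_{m'}$; renaming these together with $n_1,\dots,n_h$ gives the desired $m$ indices provided $\eps^{h-1}(m'-h)\ge m-h$, which tells us how to pick~$m'$ and hence defines $M(\eps,m,h)=M(\eps,m',h-1)$ recursively. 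One should double-check that the newly chosen $P^{n_rn_h}_\red$ satisfies the required membership $P^{n_rn_h}_\red\in\bigcap_{t\in(h,m]}\cP^{n_rn_h}_{n_t,\red}$ after the renaming, and that the old elements $P^{n_rn_s}_\red$ for $s\le h-1$ still do — but this is automatic since we only ever shrink the set of surviving candidate indices, and the inductive hypothesis was invoked with the larger parameter~$m'$.

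The pigeonhole averaging at the heart of the step is the statement that if subsets $A_t\subseteq \cP$ each have $|A_t|\ge\eps|\cP|$ for $t$ ranging over a set~$T$, then some point of~$\cP$ lies in at least $\eps|T|$ of the~$A_t$; this is immediate by double counting $\sum_{p\in\cP}|\{t: p\in A_t\}|=\sum_t|A_t|\ge\eps|\cP||T|$. Applying it once per pair $r<h$ and then intersecting costs a factor of~$\eps$ each time, which is why the survival bound is $\eps^{h-1}$ rather than~$\eps$.

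The main obstacle is purely bookkeeping: keeping the indices, their renamings, and the nested membership conditions straight across the induction, and making sure the recursion $M(\eps,m,h)=M(\eps,\lceil (m-h)\eps^{-(h-1)}\rceil+h,\,h-1)$ is set up so that it terminates and produces a genuine (finite) bound. There is no real analytic difficulty — the content is entirely in organizing the iterated pigeonhole so that \emph{every} already-committed red vertex keeps working against \emph{all} of the still-surviving future indices, which is exactly what induction on the prefix length delivers.
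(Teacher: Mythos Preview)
Your overall strategy---induction on $h$, committing the new index $n_h$ to the prefix, fixing the new red vertices $P^{n_rn_h}_{\red}$ for $r<h$, and shrinking the tail of candidate indices accordingly---matches the paper's proof exactly, down to the recursion $M(\eps,m,h)=M(\eps,m',h-1)$ with $m'-h\approx(m-h)\eps^{-(h-1)}$.

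There is, however, one genuine slip in the pigeonhole step. You say you pick each $P^{n_rn_h}_{\red}$ by averaging over the full candidate set $(h,m']$, and then \emph{intersect} the resulting good sets of candidates, claiming this ``costs a factor of~$\eps$ each time.'' As literally written this is false: $h-1$ subsets of $(h,m']$, each of size $\ge\eps(m'-h)$, can have empty intersection. To actually obtain the bound $\eps^{h-1}(m'-h)$ you must either
\begin{enumerate}
\item[(a)] run the $h-1$ pigeonholes \emph{sequentially}, at the $r$-th step averaging only over the candidates that survived steps $1,\dots,r-1$; or
\item[(b)] do a single averaging over the product space $\prod_{r<h}\cP^{n_rn_h}$: double counting
\[
\sum_{(P_1,\dots,P_{h-1})}\bigl|\{t:P_r\in\cP^{n_rn_h}_{n_t,\red}\text{ for all }r\}\bigr|
=\sum_t\prod_{r<h}\bigl|\cP^{n_rn_h}_{n_t,\red}\bigr|
\ge \eps^{h-1}(m'-h)\prod_{r<h}\bigl|\cP^{n_rn_h}\bigr|
\]
yields an $(h-1)$-tuple $(P_1,\dots,P_{h-1})$ that is simultaneously good for at least $\eps^{h-1}(m'-h)$ values of~$t$.
\end{enumerate}
The paper takes route~(b), which is a bit cleaner since it avoids the nested refinement. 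With either fix your argument is complete and essentially identical to the paper's.
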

                  
\begin{proof}
We argue by induction on $h$. For the base case $h=1$ we may take ${M(\eps, m, 1)=m}$ 
and $n_r=r$ for all $r\in [m]$; because no vertices $P^{rs}_{\red}$ have to be chosen, 
the conclusion holds vacuously. 

Now suppose that the result is already known for some integer $h$ and all relevant pairs 
of~$\eps$ and~$m$, and that an integer $m\ge h+1$ as well as a real number $\eps\in(0,1)$ 
are given. Set
\[
	m'=h+1+\left\lceil \frac{m-h-1}{\eps^h}\right\rceil 
	\quad \text{ and  } \quad 
	M=M(\eps, m, h+1)=M(\eps, m', h)\,.
\]
To see that $M$ is as desired, let sets $\cP^{rs}$ and $\cP^{rs}_{t,\red}$ as described above 
be given. Due to the definition of $M$, there are indices $n_1<\dots<n_{m'}$ in $[M]$ and 
certain $P^{n_rn_s}_{\red}\in \cP^{n_rn_s}$ such that 
$P^{n_rn_s}_{\red}\in \cP_{n_t,\red}^{n_rn_s}$ holds whenever $1\le r<s<t\le m'$ and $s\le h$. 
We set
\[
	\ccP=\cP^{n_1n_{h+1}}\times\dots\times \cP^{n_hn_{h+1}}\,.
\]
For each $h$-tuple $(P_1, \dots, P_h)\in\ccP$ 
we write
\begin{equation}\label{eq:QP}
	Q(P_1, \dots, P_h)=
	\bigl\{ 
		t\in[h+2, m']
		\colond
		P_r\in \cP_{n_t,\red}^{n_rn_{h+1}} \ \text{for every}\  r\in[h] \bigr\}\,.
\end{equation}
By counting the elements of 
\[
	\{(t, P_1, \dots, P_h)\colond t\in Q(P_1, \dots, P_h)\}
\]
in two different ways and using the lower bounds 
$|\cP_{n_t,\red}^{n_rn_{h+1}}|\ge \eps |\cP^{n_rn_{h+1}}|$ we get
\[
	\sum_{(P_1, \dots, P_h)\in\ccP}|Q(P_1, \dots, P_h)| =
	\sum_{t=h+2}^{m'}\prod_{r=1}^{h}|\cP_{n_t,\red}^{n_rn_{h+1}}| 
	\ge (m'-h-1)\,\eps^h\,|\ccP|\,.
\]
Hence, we may fix an $h$-tuple $(P_1, \dots, P_h)\in\ccP$ with 
\[
	|Q(P_1, \dots, P_h)|\ge (m'-h-1)\eps^h\ge m-h-1\,.
\]
Now let $\ell_{h+2}< \dots< \ell_m$ be any elements from 
\[
	Q=\{n_t\st t\in Q(P_1, \dots, P_h)\}
\]
in increasing order. Set 
\[
	\ell_r=n_r \text{ for all } r\in [h+1]
	\text{ as well as } P^{n_r,n_{h+1}}_{\red}=P_r
	\text{ for all } r\in[h]\,.
\] 

We claim that the indices $\ell_1<\dots <\ell_m$ and the elements $P^{n_rn_s}_{\red}$ 
with $1\le r<s\le h+1$ satisfy the conclusion. To see this let any $1\le r<s<t\le m$ 
with $s\le h+1$ be given. 
We have to verify $P^{\ell_r\ell_s}_{\red}\in \cP_{\ell_t,\red}^{\ell_r\ell_s}$. 
If $s\le h$ this follows directly from $\ell_r=n_r$, $\ell_s=n_s$,  
$\ell_t\in\{n_{s+1}, \dots, n_{m'}\}$, and the inductive choice of the latter set. 
For the case $s=h+1$ if follows from  $t\ge h+2$, that there is some 
$q\in Q(P_1, \dots, P_h)$ with $\ell_t=n_q$. The first property of $q$ entails 
in view of~\eqref{eq:QP} that $P_r\in \cP_{n_q,\red}^{n_rn_{h+1}}$ and, 
as $P^{n_r,n_{h+1}}_{\red}=P_r$, 
this is exactly what we wanted.   
\end{proof}

The reason for having the two parameters $m$ and $h$ in this lemma is just that 
this facilitates the proof by induction on $h$. In applications one may 
always set $h=m$, since this gives the strongest possible conclusion for fixed~$m$. 
Thus it might add to the clarity of exposition if we restate this case again, using the 
occasion to eliminate some double indices as well.

\begin{cor}\label{choose-a}
Suppose that for $M\gg \max(m,\eps^{-1})$ we have 
\begin{enumerate}
\item[$\bullet$] nonempty sets $\cP^{rs}$ for ${1\le r<s\le M}$ and 
\item[$\bullet$] further sets $\cP^{rs}_{t,\red}\subseteq \cP^{rs}$ with 
$|\cP^{rs}_{t,\red}|\ge\eps\,|\cP^{rs}|$ for $1\le r<s<t\le M$, 
\end{enumerate}
then there is a subset $X\subseteq [M]$ of size 
$m$ and there are elements $P^{rs}_{\red}\in \cP^{rs}$ for $r<s$ from~$X$ 
such that 
\[
\pushQED{\qed}
P^{rs}_{\red}\in \bigcap_t\big\{\cP^{rs}_{t,\red}\colond t>s \text{ and } t\in X\bigr\}\,. 
\qedhere
\popQED
\]
\end{cor}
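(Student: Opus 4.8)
The plan is to obtain Corollary~\ref{choose-a} as the special case $h=m$ of Lemma~\ref{mh-ac}, so that essentially no new work is required beyond unwinding notation. First I would fix the implicit constant in the hypothesis ``$M\gg\max(m,\eps^{-1})$'' by demanding $M\ge M(\eps,m,m)$, where $M(\cdot,\cdot,\cdot)$ is the function produced by Lemma~\ref{mh-ac}; this is legitimate, as that function depends only on $\eps$ and $m$ once $h=m$. Given the sets $\cP^{rs}$ for $1\le r<s\le M$ and $\cP^{rs}_{t,\red}\subseteq\cP^{rs}$ with $|\cP^{rs}_{t,\red}|\ge\eps|\cP^{rs}|$ for $1\le r<s<t\le M$, I would simply discard all indices exceeding $M(\eps,m,m)$; the remaining families still satisfy the hypotheses of Lemma~\ref{mh-ac} with parameters $(\eps,m,m)$.

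Applying Lemma~\ref{mh-ac} then produces indices $n_1<\dots<n_m$ in $[M(\eps,m,m)]\subseteq[M]$ together with elements $P^{n_an_b}_{\red}\in\cP^{n_an_b}$ for all $1\le a<b\le m$ such that
\[
	P^{n_an_b}_{\red}\in\bigcap_{c\in(b,m]}\cP^{n_an_b}_{n_c,\red}\,.
\]
I would then put $X=\{n_1,\dots,n_m\}\subseteq[M]$, which has size $m$, and for a pair of elements $r<s$ of $X$ — say $r=n_a$, $s=n_b$, which forces $a<b$ — I set $P^{rs}_{\red}:=P^{n_an_b}_{\red}$. Reading the displayed containment, we obtain $P^{rs}_{\red}\in\cP^{n_an_b}_{n_c,\red}$ for every integer $c$ with $b<c\le m$. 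Since $c\mapsto n_c$ maps $\{\,c:b<c\le m\,\}$ bijectively onto $\{\,t\in X:t>s\,\}$, this is exactly the statement that $P^{rs}_{\red}\in\cP^{rs}_{t,\red}$ for all $t\in X$ with $t>s$, which is the conclusion of the corollary.

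I do not expect any genuine obstacle: the corollary is a verbatim reformulation of the $h=m$ instance of Lemma~\ref{mh-ac}, as already anticipated in the remark preceding its statement. The only point requiring care is the bookkeeping above — translating the ``increasing-enumeration'' index range $(b,m]$ appearing in the lemma into the membership condition ``$t\in X$ and $t>s$'' of the corollary, and replacing the double indices $n_an_b$ by plain elements of $X$.
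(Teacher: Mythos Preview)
Your proposal is correct and matches the paper's approach exactly: the corollary is stated there without proof (note the \qed in the displayed formula), precisely because, as the paper remarks just before it, this is nothing but the case $h=m$ of Lemma~\ref{mh-ac} restated with the double indices eliminated. Your bookkeeping translating $(b,m]$ into $\{t\in X:t>s\}$ is the only thing there is to check, and you have it right.
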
      

As discussed above, this statement will be used below for choosing the vertices~$P^{rs}_{\red}$. 
The selection principle we use for choosing the $P^{st}_{\green}$ is essentially the same, 
but we have to apply the symmetry $r\longmapsto M+1-r$ to the indices throughout. 
To prevent confusion when this happens within another argument, we  restate the foregoing result as follows.
      
\begin{cor}\label{choose-c}
Suppose that for $M\gg \max(m,\eps^{-1})$ we have 
\begin{enumerate}
\item[$\bullet$] nonempty sets $\cP^{st}$ for ${1\le s<t\le M}$ and 
\item[$\bullet$] further sets $\cP_{r,\green}^{st}\subseteq \cP^{st}$ with 
$|\cP_{r,\green}^{st}|\ge\eps\,|\cP^{st}|$ for $1\le r<s<t\le M$, 
\end{enumerate}
then there is a subset $Z\subseteq [M]$ of size $m$ and there are elements 
$P^{st}_{\green}\in \cP^{st}$ for $s<t$ from~$Z$ 
such that 
\[
	P^{st}_{\green}\in \bigcap_r\big\{\cP_{r,\green}^{st}\colond r<s \text{ and } r\in Z\bigr\}\,. 
\]
\end{cor}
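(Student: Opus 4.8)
The plan is to obtain Corollary~\ref{choose-c} from Corollary~\ref{choose-a} by the order-reversing bijection $\sigma\colon[M]\to[M]$ given by $\sigma(r)=M+1-r$, which converts statements about ``small'' indices into statements about ``large'' ones. This is exactly the symmetry alluded to in the paragraph preceding the corollary, so the argument will be short.

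First I would set up the translated data. Given the sets $\cP^{st}$ for $1\le s<t\le M$ and $\cP^{st}_{r,\green}\subseteq\cP^{st}$ for $1\le r<s<t\le M$ as in the hypothesis of Corollary~\ref{choose-c}, I define, for $1\le r'<s'\le M$,
\[
	\cQ^{r's'}=\cP^{\sigma(s')\sigma(r')}
\]
(note $\sigma(s')<\sigma(r')$ since $r'<s'$), and for $1\le r'<s'<t'\le M$,
\[
	\cQ^{r's'}_{t',\red}=\cP^{\sigma(s')\sigma(r')}_{\sigma(t'),\green}\,,
\]
which is a subset of $\cQ^{r's'}$ of size at least $\eps\,|\cQ^{r's'}|$ because $\sigma(t')<\sigma(s')<\sigma(r')$ and the corresponding hypothesis of Corollary~\ref{choose-c} applies with the roles $(r,s,t)=(\sigma(t'),\sigma(s'),\sigma(r'))$. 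Thus the families $(\cQ^{r's'})$ and $(\cQ^{r's'}_{t',\red})$ satisfy precisely the hypotheses of Corollary~\ref{choose-a} with the same parameters $m$ and $\eps$ (and the same $M\gg\max(m,\eps^{-1})$).

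Next I would invoke Corollary~\ref{choose-a}: it yields a set $X\subseteq[M]$ with $|X|=m$ and elements $P^{r's'}_{\red}\in\cQ^{r's'}$ for $r'<s'$ from $X$ such that $P^{r's'}_{\red}\in\cQ^{r's'}_{t',\red}$ whenever $t'>s'$ and $t'\in X$. I then set $Z=\sigma(X)=\{M+1-r':r'\in X\}$, which has size $m$, and for $s<t$ from $Z$ I put $P^{st}_{\green}=P^{\sigma(t)\sigma(s)}_{\red}$; since $\sigma(t)<\sigma(s)$ are in $X$, this is a well-defined element of $\cQ^{\sigma(t)\sigma(s)}=\cP^{st}$. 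Finally, given $r<s<t$ from $Z$, the indices $r'=\sigma(t)<s'=\sigma(s)<t'=\sigma(r)$ all lie in $X$ with $t'>s'$, so the conclusion of Corollary~\ref{choose-a} gives $P^{\sigma(t)\sigma(s)}_{\red}\in\cQ^{\sigma(t)\sigma(s)}_{\sigma(r),\red}=\cP^{st}_{r,\green}$, i.e.\ $P^{st}_{\green}\in\cP^{st}_{r,\green}$. As $r$ ranges over all elements of $Z$ below $s$, this is exactly the asserted membership $P^{st}_{\green}\in\bigcap_r\{\cP^{st}_{r,\green}\colon r<s\text{ and }r\in Z\}$, completing the proof.

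There is essentially no obstacle here beyond bookkeeping: the only thing that needs care is tracking how $\sigma$ permutes the three roles $r<s<t$ (it reverses them to $\sigma(t)<\sigma(s)<\sigma(r)$), and checking that the ``subscript'' index in $\cP^{st}_{r,\green}$—the \emph{smallest} of the three—corresponds under $\sigma$ to the ``superscript future'' index in $\cQ^{r's'}_{t',\red}$—the \emph{largest} of the three. Once that dictionary is fixed, the verification is a direct substitution.
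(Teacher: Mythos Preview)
Your proof is correct and follows exactly the same approach as the paper: you apply the order-reversing bijection $r\mapsto M+1-r$ to translate the data into the setting of Corollary~\ref{choose-a}, invoke that corollary, and translate back. Your write-up is in fact more detailed than the paper's, which simply states the substitutions and leaves the verification to the reader.
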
      

\begin{proof}
Set $\cP_*^{rs}=\cP^{M+1-s, M+1-r}$ for $1\le r<s\le M$ and 
$\cP^{rs}_{t,\red}=\cP_{M+1-t,\green}^{M+1-s, M+1-r}$ for $1\le r<s<t\le M$. 
Then apply Corollary~\ref{choose-a}, thus getting a certain set $X$ and some elements 
$P^{rs}_{\red}$. It is straightforward to check that 
\[
	Z=\{M+1-x\colond x\in X\}
\]
and $P^{st}_{\green}=P^{M+1-t, M+1-s}_{\red}$ are as desired.
\end{proof}

The statement that follows coincides with~\cite{RRS-e}*{Lemma~7.1}, where a short direct proof 
is given. For reasons of self-containment, however, we will show here that it follows easily
from the above Corollary~\ref{choose-c}. Subsequently it will be used in the proof of a 
lemma playing a r\^ole similar to that of Lemma~\ref{mh-ac}, but preparing the selection of 
the vertices $P^{rt}_{\blue}$ rather than~$P^{rs}_{\red}$. Specifically, the statement that 
follows will be used in that step of the proof of the next lemma that corresponds to choosing 
$P_1, \dots, P_h$ in the proof of Lemma~\ref{mh-ac}.

\begin{cor}\label{2-indices}
Suppose that for $M\gg\max(m,\eps^{-1})$ we have 
\begin{enumerate}
\item[$\bullet$] nonempty sets $W_1, \dots, W_M$ and
\item[$\bullet$] further sets $D_{rs}\subseteq W_s$ with 
$|D_{rs}|\ge\eps\,|W_s|$ for $1\le r<s\le M$,
\end{enumerate}
then there is a subset $Z\subseteq [M]$ of size $m$ and there are 
elements $d_s\in W_s$ for $s\in Z$ such that 
\[
	d_{s}\in \bigcap_r\big\{D_{rs}\colond r<s \text{ and } r\in Z\bigr\}\,. 
\]
\end{cor}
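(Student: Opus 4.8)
The plan is to deduce this statement directly from Corollary~\ref{choose-c} by an artificial ``lifting'' of the one-index data $(W_s, D_{rs})$ to the two-index data $(\cP^{st}, \cP^{st}_{r,\green})$ that the latter corollary consumes. The natural idea is to forget the ``$t$'' coordinate entirely: given the sets $W_1, \dots, W_M$ and $D_{rs} \subseteq W_s$ for $r < s$, define $\cP^{st} = W_s$ for all $1 \le s < t \le M$ and $\cP^{st}_{r,\green} = D_{rs} \subseteq W_s = \cP^{st}$ for all $1 \le r < s < t \le M$. The density hypothesis $|D_{rs}| \ge \eps|W_s|$ immediately gives $|\cP^{st}_{r,\green}| \ge \eps|\cP^{st}|$, so the input conditions of Corollary~\ref{choose-c} are met (with the same $M$, $m$, $\eps$).

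Applying Corollary~\ref{choose-c} then yields a subset $Z \subseteq [M]$ of size $m$ together with elements $P^{st}_{\green} \in \cP^{st} = W_s$ for all $s < t$ from $Z$, satisfying $P^{st}_{\green} \in \bigcap_r\{\cP^{st}_{r,\green} : r < s,\ r \in Z\} = \bigcap_r\{D_{rs} : r < s,\ r \in Z\}$ for each pair $s < t$ in $Z$. The only mild subtlety is that Corollary~\ref{choose-c} produces an element indexed by an ordered pair $(s,t)$, whereas we want a single element $d_s$ per index $s \in Z$; but since our $\cP^{st}$ and $\cP^{st}_{r,\green}$ do not depend on $t$, we may simply discard all but the ``first'' $t$. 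Concretely, if $Z = \{z_1 < \dots < z_m\}$, set $d_{z_i} = P^{z_i z_{i+1}}_{\green}$ for $i < m$; for the largest index $z_m$, which is not the smaller coordinate of any pair inside $Z$, there is no constraint at all, so we may take $d_{z_m}$ to be any element of $W_{z_m}$. Then for every $r < s$ in $Z$ the containment $d_s \in D_{rs}$ holds, which is exactly the conclusion.

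I do not expect any real obstacle here: the proof is a bookkeeping reduction, and the only thing to be careful about is matching the quantifier over $t$ in the hypotheses of Corollary~\ref{choose-c} (which requires a set $\cP^{st}$ for \emph{every} pair $s<t$, hence $M \ge 2$, harmless since $M \gg m$) and confirming that the ``$t$-free'' choice of $\cP^{st}$ is legitimate — it is, since nothing in Corollary~\ref{choose-c} forbids the vertex classes from coinciding across different values of the third index. One could alternatively give a direct self-contained argument mimicking the inductive proof of Lemma~\ref{mh-ac} in one fewer variable, but routing through Corollary~\ref{choose-c} is shorter and keeps the exposition uniform with the surrounding material.
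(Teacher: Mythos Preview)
Your reduction to Corollary~\ref{choose-c} is the right idea and matches the paper's route, but there is a genuine gap in the final step. You assert that ``for the largest index $z_m$, which is not the smaller coordinate of any pair inside $Z$, there is no constraint at all.'' This is backwards: the conclusion requires $d_s\in D_{rs}$ for every $r<s$ in $Z$, so it is the \emph{smallest} index $z_1$ that is unconstrained, while $d_{z_m}$ must lie in $\bigcap_{i<m} D_{z_i z_m}$. Your construction gives no control over $d_{z_m}$, since Corollary~\ref{choose-c} produces no element $P^{z_m t}_{\green}$ with $t>z_m$ in $Z$.

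The fix is exactly what the paper does: apply Corollary~\ref{choose-c} with $m+1$ in place of $m$ to obtain a set $Z^*$ of size $m+1$, let $z=\max(Z^*)$, set $Z=Z^*\setminus\{z\}$, and define $d_s=P^{sz}_{\green}$ for every $s\in Z$. Now every $s\in Z$ satisfies $s<z$ with $z\in Z^*$, so for any $r<s$ in $Z$ we have $r<s<z$ all in $Z^*$ and hence $d_s=P^{sz}_{\green}\in\cP^{sz}_{r,\green}=D_{rs}$. Your choice $d_{z_i}=P^{z_i z_{i+1}}_{\green}$ would also work for $i<m$ once the extra index is available, but using a single common $t=z$ is cleaner.
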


\begin{proof}
Let $M$ be so large that the conclusion of Corollary~\ref{choose-c} holds with $m+1$ in place 
of~$m$ and with the same $\eps$. Now let the sets $W_s$ and $D_{rs}$ as described above be given.

Set $\cP^{st}=W_s$ for $1\le s<t\le M$ and $\cP_{r,\green}^{st}=D_{rs}$ for $1\le r<s<t\le M$. 
By hypothesis $\cP_{r,\green}^{st}$ is a sufficiently large subset of $\cP^{st}$, so by our 
choice of $M$ there is a set $Z^*\subseteq [M]$ of size $m+1$ together with certain elements 
$P^{st}_{\green}\in \cP^{st}$ for $s<t$ from~$Z^*$ such that 
$P^{st}_{\green}\in \cP_{r,\green}^{st}$ holds whenever $r<s<t$ are from $Z^*$. 
Set $z=\max(Z^*)$, $Z=Z^*\setminus\{z\}$, and $d_s=P^{sz}_{\green}$ for all $s\in Z$. 
We claim that $Z$ and the $d_s$ are as demanded. 

The condition $|Z|=m$ is clear, so now let any pair $r<s$ from $Z$ be given. 
Then $r<s<z$ are from $Z^*$, whence $d_s=P^{sz}_{\green}\in \cP_{r,\green}^{sz}=D_{rs}$. 
\end{proof}

The next lemma deals with the selection of ``blue'' vertices.

\begin{lemma}\label{mh-b}
Given $\eps\in(0, 1)$ and nonnegative integers $m\ge h$ there exists a positive integer 
$M=M(\eps, m,h)$ for which the following is true. 
Suppose that we have 
\begin{enumerate}
\item[$\bullet$] nonempty sets $\cP^{rt}$ for ${1\le r<t\le M}$ and 
\item[$\bullet$] further sets $\cP_{s,\blue}^{rt}\subseteq \cP^{rt}$ 
	with $|\cP_{s,\blue}^{rt}|\ge\eps\,|\cP^{rt}|$ for $1\le r<s<t\le M$,
\end{enumerate}
then there are indices $n_1<\dots<n_m$ in $[M]$ and there are elements 
$P^{n_rn_t}_{\blue}\in \cP^{n_rn_t}$ for all $1\le r<t\le m$ with $r\le h$ such that 
\[
	P^{n_rn_t}_{\blue}\in \bigcap_{s}\big\{\cP_{n_s,\blue}^{n_rn_t}\colond r<s<t\big\}\,.
\] 
\end{lemma}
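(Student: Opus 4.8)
The proof will be another induction on $h$, structurally parallel to that of Lemma~\ref{mh-ac}, but with the single ``red'' vertex $P^{n_rn_s}_{\red}$ replaced by the entire packet of ``blue'' vertices $P^{n_rn_t}_{\blue}$ that share the first index $n_r$. For the base case $h=0$ one takes $M(\eps, m, 0)=m$ and $n_r=r$; no vertices need to be chosen and the conclusion is vacuous. For the inductive step, assuming the result for $h$, one runs the lemma for $h$ with a suitably inflated parameter $m'$ in place of $m$ (to be determined, roughly $m'\approx h+1+\lceil (m-h-1)\eps^{-h}\rceil$, possibly larger to accommodate the application of Corollary~\ref{2-indices}), obtaining indices $n_1<\dots<n_{m'}$ and blue vertices $P^{n_rn_t}_{\blue}$ for $r\le h$, $r<t\le m'$, with the required intersection property among those indices. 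The remaining task is to choose, for the new value $r=h+1$, vertices $P^{n_{h+1}n_t}_{\blue}\in\cP^{n_{h+1}n_t}$ for all $t>h+1$ in the index set, lying in $\cP^{n_{h+1}n_t}_{n_s,\blue}$ for every $s$ strictly between $h+1$ and $t$.

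\textbf{The new ingredient.} Unlike in Lemma~\ref{mh-ac}, where a single vertex $P^{12}_{\red}$ had to be picked to lie in many $\cP^{12}_{k,\red}$, here one must pick simultaneously a whole sequence of vertices indexed by $t$. This is precisely the situation handled by Corollary~\ref{2-indices}: set $W_t=\cP^{n_{h+1}n_t}$ for $h+2\le t\le m'$ (these are the candidate ``futures''), and for $h+2\le s<t\le m'$ put $D_{st}=\cP^{n_{h+1}n_t}_{n_s,\blue}\subseteq W_t$, which by hypothesis satisfies $|D_{st}|\ge\eps|W_t|$. Applying Corollary~\ref{2-indices} (with $m$ replaced by the desired final value and $M$ by $m'-h-1$, having chosen $m'$ large enough at the outset) yields a subset of indices and, for each $t$ in it, an element $d_t=P^{n_{h+1}n_t}_{\blue}\in W_t$ lying in $D_{st}$ for every earlier index $s$ in the subset. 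Re-indexing: let $\ell_1<\dots<\ell_m$ consist of $n_1,\dots,n_{h+1}$ together with the $n_t$ for $t$ in the subset returned by Corollary~\ref{2-indices}, keep the previously chosen $P^{n_rn_t}_{\blue}$ for $r\le h$, and set $P^{n_{h+1}n_t}_{\blue}=d_t$ for the new indices.

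\textbf{Verification and the main obstacle.} One then checks the conclusion for $\ell_1<\dots<\ell_m$: given $r<s<t$ with $r\le h+1$, if $r\le h$ the membership $P^{\ell_r\ell_t}_{\blue}\in\cP^{\ell_r\ell_t}_{\ell_s,\blue}$ is inherited from the inductive hypothesis (since $\ell_r,\ell_s,\ell_t$ all lie among $n_1,\dots,n_{m'}$ and $s$ still sits between $r$ and $t$), while if $r=h+1$ it follows from the defining property of $d_t$ in Corollary~\ref{2-indices}, because $\ell_s$ and $\ell_t$ correspond to indices in the subset with $\ell_s$ earlier. The only genuine bookkeeping is the choice of $m'$: it must be large enough that Corollary~\ref{2-indices} returns a subset of size at least $m-h-1$ (so that, together with $\ell_1,\dots,\ell_{h+1}$, we reach $m$ indices) and that the inductively supplied $M(\eps,m',h)$ is finite. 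I expect the main obstacle to be purely notational — keeping the three layers of relabelling ($n$'s from the induction, the subset from Corollary~\ref{2-indices}, the final $\ell$'s) consistent — rather than mathematical; the genuine combinatorial content has been fully packaged into Corollary~\ref{2-indices}.
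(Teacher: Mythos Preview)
Your proposal is correct and follows essentially the same approach as the paper: induction on $h$ with base case $h=0$ vacuous, the inductive step invoking the case $h$ with an inflated $m'$, and then Corollary~\ref{2-indices} applied to the sets $W_t=\cP^{n_{h+1}n_t}$ and $D_{st}=\cP^{n_{h+1}n_t}_{n_s,\blue}$ to select the new blue vertices with first index $n_{h+1}$. The only cosmetic slip is the formula $m'\approx h+1+\lceil (m-h-1)\eps^{-h}\rceil$, which is a vestige of the averaging argument in Lemma~\ref{mh-ac} and plays no r\^ole here; as you yourself note afterwards, the actual requirement on $m'$ is simply that $m'-h-1$ be large enough for Corollary~\ref{2-indices} to return $m-h-1$ indices.
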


\begin{proof}
Again we argue by induction on $h$ with the base case $h=0$ being trivial. 

For the induction step we assume that the lemma is already known for some $h$ and 
all possibilities for $m$ and $\eps$, and proceed to the case $m\ge h+1$. 
We contend that $M=M(\eps, m', h)$ is as desired when $m'$ is chosen so large that the 
conclusion of Corollary~\ref{2-indices} holds for $(m'-h-1, m-h-1)$ here in place of 
$(M, m)$ there -- with the same value of $\eps$. 

So let any sets $\cP^{rt}$ and $\cP_{s,\blue}^{rt}$ as described above be given. 
The choice of $M$ guarantees the existence of some indices $n_1<\dots <n_{m'}$ in $[M]$ 
together with certain elements~$P^{n_rn_t}_{\blue}$ satisfying the conclusion of 
Lemma~\ref{mh-b} with $m'$ in place of $m$. The $m$ indices we are requested to find will 
be $n_1, \dots, n_{h+1}$ and $(m-h-1)$ members of the set $\{n_{h+2}, \dots, n_{m'}\}$, 
so in order to gain notational simplicity we may assume $n_r=r$ for all $r\in[m']$. 
Thus we have $P^{rt}_{\blue}\in \cP_{s,\blue}^{rt}$ whenever $1\le r<s<t\le m'$ and 
$r\le h$. 

Let us now define $W_j=\cP^{h+1,h+j+1}$ for all $j\in [m'-h-1]$ and 
$D_{ij}=\cP_{h+i+1,\blue}^{h+1,h+j+1}$ for all $i<j$ from $[m'-h-1]$. 
Then the conditions of Corollary~\ref{2-indices} are satisfied, meaning that there is a 
subset $Z$ of $[m'-h-1]$ of size $m-h-1$ together with certain elements $d_{j}\in W_j$ for 
$j\in Z$ such that we have $d_j\in D_{ij}$ whenever $i<j$ are from $Z$.      

We contend that the set of the $m$ indices we are supposed to find can be taken to be 
\[
	[h+1]\cup\bigl((h+1)+Z\bigr)\,.
\]
To see this we may for simplicity assume $Z=[m-h-1]$, so that the set of our $m$ indices 
is simply $[m]$. Recall that we have already found above certain elements 
$P^{rt}_{\blue}\in \cP^{rt}$ for $1\le r<t\le m$ with $r\le h$ such that 
$P^{rt}_{\blue}\in \cP_{s,\blue}^{rt}$ holds whenever $1\le r<s<t\le m$ and $r\le h$. 
So it remains to find further elements $P^{h+1,t}_{\blue}\in \cP^{h+1,t}$ for $t\in[h+2,m]$ 
with $P^{h+1,t}_{\blue}\in \cP_{s,\blue}^{h+1,t}$ whenever $h+2\le s<t\le m$. To this end, 
we use the vertices obtained by applying Corollary~\ref{2-indices} and
set $P^{h+1, t}_{\blue}=d_{t-h-1}$ for all $t\in[m+2,h]$. Observe that 
$P^{h+1, t}_{\blue}\in W_{t-h-1}=\cP^{h+1, t}$ holds for all relevant $t$. Moreover, if 
$h+2\le s<t\le m$, then we have indeed 
$P^{h+1, t}_{\blue}=d_{t-h-1}\in D_{s-h-1, t-h-1}=\cP_{s,\blue}^{h+1, t}$. 
Thereby the proof by induction on~$h$ is complete.   
\end{proof}

For the same reasons as before we restate the case $h=m$ as follows.

\begin{cor}\label{choose-b}
Suppose that for $M\gg\max(m, \eps^{-1})$ we have 
\begin{enumerate}
\item[$\bullet$] nonempty sets $\cP^{rt}$ for $1\le r<t\le M$ and
\item[$\bullet$] further sets $\cP_{s,\blue}^{rt}\subseteq \cP^{rt}$ with $|\cP_{s,\blue}^{rt}|\ge \eps\,|\cP^{rt}|$
	for $1\le r<s<t\le M$,
\end{enumerate}
then there is a subset $Y\subseteq [M]$ of size $m$ and there are elements 
$P^{rt}_{\blue}\in \cP^{rt}$ for $r<t$ from $Y$ such that
\[
	\pushQED{\qed}
	P^{rt}_{\blue}\in 
	\bigcap_s\big\{\cP_{s,\blue}^{rt}\colond r<s<t \text{ and } s\in Y\bigr\}\,. 
	\qedhere \popQED
\]
\end{cor}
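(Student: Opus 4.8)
The plan is to obtain Corollary~\ref{choose-b} as the special case $h=m$ of Lemma~\ref{mh-b}, in exactly the way Corollary~\ref{choose-a} was extracted from Lemma~\ref{mh-ac}.

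First I would record that the hypothesis $M\gg\max(m,\eps^{-1})$ is precisely what allows us to assume $M\ge M(\eps,m,m)$, where $M(\eps,\cdot,\cdot)$ is the function supplied by Lemma~\ref{mh-b} and depends only on $\eps$ and its two integer arguments. Then, given sets $\cP^{rt}$ and $\cP^{rt}_{s,\blue}$ as in the corollary, I would apply Lemma~\ref{mh-b} with $h=m$. This yields indices $n_1<\dots<n_m$ in $[M]$ together with elements $P^{n_rn_t}_{\blue}\in\cP^{n_rn_t}$ for all $1\le r<t\le m$ with $r\le h=m$ --- the side condition $r\le m$ being automatically satisfied, so that such an element is available for every pair $r<t$ from $[m]$ --- such that $P^{n_rn_t}_{\blue}\in\cP^{n_rn_t}_{n_s,\blue}$ holds whenever $r<s<t$.

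Next I would put $Y=\{n_1,\dots,n_m\}$, which has size $m$, and for a pair $r<t$ of elements of $Y$ I would define $P^{rt}_{\blue}$ to be the element produced above for the corresponding pair of subscripts; that is, if $r=n_a$ and $t=n_c$ then $P^{rt}_{\blue}:=P^{n_an_c}_{\blue}$. To check the displayed condition, let $r<s<t$ be elements of $Y$, say $r=n_a$, $s=n_b$, $t=n_c$ with $a<b<c$. The elements of $Y$ strictly between $r$ and $t$ are exactly $n_{a+1},\dots,n_{c-1}$, so $s=n_b$ is one of them; the conclusion of Lemma~\ref{mh-b} then gives $P^{n_an_c}_{\blue}\in\cP^{n_an_c}_{n_b,\blue}$, which reads $P^{rt}_{\blue}\in\cP^{rt}_{s,\blue}$. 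Intersecting over all admissible $s\in Y$ gives the asserted membership, completing the deduction.

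Since Corollary~\ref{choose-b} is a literal specialisation of Lemma~\ref{mh-b}, there is no genuine obstacle; the only thing to watch is the bookkeeping of indices --- namely that the superscript pair and the quantified subscript in the corollary line up directly with $(n_r,n_t)$ and $n_s$ in the lemma, with no index reversal needed here (in contrast to the passage from Corollary~\ref{choose-a} to Corollary~\ref{choose-c}), and that passing to $Y=\{n_1,\dots,n_m\}$ requires only renaming the selected indices, not relabelling the sets themselves.
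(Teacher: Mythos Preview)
Your proposal is correct and matches the paper's approach exactly: the paper simply states Corollary~\ref{choose-b} as the case $h=m$ of Lemma~\ref{mh-b} (with the remark ``For the same reasons as before we restate the case $h=m$ as follows'') and gives no further argument, so your explicit unpacking of this specialisation is precisely what is intended.
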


After these preparations we are ready to verify Lemma~\ref{char-modif}.

\begin{proof}[Proof of Lemma~\ref{char-modif}] Suppose 
\[
	m\gg m_*\gg m_{**}\gg \max(f, \mu^{-1})\,.
\]
Consider any three indices $1\le r<s<t\le m$. For a vertex $P\in \cP^{rs}$ we denote the degree 
of $P$ in $\cA^{rst}$ by $d_t(P)$. In other words, this is the number of pairs 
$(Q, R)\in \cP^{rt}\times \cP^{st}$ with $\{P, Q, R\}\in E(\cA^{rst})$. 
Further, we set
\[
	\cP^{rs}_{t,\red}=
	\bigl\{P\in \cP^{rs}\colond d_t(P)\ge\tfrac\mu2\cdot|\cP^{rt}|\,|\cP^{st}|\bigr\}\,.
\]
Since
\begin{align*}
	\mu\,|\cP^{rs}|\,|\cP^{rt}|\,|\cP^{st}| & 
		\le \big|E\bigl(\cA^{rst}\bigr)\big|=\sum_{P\in \cP^{rs}}d_t(P)
	=\sum_{P\in \cP^{rs}\setminus\cP^{rs}_{t,\red}}d_t(P)+\sum_{P\in \cP^{rs}_{t,\red}}d_t(P) \\
		&\le \tfrac\mu2\cdot|\cP^{rs}|\,|\cP^{rt}|\,|\cP^{st}|
			+|\cP^{rs}_{t,\red}|\,|\cP^{rt}|\,|\cP^{st}|\,,
\end{align*}
we have $|\cP^{rs}_{t,\red}|\ge \tfrac\mu2\cdot|\cP^{rs}|$. So applying 
Corollary~\ref{choose-a} with $\bigl( m, m^*, \tfrac\mu2\bigr)$ here in place of 
$(M, m, \eps)$ there we get a set $X\subseteq[m]$ of size $m_*$ together with some 
vertices $P^{rs}_{\red}$ satisfying the condition mentioned there. For simplicity we relabel 
our indices in such a way that $X=[m^*]$, intending to find the required indices 
$\lambda(1), \dots, \lambda(f)$ in $[m^*]$. This completes what has been called the first 
stage of the proof in the outline at the beginning of this section.

Next we look at any three indices $1\le r<s<t\le m_*$. Recall that we just achieved 
$d_t(P^{rs}_{\red})\ge \tfrac\mu2\cdot|\cP^{rt}|\,|\cP^{st}|$. We write $p(P,Q)$ for the 
pair-degree of any two vertices $P\in \cP^{rs}$ and $Q\in \cP^{rt}$ in $\cA^{rst}$, i.e., 
for the number of triples of this hypergraph containing both $P$ and $Q$. 
Let us define
\[
	\cP_{s,\blue}^{rt}
	=\bigl\{Q\in \cP^{rt}\colond p(P^{rs}_{\red},Q)\ge\tfrac\mu4\cdot|\cP^{st}|\bigr\}\,.
\]
Starting from the obvious formula
\[
	d(P^{rs}_{\red})=\sum_{Q\in \cP^{rt}}p(P^{rs}_{\red},Q)\,,
\]
the same calculation as above discloses $|\cP_{s,\blue}^{rt}|\ge \tfrac\mu4\cdot|\cP^{rt}|$. 
So we may apply Corollary~\ref{choose-b} with $\bigl( m_*, m_{**}, \tfrac\mu4\bigr)$ here 
instead of $(M, m, \eps)$ there in order to find a subset $Y$ of $[m_*]$ of size~$m_{**}$ 
together with certain vertices $P^{rt}_{\blue}$. As before it is allowed to suppose 
$Y=[m_{**}]$, in which case we have 
$p(P^{rs}_{\red}, P^{rt}_{\blue})\ge \tfrac\mu4\cdot|\cP^{st}|$ 
whenever $1\le r<s<t\le m_{**}$. 

Having thus completed the second stage we look at any three indices $1\le r<s<t\le m_{**}$. 
Let $\cP_{r,\green}^{st}$ denote the set of all vertices $R$ from $\cP^{st}$ for which the triple 
$\{P^{rs}_{\red}, P^{rt}_{\blue}, R\}$ belongs to $\cA^{rst}$. Due to our previous choices 
we have $|\cP_{r,\green}^{st}|\ge \tfrac\mu4\cdot|\cP^{st}|$. So we may apply 
Corollary~\ref{choose-c} with $\bigl( m_{**}, f, \tfrac\mu4\bigr)$ here rather than 
$(M, m, \eps)$ there, thus getting a certain set $Z\subseteq[m_{**}]$ and certain 
vertices $P^{st}_{\green}\in \cP^{st}$ for $s<t$ from $Z$. As always we may suppose that 
$Z=[f]$, so that $\{P^{rs}_{\red}, P^{rt}_{\blue}, P^{st}_{\green}\}$ becomes a triple of 
$\cA^{rst}$ whenever $1\le r<s<t\le f$. Now it is plain that the indices $\lambda(r)=r$ for 
$r\in [f]$ are as desired. 
\end{proof}

\section{Uniformly dense with vanishing density}
\label{sec:Erdoes}
We reprove Theorem~\ref{thm:ternary} from~\cite{FR88} and we devote to each implication a separate section.
\subsection{The forward implication} The statement that every frequent hypergraph 
is contained in one and, hence, eventually in all sufficiently large ternary hypergraphs,
is a direct consequence of the fact that the sequence $(T_n)_{n\in\NN}$ is itself $d$-dense for an appropriate
function $d\colon (0, 1]\to (0, 1]$. This observation is due to to Erd\H{o}s and S\'os~\cite{ErSo82} who
left the verification to the reader. 
In~\cite{FR88}*{Proposition~3.1} it was shown that the sequence of ternary hypergraphs is $d$-dense 
for some function $d(\eta)=\eta^\rho$ with $\rho>10$. Here we sharpen this estimate and establish Proposition~\ref{prop:toptimal}, which gives the optimal exponent 
\begin{equation}\label{eq:tau}
	\rho=\frac{2}{\log_2(3)-1}\approx3.419\dots\,.
\end{equation}
More precisely, we prove the following lemma, which yields Proposition~\ref{prop:toptimal}.
\begin{lemma}\label{lem:Tnisdense}
For $\rho$ given in~\eqref{eq:tau}, $\l\ge 1$, $X\subseteq V(T_\l)$, and $|X|=\eta\cdot 3^\l$ 
we have
\[
	e(X)\ge\frac{1}{4}\eta^{\rho}\cdot \frac{|X|^3}6-\frac 38\cdot 3^{\l}\,.
\]
\end{lemma}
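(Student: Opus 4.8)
The plan is to prove Lemma~\ref{lem:Tnisdense} by induction on $\ell$, exploiting the recursive structure of the ternary hypergraphs. Write $N=3^\ell$, so $|X|=\eta N$, and let $X_0$, $X_1$, $X_2$ denote the three parts of $X$ according to the first coordinate of each vertex; put $|X_a|=\eta_a 3^{\ell-1}$ for $a\in\{0,1,2\}$, so that $\eta_0+\eta_1+\eta_2=3\eta$. A triple $\{\seq x,\seq y,\seq z\}\subseteq X$ is a hyperedge of $T_\ell$ either because the three first coordinates are exactly $\{0,1,2\}$ (a ``split'' edge), or because at least two of the first coordinates coincide and—after passing to the appropriate coordinate—the edge is witnessed inside one of the $X_a$'s (a ``recursive'' edge). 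The split edges number exactly $|X_0||X_1||X_2|$. Triples lying entirely within one part $X_a$ contribute $e(X_a)$ by definition of the ternary hypergraph (deleting the common first coordinate identifies $X_a$ with a subset of $V(T_{\ell-1})$); triples with a coordinate pattern like $aab$ with $a\neq b$ contribute nothing, since the least distinguishing coordinate is the first and $\{a,a,b\}\neq\{0,1,2\}$. Hence
\[
	e(X)=|X_0|\,|X_1|\,|X_2|+\sum_{a\in\{0,1,2\}}e(X_a)\,.
\]

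\textbf{Setting up the induction.} For the base case $\ell=1$ one checks the inequality directly (it is weak enough that even $e(X)\ge 0$ suffices once $|X|\le 3$). For the inductive step, apply the inductive hypothesis to each $X_a\subseteq V(T_{\ell-1})$, giving $e(X_a)\ge\tfrac14\eta_a^\rho\cdot\tfrac{|X_a|^3}{6}-\tfrac38\cdot3^{\ell-1}$. Summing and using $|X_a|=\eta_a 3^{\ell-1}$, the error terms combine to $3\cdot\tfrac38\cdot3^{\ell-1}=\tfrac38\cdot3^\ell$, exactly the error term we are allowed. So it remains to show, writing everything in terms of $3^{\ell-1}$ and cancelling, that
\[
	\eta_0\eta_1\eta_2\cdot 3 \;+\; \frac{1}{4}\cdot\frac{1}{6}\sum_{a}\eta_a^{\rho+3}\cdot\frac{1}{3^{\rho}}\;\ge\;\frac14\cdot\frac{(\eta_0+\eta_1+\eta_2)^{\rho+3}}{6\cdot 3^{\rho+3}}\cdot 3^3\,,
\]
after multiplying the target $\tfrac14\eta^\rho|X|^3/6$ out with $\eta=\tfrac13(\eta_0+\eta_1+\eta_2)$ and $|X|^3=(\eta_0+\eta_1+\eta_2)^3 3^{3(\ell-1)}$. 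This reduces the whole lemma to a clean finite-dimensional inequality in the three nonnegative reals $\eta_0,\eta_1,\eta_2\in[0,1]$ — with no dependence on $\ell$ — which is exactly where the specific value $\rho=2/(\log_2 3-1)$ must enter.

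\textbf{The analytic core.} The key inequality to establish is: for all $t_0,t_1,t_2\ge 0$,
\[
	t_0 t_1 t_2 + c\sum_{a} t_a^{\rho+3}\;\ge\; c\,(t_0+t_1+t_2)^{\rho+3}\cdot 3^{-(\rho+2)} \cdot \tfrac{1}{?}
\]
— the exact constants I would pin down by normalisation, but the structural point is that it is homogeneous of degree $\rho+3$, so by scaling one may assume $t_0+t_1+t_2=1$, and then one must show $t_0t_1t_2 \ge c'\bigl(3^{-(\rho+3)} - \tfrac13\sum t_a^{\rho+3}\bigr)$ or the reverse, depending on signs. The extremal configuration is expected to be $t_0=t_1=\tfrac12$, $t_2=0$ (equivalently $X=\{0,1\}^{r}\times\{0,1,2\}^{\ell-r}$ in the original picture, the configuration noted right after Proposition~\ref{prop:toptimal} as showing optimality): there the split-edge term vanishes and we need $2\cdot c(\tfrac12)^{\rho+3}\ge c\cdot 3^{-(\rho+3)}\cdot(\text{const})$, which forces $(3/2)^{\rho+3}$ to equal a power of $2$, i.e. $(3/2)^\rho = 2$, i.e. $\rho\log_2(3/2)=1$, i.e. $\rho=1/\log_2(3/2)=1/(\log_2 3-1)$ — up to the factor $2$ in the statement this is the claimed exponent, and the spare factor of $\tfrac14$ in front absorbs the slack at non-extremal points and at the base case. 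The main obstacle is precisely verifying this three-variable inequality with the correct constant: one convexity/smoothing argument reduces to the boundary $t_2=0$, and on the segment $t_0+t_1=1$ one checks that $t_0t_1 + \tfrac14(t_0^{\rho+3}+t_1^{\rho+3})\cdot(\text{scaling const})$ is minimised at the symmetric point, a single-variable calculus check. I would organise this as: (i) the recursive edge-count identity; (ii) reduction to the homogeneous three-variable inequality; (iii) reduction to the boundary by a smoothing argument; (iv) the one-variable optimisation that both verifies the inequality and explains the value of $\rho$.
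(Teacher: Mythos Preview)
Your inductive scaffold is exactly right and matches the paper: the recursion $e(X)=|X_0||X_1||X_2|+\sum_a e(X_a)$, the observation that the three error terms $\tfrac38\cdot 3^{\ell-1}$ recombine to $\tfrac38\cdot 3^\ell$, and the reduction to a three-variable inequality independent of~$\ell$. After clearing common factors that inequality reads (with $\tau=\rho+3$)
\[
	x^\tau+y^\tau+z^\tau+24\,xyz \;\ge\; 3^{3-\tau}(x+y+z)^\tau
	\qquad\text{for all } x,y,z\in[0,1]\,,
\]
which is precisely the paper's Fact~\ref{fact:7}.

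The gap is in your ``analytic core''. The inequality is \emph{not} homogeneous: the term $24\,xyz$ has degree~$3$, the others degree~$\tau\approx 6.4$. So you cannot normalise to $t_0+t_1+t_2=1$ by scaling; indeed for $x=y=z=t>1$ the inequality fails, so the box constraint $x,y,z\in[0,1]$ is essential. Consequently your ``smoothing to the boundary $t_2=0$'' cannot work as stated: besides the face equality cases $(s,s,0)$, there is a second equality point at the corner $(1,1,1)$ (check: $3+24=27=3^{3-\tau}\cdot 3^\tau$), and any argument must account for both. Your derivation of~$\rho$ also goes astray --- the correct equality condition at $(1/2,1/2,0)$ is $2^{1-\tau}=3^{3-\tau}$, which does give $\rho=2/(\log_2 3-1)$; the discrepancy you found is not absorbed by the~$\tfrac14$, since a multiplicative constant cannot repair a wrong exponent.

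The paper's proof of this inequality is correspondingly more delicate: it assumes a negative minimum at $(x_*,y_*,z_*)$, uses convexity to rule out $z_*=0$, then \emph{exploits the non-homogeneity} via the scaling $f(1,y_*/x_*,z_*/x_*)$ to force $x_*=1$; shows $f(1,1,z)$ is concave with $f(1,1,0)=f(1,1,1)=0$ to force $y_*<1$; and finally combines the boundary inequality $\partial_x f\le 0$ at $x=1$ with the interior equation $\partial_y f=0$ at $y_*$ to reach a numerical contradiction. You have the right reduction, but you still need a genuine argument for Fact~\ref{fact:7}.
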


For the proof of this lemma we shall utilise the following inequality.

\begin{fact}\label{fact:7}
If $x$, $y$, $z\in [0, 1]$ and  $\tau=\rho+3$ for $\rho$ given in~\eqref{eq:tau}, then
\[
	x^\tau+y^\tau+z^\tau+24\,xyz\ge 3^{3-\tau}(x+y+z)^\tau\,.
\]
\end{fact}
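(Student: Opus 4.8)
The plan is to put $c:=3^{3-\tau}$ and $f(x,y,z):=x^\tau+y^\tau+z^\tau+24\,xyz-c(x+y+z)^\tau$, and to prove $f\ge 0$ on $[0,1]^3$ by locating where the minimum of $f$ over this compact cube can occur. The whole argument rests on the identity that defines $\rho$: since $\rho=\log_{3/2}4$ we have $(3/2)^\tau=(3/2)^{\rho+3}=4\cdot\tfrac{27}{8}=\tfrac{27}{2}$, and hence
\[
	c\cdot 3^\tau=27,\qquad c\cdot 2^\tau=27\,(2/3)^\tau=2,\qquad 2^{\,1-\tau}=c .
\]
These three equalities are exactly what turn the claimed inequality into an equality at $(1,1,1)$, at $(1,1,0)$, and — via the convexity bound used below — along the whole segment $\{(t,t,0)\colon t\in[0,1]\}$, and they account for every ``$=0$'' that will appear.

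The key step is to rule out an interior minimiser. Set $\gamma:=c^{1/(\tau-2)}=3^{(3-\tau)/(\tau-2)}$. Writing $\ell=\log_2 3$ one computes $\tau-2=\tfrac{\ell+1}{\ell-1}$ and $3-\tau=\tfrac{-2}{\ell-1}$, so $\gamma=3^{-2/(\ell+1)}$, whence $\gamma>\tfrac13$ precisely because $\ell>1$. If $f$ had a local minimum at some $(x,y,z)\in(0,1)^3$, then the diagonal entries of its Hessian would be nonnegative; since $\partial_{xx}f=\tau(\tau-1)\bigl(x^{\tau-2}-c(x+y+z)^{\tau-2}\bigr)$ and likewise for $y$ and $z$, this forces $x,y,z\ge\gamma(x+y+z)$, so $x+y+z\ge 3\gamma(x+y+z)>x+y+z$, a contradiction. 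Therefore the minimum of $f$ on the cube is attained at a point with at least one coordinate in $\{0,1\}$, and by symmetry we may assume it is the third one.

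If that coordinate is $0$, convexity of $t\mapsto t^\tau$ gives $x^\tau+y^\tau\ge 2^{1-\tau}(x+y)^\tau=c(x+y)^\tau$, so $f(x,y,0)\ge 0$; this already covers all points with some coordinate equal to $0$ (in particular the vertices $(0,0,0)$, $(1,0,0)$, $(1,1,0)$). Otherwise that coordinate is $1$, and the same Hessian criterion applied to $\psi(x,y):=f(x,y,1)$ on $(0,1)^2$ shows that the minimiser cannot lie in the open square, so we may assume a further coordinate, say the second, lies in $\{0,1\}$. If it is $0$ then $f(x,0,1)=x^\tau+1-c(1+x)^\tau\ge 0$ again by convexity together with $2^{1-\tau}=c$; if it is $1$ then $\theta(x):=f(x,1,1)=x^\tau+24x+2-c(2+x)^\tau$ satisfies $\theta''(x)=\tau(\tau-1)\bigl(x^{\tau-2}-c(2+x)^{\tau-2}\bigr)<0$ on $[0,1]$, because $\bigl(\tfrac{x}{2+x}\bigr)^{\tau-2}\le 3^{2-\tau}<3^{3-\tau}=c$, so $\theta$ is strictly concave with $\theta(0)=2-c\,2^\tau=0$ and $\theta(1)=27-c\,3^\tau=0$, whence $\theta\ge 0$ (this also covers $(1,1,1)$). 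This exhausts all cases, so $f\ge 0$ on $[0,1]^3$.

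The only real subtlety I foresee is recognising that the strict inequality $\gamma>\tfrac13$ — equivalently $\log_2 3>1$ — is exactly the leverage needed, and then applying the ``nonnegative Hessian diagonal'' criterion not once but on the cube and on its two-dimensional faces, so that the minimiser is driven down to the eight vertices and to the two exceptional edges where the term $24\,xyz$ disappears and the identity $2^{1-\tau}=c$ takes over. Everything else is just the convexity/concavity of $t\mapsto t^\tau$ and the three numerical identities recorded in the first paragraph; no genuinely computational obstacle remains.
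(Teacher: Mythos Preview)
Your proof is correct and genuinely different from the paper's. Both arguments locate the minimum of $f$ on the cube and dispose of the boundary cases in the same way (convexity when a coordinate vanishes, concavity of $\theta(x)=f(x,1,1)$ on $[0,1]$ with $\theta(0)=\theta(1)=0$). The divergence is in the treatment of interior minima. The paper assumes a minimum $\xi<0$ at $(x_*,y_*,z_*)$ with $x_*\ge y_*\ge z_*>0$, uses a scaling trick comparing $f(x_*,y_*,z_*)$ with $x_*^\tau f(1,y_*/x_*,z_*/x_*)$ to force $x_*=1$, then combines the first-order conditions $\partial_x f\le 0$ and $\partial_y f=0$ at the minimiser into a one-variable polynomial inequality in $y_*$, which is falsified by a short but genuinely numerical computation (using $\tau>6.4$ and an explicit evaluation at $t_*\in[5/9,4/7]$). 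You instead invoke the second-order necessary condition: at an interior local minimum the diagonal Hessian entries are nonnegative, which forces each coordinate to be at least $\gamma=c^{1/(\tau-2)}$ times the sum, and the single clean inequality $\gamma>\tfrac13$ (equivalently $\log_2 3>1$) yields the contradiction $1\ge 3\gamma$ immediately; the same idea on the face $z=1$ gives $x+y\ge \frac{2\gamma}{1-2\gamma}>2$. Your route thus replaces the paper's scaling-plus-numerics endgame by a one-line Hessian observation, at the modest cost of having to iterate it on the two-dimensional face; it is shorter and exposes more transparently that the entire inequality hinges on $\gamma>\tfrac13$.
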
 

\begin{proof}
In the proof the following identity will be handy to use
\begin{equation}\label{eq:tauid}
	2^{\tau-1}=3^{\tau-3}\,.
\end{equation}
As the unit cube is compact, there is a point $(x_*, y_*, z_*)\in [0, 1]^3$
at which the continuous function $f\colon [0, 1]^3\to\RR$ given by 
\[
	(x, y, z)\longmapsto x^\tau+y^\tau+z^\tau+24\,xyz-3^{3-\tau} (x+y+z)^\tau
\] 
attains its minimum value,
say $\xi$. Due to symmetry we may suppose that $x_*\ge y_*\ge z_*$. Assume 
for the sake of contradiction that $\xi<0$.

Since $\tau>1$, convexity implies 

\[
	x^\tau+y^\tau
	\ge
	2\Big(\frac{x+y}{2}\Big)^\tau 
	=
	2^{1-\tau}(x+y)^\tau
	\overset{\eqref{eq:tauid}}{=}
	3^{3-\tau}(x+y)^\tau\,.
\]
Consequently, $f(x, y, 0)\ge 0$ for all real $x, y\in [0, 1]$ and we have  $x_*, y_*, z_*>0$.

The minimality of $\xi$ implies
\begin{align*}
	x_*^\tau \xi  \le x_*^\tau f\bigl(1, \tfrac{y_*}{x_*}, \tfrac{z_*}{x_*}\bigr)
	&=x_*^\tau+ y_*^\tau+z_*^\tau+24\,x_*^{\tau-3}\cdot x_*y_*z_*-3^{3-\tau}(x_*+y_*+z_*)^\tau \\
	&=\xi+24(x_*^{\tau-3}-1)x_*y_*z_*\,,
\end{align*}
i.e., $24(1-x_*^{\tau-3})x_*y_*z_*\le \xi(1-x_*^\tau)$, which due to the assumption $\xi<0$ is only possible 
if~$x_*=1$. In other words, the function $x\longmapsto f(x, y_*, z_*)$ from $[0, 1]$ to $\RR$ 
attains its minimum at the boundary point $x=1$ and for this reason we have 
$\frac{\rmd f(x, y_*, z_*)}{\rmd x}\big\vert_{x=1}\le 0$, i.e.,
\begin{equation}\label{eq:7a}
	\tau+24\,y_*z_*\le \tau\cdot3^{3-\tau}(1+y_*+z_*)^{\tau-1}\,.
\end{equation}

Next we observe that the function $z\longmapsto f(1, 1, z)$ from $[0, 1]$ to $\RR$
is concave, because 
\begin{align*}
	\frac{\rmd^2f(1, 1, z)}{\rmd z^2}
	&=
	(\tau-1)\tau\left(z^{\tau-2}-3^{3-\tau}(2+z)^{\tau-2}\right)\\
	&=
	(\tau-1)\tau\left(\frac{(3z)^{\tau-2}-3(2+z)^{\tau-2}}{3^{\tau-2}}\right)
	< 
	0\,.
\end{align*}
Together with
\[
	f(1, 1, 0)
	=2-3^{3-\tau}\cdot 2^{\tau}\
	\overset{\eqref{eq:tauid}}{=}0 
	\qqand 
	f(1, 1, 1) = 27-3^{3-\tau}\cdot 3^{\tau}=0
\]
this proves that $f(1, 1, z)\ge 0$ holds 
for all $z\in [0, 1]$, which in view of $x_*=1$ yields $y_*<1$. Thus the function 
$y\longmapsto f(1, y, z_*)$ from $[0, 1]$ to $\RR$ attains its minimum at the interior point 
$y=y_*$ and we infer $\frac{\rmd f(1, y, z_*)}{\rmd y}\big\vert_{y=y_*}= 0$, i.e.,
\[
	\tau y_*^{\tau-1}+24z_*= \tau\cdot3^{3-\tau}(1+y_*+z_*)^{\tau-1}\,.
\]
In combination with~\eqref{eq:7a} this proves $24(1-y_*)z_*\geq\tau(1-y_*^{\tau-1})$
and recalling $y_*\geq z_*$ we arrive at
\begin{equation}\label{eq:7aa}
	24(1-y_*)y_*\ge \tau(1-y_*^{\tau-1})>\frac{32}{5}(1-y_*^5)\,,
\end{equation}
where we used $\tau=\rho+3>6.4$ for the last inequality (see~\eqref{eq:tau}).
Dividing by $(1-y_*)y_*$ leads to
\begin{equation}\label{eq:7b}
	\frac{1+y_*+y_*^2+y_*^3+y_*^4}{y_*}
	=
	\frac{1-y_*^5}{(1-y_*)y_*}	
	\overset{\eqref{eq:7aa}}{<}
	\frac{15}{4}\,.
\end{equation}

Now for the function $h\colon (0, 1)\to\RR$ given by $h(t)=\frac 1t+1+t+t^2+t^3$
we have 
\[
	h'(t)<0 \quad \Longleftrightarrow \quad t^2(1+2t+3t^2)<1\,.
\]
Consequently, there is a unique point $t_*\in (0, 1)$, at which $h$ attains its global minimum
and a short calculation reveals $t_*\in \bigl[\frac{5}{9}, \frac{4}{7}\bigr]$.

From~\eqref{eq:7b} we may now deduce
\[
	\left(\frac 1{t_*}+1+t_*\right)+t_*^2+t_*^3<\frac {15}4\,.
\]

Since $t\longmapsto\frac1t+1+t$ is decreasing on $(0, 1)$, this may be weakened
to
\[
	\frac{7}{4}+1+\frac{4}{7}+\left(\frac{5}{9}\right)^2
	+\left(\frac{5}{9}\right)^3 <\frac{15}4\,,
\]
which, however, is not the case. 
Thus $\xi\ge 0$ and  
Fact~\ref{fact:7} is proved.
\end{proof}

Lemma~\ref{lem:Tnisdense}
follows by a simple inductive argument from the inequality from Fact~\ref{fact:7}.
\begin{proof}[Proof of Lemma~\ref{lem:Tnisdense}]
The case $\l=1$ is clear, since then the right-hand side cannot be positive.
Proceeding inductively we assume from now on that the lemma holds for $\l-1$
in place of $\l$ and look at an arbitrary set $X\subseteq V(T_\l)$.

Let $V(T_\l)=V_1\dcup V_2\dcup V_3$ be a partition of the vertex set of $T_\l$ such that 
\begin{enumerate}
	\item[$\bullet$] each of $V_1$, $V_2$, and $V_3$ induces a copy of $T_{\l-1}$
	\item[$\bullet$] and all triples $v_1v_2v_3$ with $v_i\in V_i$ for $i=1, 2, 3$
		are edges of $T_\l$.
\end{enumerate}

Setting $X_i=X\cap V_i$ and $\eta_i=|X_i|/3^{\l-1}$ for $i=1, 2, 3$ we get
\begin{align*}
	e(X) &=e(X_1)+e(X_2)+e(X_3)+|X_1||X_2||X_3| \\
		 &\ge \left(\frac{\eta_1^{\rho+3}+\eta_2^{\rho+3}+\eta_3^{\rho+3}+24\,\eta_1\eta_2\eta_3}{4}\right)
		 \frac{\bigl(3^{\l-1}\bigr)^3}{6}-3\cdot\frac 38\cdot  3^{\l-1}
\end{align*}
from the induction hypothesis. In view of Fact~\ref{fact:7} it follows that
\begin{equation}\label{eq:7ex}
	e(X)\ge\frac{27\eta^{\rho+3}}{4}\cdot \frac{\bigl(3^{\l-1}\bigr)^3}6-\frac 38\cdot 3^{\l}\,,
\end{equation}
where 
\[
	\eta=\frac{\eta_1+\eta_2+\eta_3}{3}=\frac{|X_1|+|X_2|+|X_3|}{3^\l}=\frac{|X|}{3^\l}\,,
\]
meaning that~\eqref{eq:7ex} simplifies to the desired estimate 
\[
	e(X)\ge\frac{\eta^\rho}{4}\cdot \frac{|X|^3}6-\frac 38\cdot 3^{\l}\,. \qedhere
\]
\end{proof}

We conclude this subsection by observing that frequent hypergraphs on $\l$ vertices must be 
contained in the ternary hypergraph on $3^{\l}$ vertices.

\begin{lemma}\label{lem:decidable}
If a hypergraph $F$ on $\ell$ vertices is frequent, then it is a subhypergraph of the 
ternary hypergraph $T_\ell$.
\end{lemma}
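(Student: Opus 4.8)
The statement to prove is Lemma~\ref{lem:decidable}: if a hypergraph $F$ on $\ell$ vertices is frequent, then $F\subseteq T_\ell$. The natural strategy is to run the definition of ``frequent'' against a carefully chosen $d$-dense sequence built out of a \emph{single} ternary hypergraph, namely $T_\ell$ itself, suitably ``blown up'' so that its orders tend to infinity. Concretely, I would let $T_\ell^{(n)}$ denote the hypergraph obtained from $T_\ell$ by replacing each vertex with a set of $n$ twins (a vertex is adjacent in a triple iff the corresponding triple of original vertices forms a hyperedge, with triples landing inside a single twin-class or a single pair of twin-classes declared non-edges — or, more cleanly, use the lexicographic product $T_\ell[K_n^{(3)\text{-empty}}]$). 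One checks this blow-up sequence is $d$-dense for a suitable $d(\cdot)$, essentially because $T_\ell$ itself sits inside every $T_m$ with $m\ge \ell$ (a point I would justify via the same ``first coordinate where the three sequences are not all equal'' mechanism used in the definition of ternary hypergraphs), and the density estimate of Lemma~\ref{lem:Tnisdense} / Proposition~\ref{prop:toptimal} transfers to the blow-up with only a constant loss.

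\textbf{Alternative, cleaner route.} Rather than blowing up $T_\ell$, it is simpler to take the sequence $(T_m)_{m\in\NN}$ itself, which by Proposition~\ref{prop:toptimal} is $d$-dense for $d(\eta)=\tfrac14\eta^{\rho}$ (a genuine function $(0,1]\to(0,1]$). Since $F$ is frequent, there is an $n_0$ such that $F\subseteq T_m$ for all $m\ge n_0$; in particular $F\subseteq T_{n_0}$ for some specific $n_0$ which we may take $\ge\ell$. So the real content of the lemma is the implication
\[
	F\subseteq T_m \text{ for some } m\ge\ell \quad\Longrightarrow\quad F\subseteq T_\ell\,,
\]
i.e.\ that once $F$ embeds into \emph{some} ternary hypergraph, it already embeds into the one on $3^\ell$ vertices. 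This is a purely combinatorial ``compression'' statement about ternary hypergraphs and has nothing to do with density; it is the assertion flagged just before Theorem~\ref{thm:ternary} in the excerpt.

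\textbf{Proving the compression step.} Suppose $\psi\colon V(F)\to\{0,1,2\}^m$ is an embedding of $F$ into $T_m$, and write $\psi(v)=(\psi_1(v),\dots,\psi_m(v))$. The idea is to delete ``useless'' coordinates one at a time until at most $\ell$ remain. Call coordinate $i\in[m]$ \emph{active} if there exist vertices $u,v,w\in V(F)$ for which $i$ is the least index with $\{\psi_i(u),\psi_i(v),\psi_i(w)\}\ne\{*\}$ a singleton — more precisely, the least index on which $\psi(u),\psi(v),\psi(w)$ are not all equal, and which therefore decides (non-)adjacency of $\{u,v,w\}$. Deleting an \emph{inactive} coordinate $i_0$ — i.e.\ projecting all $\psi(v)$ to $\{0,1,2\}^{m-1}$ by forgetting coordinate $i_0$ — does not change, for any triple $\{u,v,w\}$, the least index at which the three truncated sequences disagree (when such an index still exists) nor the values there; one has to check separately that no triple becomes ``all equal'' after deletion, which is exactly the inactivity hypothesis. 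Hence the projected map is still an embedding into $T_{m-1}$. Iterating, we reduce to the case where every coordinate is active. Finally, each active coordinate is ``witnessed'' by at least one pair of vertices $u\ne v$ with $\psi_i(u)\ne\psi_i(v)$ being the disagreement that makes $i$ minimal for some triple — but more simply, if two vertices $u,v$ have $\psi(u),\psi(v)$ agreeing on every active coordinate then $\psi(u)=\psi(v)$ (restricting to active coordinates is injective on $V(F)$, else $F$ would have a repeated vertex), so the number of active coordinates is at most $\ell-1\le\ell$; re-reading the surviving coordinates as an element of $\{0,1,2\}^\ell$ (padding with constant coordinates if fewer than $\ell$ survive) gives the desired embedding $F\hookrightarrow T_\ell$.

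\textbf{Main obstacle.} The delicate point is the bookkeeping in the compression step: verifying that projecting away an inactive coordinate genuinely preserves \emph{all} adjacencies and non-adjacencies of $F$ simultaneously, and in particular that it never turns a former edge or non-edge into an ill-defined triple by making three images coincide. This needs the right definition of ``active/inactive'' — it must refer to the \emph{least disagreement index} of triples, not merely to whether a coordinate is non-constant — and a short case analysis showing the least-disagreement index and its value are unaffected by the deletion. Everything else (the $d$-density input is supplied by Proposition~\ref{prop:toptimal}; injectivity of $\psi$ restricted to active coordinates) is routine, and the bound ``$\le\ell-1$ active coordinates'' is then immediate from a pigeonhole on $V(F)$.
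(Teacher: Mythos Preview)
Your overall architecture is right and matches the paper: first use Proposition~\ref{prop:toptimal} to get $F\subseteq T_m$ for some $m$, then prove a purely combinatorial ``compression'' step $F\subseteq T_m \Rightarrow F\subseteq T_\ell$. The difficulty is entirely in your compression argument, and there it has two genuine gaps.

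\emph{First gap: injectivity is lost.} Your notion of ``active'' is defined via triples, and you only verify that deleting an inactive coordinate keeps every \emph{triple} of images distinct. But an embedding must be injective on vertices, and pairs can collapse. Concretely, take $\ell=4$, embed $a\mapsto 000$, $b\mapsto 100$, $c\mapsto 200$, $d\mapsto 010$ into $T_3$. All four triples first disagree at coordinate~$1$, so coordinates $2$ and $3$ are inactive in your sense; deleting them sends both $a$ and $d$ to $0$, and the projected map is no longer an embedding. The sentence ``restricting to active coordinates is injective on $V(F)$'' is exactly the assertion that fails here.

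\emph{Second gap: the bound on the number of active coordinates is unjustified.} Even granting injectivity of the restricted map $V(F)\to\{0,1,2\}^k$, this yields only the lower bound $3^k\ge\ell$, not the desired upper bound $k\le\ell-1$. The implication you wrote is in the wrong direction.

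Both gaps are repairable, but the natural repair is essentially the paper's argument. If one replaces ``triple-active'' by ``pair-active'' (coordinate $i$ is the first disagreement of some \emph{pair}), then deleting pair-inactive coordinates preserves both injectivity and all triple first-disagreement data. The bound ``at most $\ell-1$ pair-active coordinates'' then follows by induction on $\ell$: at the least pair-active coordinate the vertices split into $g\ge 2$ nonempty groups of sizes $s_1,\dots,s_g$, and the pair-active coordinates beyond it are at most $\sum_j (s_j-1)=\ell-g\le\ell-2$. This induction is precisely the recursive structure the paper exploits directly: choose $n$ minimal with $F\subseteq T_n$, split $V(T_n)=V_1\dcup V_2\dcup V_3$ along the first coordinate, observe that each $V_i\cap V(F)$ has fewer than $\ell$ vertices by minimality, and apply the induction hypothesis to embed each piece into $T_{\ell-1}$; the three copies of $T_{\ell-1}$ inside $T_\ell$ then receive the three pieces.
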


\begin{proof}
It follows from Lemma~\ref{lem:Tnisdense} that there is some $n\in \NN$ with $F\subseteq T_n$.
Thus it suffices to prove that if $F\subseteq T_n$ and $v(F)=\ell$, then $F\subseteq T_\ell$ 
holds as well. We do so by induction on~$\ell$, the base case $\ell\le 3$ being clear. 

Now let any hypergraph $F$ appearing in some ternary hypergraph and with $\ell\ge 4$ 
vertices be given and choose $n\in \NN$ minimal with $F\subseteq T_n$. 
Take a partition $V(T_n)=V_1\dcup V_2\dcup V_3$ 
such that each of $V_1$, $V_2$, and $V_3$ induces a copy of $T_{n-1}$
and such that all further edges of $T_n$ are of the form $v_1v_2v_3$ with $v_i\in V_i$ 
for $i=1, 2, 3$.
By the minimality of $n$ each of the three sets $V_i\cap V(F)$ with $i=1, 2, 3$ contains 
less than $\ell$ vertices, so by the induction hypothesis they induce suphypergraphs
of $T_{n}$ that appear already in $T_{\ell-1}$. Therefore we have indeed 
$F\subseteq T_\ell$. 
\end{proof}

\subsection{The backward implication} For completeness we include a proof of the fact that 
subhypergraphs of ternary hypergraphs are indeed frequent. This proof follows the lines of the work in~\cite{FR88} and will be done by induction on the order of the 
hypergraph whose frequency we wish to establish. In order to carry the induction it
will help us to address the corresponding supersaturation assertion directly. Let us recall
to this end that a {\it homomorphism} from a hypergraph $F$ to another hypergraph $H$ is 
a map $\phi\colon V(F)\longrightarrow V(H)$ sending edges of $F$ to edges of $H$; explicitly,
this means that $\{\phi(x), \phi(y), \phi(z)\}\in E(H)$ is required to hold for every
triple $xyz\in E(F)$. The set of these homomorphisms is denoted by $\Hom(F, H)$
and $\hom(F, H)=|\Hom(F, H)|$ stands for the {\it number} of homomorphisms from $F$ to $H$.

\begin{prop}\label{prop:52}
Given a hypergraph $F$ which is a subhypergraph of some ternary hypergraph and a 
function $d\colon (0, 1)\to (0, 1)$, there are constants $\eta, \xi >0$ such that 
\[
	\hom(F, H)\ge \xi v(H)^{v(F)}
\]
is satisfied by every hypergraph $H$ with the property that $e(U)\ge d(\eps)|U|^3/6$ 
holds whenever $U\subseteq V(H)$, $\eps\in [\eta, 1]$, and $|U|\ge \eps\, |V(H)|$.  
\end{prop}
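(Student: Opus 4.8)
The plan is to prove the statement by induction on the number of vertices $v(F)$, following the supersaturation strategy of Frankl and R\"odl. If $v(F)\le 3$ the claim is immediate: either $F$ has no edge, and then every map $V(F)\to V(H)$ is a homomorphism so $\hom(F,H)=v(H)^{v(F)}$ and one may take $\xi=1$; or $F$ is a single edge, in which case a homomorphism is the same thing as an ordered edge, so $\hom(F,H)=6\,e(H)\ge d(1)\,v(H)^3$ by the hypothesis applied with $\eps=1$ and $U=V(H)$, and one takes $\xi=d(1)$, $\eta=1$.

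For the inductive step put $\ell=v(F)\ge 4$. Since $F$ embeds into some ternary hypergraph, the argument used to prove Lemma~\ref{lem:decidable} provides an integer $n$, minimal with $F\subseteq T_n$, together with a partition $V(T_n)=V_1\dcup V_2\dcup V_3$ in which each $V_i$ induces a copy of $T_{n-1}$ and every remaining edge of $T_n$ is a transversal triple $\{u_1,u_2,u_3\}$ with $u_i\in V_i$. Writing $A_i=V(F)\cap V_i$ and $F_i=F[A_i]$, minimality of $n$ forces $|A_i|<\ell$ for each $i$; each $F_i$ is a subhypergraph of $T_{n-1}$, hence again of a ternary hypergraph; and every edge of $F$ either lies inside one of the $A_i$ or is a transversal triple meeting all three of them. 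If some $A_i$ is empty then $F$ is a disjoint union of two hypergraphs on fewer than $\ell$ vertices and the claim follows at once by multiplying two instances of the induction hypothesis, so we may assume $A_1,A_2,A_3$ are nonempty and apply the induction hypothesis to each $F_i$ with the function $\eps\mapsto d(\eps/4)$, obtaining thresholds with maximum $\eta_1>0$ and constants $\xi_1,\xi_2,\xi_3>0$.

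Now let $H$ satisfy the stated density property for a small threshold $\eta$ to be fixed at the end, and put $N=v(H)$. One first passes to pairwise disjoint sets $W_1,W_2,W_3\subseteq V(H)$, each of size at least $N/4$ and each inducing a subhypergraph to which the induction hypothesis applies (with the function $\eps\mapsto d(\eps/4)$ and threshold $\eta_1$) --- possible once $\eta$ is small enough, by taking a sufficiently balanced tripartition, since a subset of $W_i$ of proportion $\ge\eta_1$ is a subset of $V(H)$ of proportion $\ge\eta_1/4\ge\eta$. The induction hypothesis then yields $\hom(F_i,H[W_i])\ge\xi_i|W_i|^{v(F_i)}\ge\xi_i(N/4)^{v(F_i)}$. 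The decisive step --- and the one I expect to be the main obstacle --- is to assemble one homomorphism from each $H[W_i]$ into a homomorphism of the whole of $F$: given $(\phi_1,\phi_2,\phi_3)$ with $\phi_i\in\Hom(F_i,H[W_i])$, all edges of $F$ inside the parts are automatically respected, and what remains is that each of the at most $\ell^3$ transversal triples $\{a_1,a_2,a_3\}\in E(F)$ maps to an edge of $H$. A mere positive lower bound on the density of transversal edges across $W_1,W_2,W_3$ does \emph{not} suffice, since a union bound over the transversal edges of $F$ would require that density to exceed $1-\ell^{-3}$, which a merely hereditarily dense host need not provide. Instead one exploits that the images $\phi_i(A_i)$ have bounded size: it is enough to arrange the transversal structure of $H$ to be \emph{complete} on such bounded sets, which one obtains by locating inside the $W_i$ a suitable complete tripartite configuration already carrying the nested ternary structure --- such configurations being abundant because complete multipartite $3$-graphs have vanishing Tur\'an density by Erd\H{o}s's theorem \cite{Er64}, while the hereditary density keeps the subhypergraphs one recurses into dense enough for the induction hypothesis. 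This is exactly the Frankl--R\"odl supersaturation argument, and it shows that a fixed positive fraction of the triples $(\phi_1,\phi_2,\phi_3)$ extend to homomorphisms of $F$.

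Finally, distinct such triples need not give distinct homomorphisms of $F$, since a homomorphism of $F$ uses only $\ell$ vertices whereas the auxiliary configuration may involve more; but the overcounting is bounded by a fixed power of $N$, so dividing it out leaves $\hom(F,H)\ge\xi\,N^{\ell}$ for some $\xi>0$ depending only on $F$ and $d$. One then chooses $\eta$ smaller than every threshold produced along the way --- there are only finitely many, since $v(F)$ strictly decreases at each level --- which completes the induction. The genuinely delicate point to isolate is the combining step: converting a host that is dense on all large vertex sets into one on which the relevant bounded transversal pattern is actually present.
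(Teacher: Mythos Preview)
Your base cases and the setup via the ternary tripartition are fine, but the proposal has a genuine gap at the combining step, and the remedy you sketch via Erd\H{o}s's theorem does not close it. Supersaturation for complete tripartite $3$-graphs yields copies of $K^{(3)}_{t,t,t}$ with $t$ \emph{bounded} (depending on the density and on $F$), not with $t$ linear in $N$. But for a part $B_i$ of such a configuration to inherit the hereditary density hypothesis --- which you need in order to recurse and find many homomorphisms of $F_i$ into $H[B_i]$ --- the set $B_i$ must have size at least $\eta N$ for the \emph{fixed} threshold $\eta$. These two requirements pull in opposite directions: you cannot simultaneously force all transversal triples between $B_1, B_2, B_3$ to be present and keep the $B_i$ of linear size. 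The reference to a ``nested ternary structure'' does not help; unwinding it amounts to locating a copy of some $T_m\supseteq F$ inside $H$, which is at least as hard as the original task. As written, the argument does not produce the claimed positive fraction of extendable triples $(\phi_1,\phi_2,\phi_3)$.

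The paper's proof (which is the Frankl--R\"odl argument) dissolves this tension with one auxiliary hypergraph rather than a tripartite split. Arrange that $|A_3|\ge 2$, set $F_{12}=F[A_1\cup A_2]$ and $F_3=F[A_3]$, and let $F_*$ be $F_{12}$ together with a single new vertex $z$ joined by \emph{all} triples $v_1 v_2 z$ with $v_1\in A_1$, $v_2\in A_2$. Then $F_*$ still embeds in $T_\ell$ (send $z$ to any point of $V_3$) and $v(F_*)=v(F_{12})+1<v(F)$, so induction gives $\hom(F_*,H)\ge\xi_* N^{v(F_*)}$. Averaging, a set $\Phi$ of at least $\tfrac12\xi_* N^{v(F_{12})}$ homomorphisms $\phi\in\Hom(F_{12},H)$ have
\[
	A_\phi=\bigl\{v\in V(H)\colon \phi(v_1)\phi(v_2)v\in E(H)\ \text{for all}\ v_1\in A_1,\, v_2\in A_2\bigr\}
\]
of size at least $\tfrac12\xi_* N$. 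The crucial point is that $A_\phi$ is \emph{automatically} of linear size, so $H[A_\phi]$ inherits the density hypothesis with the shifted function $\eps\mapsto d(\tfrac12\xi_*\eps)$, and induction on $F_3$ yields $\hom(F_3, H[A_\phi])\ge\xi_3|A_\phi|^{v(F_3)}$. Every such $\psi\colon F_3\to A_\phi$ glues with $\phi$ to a homomorphism of $F$, the transversal edges being handled by the very definition of $A_\phi$. Summing over $\phi\in\Phi$ gives $\hom(F,H)\ge\xi N^{v(F)}$ directly --- no disjoint host sets, no extremal step for complete tripartite graphs, and no overcounting correction are needed.
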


\begin{proof}
We argue by induction on $v(F)$. The base case $v(F)\le 2$ is clear, since then~$F$ cannot 
have any edge and $\eta=\xi=1$ works. For $v(F)=3$ we take $\eta=1$ as well as~$\xi=d(1)$.
As every edge of $H$ gives rise to six homomorphisms from $F$ to $H$ we get indeed
$\hom(F, H)\ge 6e(H)\ge d(1)v(H)^3$.

For the induction step let a hypergraph $F$ with $v(F)\ge 4$ and a function  
$d\colon (0, 1)\to (0, 1)$ be given. Let $\l\ge 2$ be minimal with~$F\subseteq T_\l$. 
For simplicity we will suppose that $F$ is in fact an induced subhypergraph of $T_\l$. 

Again we take a partition $V(T_\l)=V_1\dcup V_2\dcup V_3$ 
such that $V_i$ spans a copy of $T_{\l-1}$ for $i=1, 2, 3$ and all further edges 
of $T_\l$ are of the form $v_1v_2v_3$ with $v_i\in V_i$ for $i=1, 2, 3$. 
By symmetry we may suppose, after a possible renumbering of indices, that 
$|V(F)\cap V_3|\ge 2$ holds. Let $F_{12}$ and $F_3$ be the restrictions of $F$
to $V_1\cup V_2$ and $V_3$, respectively. Moreover, we will need the hypergraph
$F_*$ arising from $F$ by deleting all but one vertex from $V(F)\cap V_3$. An 
alternative and perhaps helpful description of $F_*$ is that it can be obtained from
$F_{12}$ by adding a new vertex $z$ and all triples $v_1v_2z$ with $v_1\in V(F)\cap V_1$
and $v_2\in V(F)\cap V_2$.   

Intuitively the reason why there should be many homomorphisms from $F$ into an 
$n$-vertex hypergraph $H$ satisfying some local density condition is the following. 
Due to $v(F_*)<v(F)$ we may assume by induction that $\hom(F_*, H)=\Omega(n^{v(F_*)})$.
This means that there is a collection of $\Omega(n^{v(F_{12})})$ homomorphisms $\phi$
from $F_{12}$ to $H$ that can be extended in $\Omega(n)$ many ways to a member of
$\Hom(F_{*}, H)$. For each such $\phi$ the set $A_\phi\subseteq V(H)$ consisting 
of the possible images of the new vertex $z$ in such an extension inherits a local
density condition, because its size is linear, and a further use of the induction hypothesis
shows that there are $\Omega(n^{v(F_3)})$ homomorphisms from $F_3$ to $A_\phi$. 
These homomorphisms can in turn be regarded as extensions of $\phi$ to members of 
$\Hom(F, H)$. This argument can be performed for any $\phi$ and thus we get 
$\Hom(F, H)\ge \Omega(n^{v(F_{12})})\cdot \Omega(n^{v(F_3)})=\Omega(n^{v(F)})$. 

Proceeding now to the details of this derivation let $\eta_*$ and $\xi_*$
denote the constants obtained by applying the induction hypothesis to~$F_*$ and~$d(\cdot)$.
The minimality of $\ell$ implies~$v(F_3)<v(F)$ and therefore we may apply the induction 
hypothesis to $F_3$ and the function ${d'\colon (0, 1)\to (0, 1)}$ defined 
by $\eps\longmapsto d(\eps\cdot\xi_*/2)$, thus obtaining
two further constants~$\eta_3$ and~$\xi_3$. We contend that 
\[
	\eta=\min\bigl(\eta_*, \tfrac 12\xi_*\eta_3\bigr)
	\quad \text{ and } \quad 
	\xi=\frac{\xi_*^{v(F_3)+1}\xi_3}{2^{v(F_3)+1}} 
\]
have the requested properties. 

Now let any hypergraph $H$ with $e(U)\ge d(\eps)|U|^3/6$ for all $\eps\in [\eta, 1]$
$U\subseteq V(H)$  with $|U|\ge \eps\, |V(H)|$ 
be given and put $n=v(H)$. Due to $\eta_*\ge \eta$ we have 
\begin{equation}\label{eq:homF-}
	\hom(F_*, H)\ge \xi_*n^{v(F_*)}\,.
\end{equation}
For every homomorphism $\phi\in \Hom(F_{12}, H)$ we consider the set
\[
	A_\phi=\bigl\{v\in V(H)\colon \phi\cup\{(z, v)\}\in \Hom(F_*, H)\bigr\}
\]
of vertices that can be used for extending $\phi$ to a homomorphism $\phi\cup\{(z, v)\}$ from 
$F_*$ to~$H$. It will be convenient to identify these sets with the subhypergraphs 
of $H$ they induce. Finally we define
\[
	\Phi=\bigl\{\phi\in\Hom(F_{12}, H)\colon |A_\phi|\ge \tfrac 12\xi_*n\bigr\}
\]
to be the set of those homomorphisms from $F_{12}$ to $H$ that admit a substantial number
of such extensions.

Since $v(F_*)=v(F_{12})+1$ we obtain from~\eqref{eq:homF-} 
\[
	\xi_*n^{v(F_{12})+1}\le \sum_{\phi\in\Hom(F_{12}, H)}|A_\phi|
	\le |\Phi|\cdot n + n^{v(F_{12})}\cdot \tfrac 12\xi_*n\,,
\]
whence
\begin{equation}\label{eq:Omega}
	|\Phi| \ge \tfrac 12\xi_* n^{v(F_{12})}\,.
\end{equation}

Moreover it is clear that 
\begin{equation}\label{eq:homF}
	\hom(F, H)=\sum_{\phi\in \Hom(F_{12}, H)}\hom(F_3, A_\phi)
\end{equation}
and the next thing we show is that for every $\phi\in\Phi$ we have
\begin{equation}\label{eq:homF3}
	\hom(F_3, A_\phi)\ge \xi_3\bigl(\tfrac12 \xi_* n\bigr)^{v(F_3)}\,.
\end{equation}
Owing to our inductive choice of $\eta_3$ and $\xi_3$ it suffices 
for the verification of this estimate to show that if $\eps\in[\eta_3, 1]$,
$U\subseteq A_\phi$, and $|U|\ge \eps |A_\phi|$, then $e(U)\ge d'(\eps)|U|^3/6$.
But since $\phi\in \Phi$ leads to $|U|\ge \frac 12 \eps\xi_* n$, 
this follows immediately from $\frac 12 \eps\xi_*\ge \frac 12 \xi_*\eta_3\ge\eta$, 
the definition of~$d'$, and from our choice of $H$. 

Taken together~\eqref{eq:homF},~\eqref{eq:homF3}, and~\eqref{eq:Omega} yield
\[
	\hom(F, H)\ge\sum_{\phi\in \Phi}\hom(F_3, A_\phi)	
	\ge |\Phi| \cdot  \xi_3\bigl(\tfrac12 \xi_* n\bigr)^{v(F_3)}
	\ge \frac{\xi_*^{v(F_3)+1}\xi_3}{2^{v(F_3)+1}} n^{v(F)}\,,
\]
as desired.
\end{proof}

Proposition~\ref{prop:52} implies that all subhypergraphs
of ternary hypergraphs are frequent and combined with Lemma~\ref{lem:decidable}
this shows that 
being frequent is a decidable property.

\section{Concluding remarks} 
\label{sec:conc_rem} 
\subsection{Hypergraphs with uniformly positive density}    
In~\cite{RRS-e}*{Section~2} we defined for a given antichain $\ccA\subseteq \powerset([k])$
and given real numbers $d\in [0, 1]$, $\eta>0$ the concept of a~$k$-uniform hypergraph
being $(d, \eta, \ccA)$-dense. An obvious modification of~\eqref{eq:pi3dot} does then lead 
to corresponding {\it generalised Tur\'{a}n densities} $\pi_{\ccA}(F)$ of $k$-uniform 
hypergraphs~$F$. Now the question presents itself to determine $\pi_{\ccA}(F)$ for all 
antichains $\ccA$ and all hypergraphs~$F$. At the moment this appears to be
a hopelessly difficult task, as it includes, among many further variations, the original 
version of Tur\'{a}n's problem to determine the ordinary Tur\'{a}n density $\pi(F)$ of any 
hypergraph $F$. 

For the time being 
it might be more reasonable to focus on the case $\ccA=[k]^{(k-2)}$ (or stronger density assumptions),
as it might be that for this case one can establish a theory that 
resembles to some extent the classical theory for graphs initiated by Tur\'{a}n himself
and developed further by Erd\H{o}s, Stone, and Simonovits and many others. 

Another possible direction is to characterise for given $\ccA$ the hypergraphs~$F$ 
with ${\pi_{\ccA}(F)=0}$
and here it seems natural to pay particular attention to the \emph{symmetric case}, when $\ccA=[k]^{(j)}$
contains all $j$-element subsets of $[k]$.
Let us now describe an extension of Thereom~\ref{zero} to this setting. 
First of all, a $k$-uniform hypergraph $H=(V, E)$ is said to be {\it $(d, \eta, j)$-dense},
for real numbers $d\in [0, 1]$,~$\eta>0$, and $j\in[k-1]$, if for every $j$-uniform
hypergraph $G$ on~$V$ the collection $\cK_k(G)$ of all $k$-subsets of~$V$ inducing a 
clique~$K^{(j)}_k$ in $G$ obeys the estimate 
\[
	\big|E\cap \cK_k(G)\big|\ge d\,\big|\cK_k(G)\big|-\eta\,|V|^k\,.
\]
One then defines for every $k$-uniform hypergraph $F$ 
\begin{multline*}\label{eq:pi3dot}
	\pi_{j}(F)=\sup\bigl\{d\in[0,1]\colon \text{for every $\eta>0$ and $n\in \NN$ there exists an $F$-free,}\\
	\text{$(d,\eta, j)$-dense, $k$-uniform hypergraph $H$ with $|V(H)|\geq n$}\bigr\}
\end{multline*}
and \cite{RRS-e}*{Proposition 2.5} shows that 
these densities $\pi_{j}(\cdot)$ agree with the densities $\pi_{[k]^{(j)}}(\cdot)$ 
alluded to in the first paragraph of this subsection.

For $j=k-1$ it is known that every $k$-uniform hypergraph $F$ satisfies $\pi_{k-1}(F)=0$, which follows for example 
from the work in~\cite{KRS02}. Thereom~\ref{zero} address the case $j=k-2$ for $k=3$ and for general~$k$ we obtain 
the following characterisation.
\begin{thm}\label{thm:zero-k}
For a $k$-uniform hypergraph $F$, the following are equivalent:
\begin{enumerate}[label=\alabel]
	\item $\pi_{k-2}(F)=0$.
	\item\label{it:zero-k:b} There are an enumeration of the vertex set $V(F)=\{v_1, \dots, v_f\}$ 
		and a $k$-colouring $\phi\colon \partial F\to [k]$ of the $(k-1)$-sets 
		of vertices covered by hyperedges of~$F$ 
		such that every hyperedge $e=\{v_{i(1)},\dots,v_{i(k)}\}\in E(F)$ with 
		$i(1)<\dots <i(k)$ satisfies 
		\begin{equation}\label{eq:phi}
			\phi(e\sm \{v_{i(\ell)}\})=\ell
			\quad \text{ for every } \ell\in [k]\,.
		\end{equation}
\end{enumerate}
\end{thm}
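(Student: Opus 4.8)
I would prove Theorem~\ref{thm:zero-k} by imitating the proof of Theorem~\ref{zero}, upgrading each ingredient to its $k$-uniform counterpart; the implication from $\pi_{k-2}(F)=0$ to~\ref{it:zero-k:b} is again the easy one and comes from an explicit construction, while the converse runs through the hypergraph regularity method and a reduced-hypergraph lemma.

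For ``$\pi_{k-2}(F)=0\Longrightarrow\ref{it:zero-k:b}$'' I would generalise Fact~\ref{fact:jump}. Given $n$ and a $k$-colouring $\phi\colon[n]^{(k-1)}\to[k]$ of the $(k-1)$-subsets of $[n]$, let $H_\phi$ be the $k$-uniform hypergraph on $[n]$ in which $\{i(1),\dots,i(k)\}$ with $i(1)<\dots<i(k)$ is an edge exactly when $\phi\bigl(\{i(1),\dots,i(k)\}\sm\{i(\ell)\}\bigr)=\ell$ for all $\ell\in[k]$. A uniformly random $k$-set is then an edge with probability $k^{-k}$, and changing a single colour alters the number of edges of $H_\phi$ lying in any $\cK_k(G)$ by at most~$n$; hence a routine martingale estimate together with a union bound over all $(k-2)$-uniform hypergraphs $G$ on $[n]$ shows that, with probability tending to~$1$, a random $H_\phi$ is $(k^{-k},\eta,k-2)$-dense. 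On the other hand, if $F$ fails~\ref{it:zero-k:b} then $F$ embeds into no $H_\phi$, since an embedding $F\hookrightarrow H_\phi$ would produce an enumeration of $V(F)$ (inherited from $[n]$) and a colouring of $\partial F$ (the restriction of $\phi$) witnessing~\ref{it:zero-k:b}. This proves $\pi_{k-2}(F)\ge k^{-k}$ whenever~\ref{it:zero-k:b} fails, which yields the desired implication as well as a $k$-uniform analogue of Corollary~\ref{jump}.

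For the converse I would run the argument of Sections~\ref{sec:regular}--\ref{sec:proof} in $k$-uniform form. The hypergraph regularity lemma and a counting/embedding lemma ($k$-uniform analogues of Theorems~\ref{thm:TuRL} and~\ref{thm:EL}) reduce the task of embedding an $F$ satisfying~\ref{it:zero-k:b} into a large $(\eps,\eta,k-2)$-dense hypergraph to a statement about \emph{reduced hypergraphs} of the following shape: the index set is some $[m]$; there is a nonempty vertex class $\cP^S$ for each $(k-1)$-subset $S\subseteq[m]$; and for each $k$-subset $T\subseteq[m]$ there is a constituent $\cA^T$, a $k$-partite $k$-uniform hypergraph on the classes $\{\cP^{T\sm\{t\}}\colon t\in T\}$, with $\mu$-denseness meaning $|E(\cA^T)|\ge\mu\prod_{t\in T}|\cP^{T\sm\{t\}}|$ for every $T$. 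As in the deduction of Theorem~\ref{zero} from Lemma~\ref{char-modif} -- applying the density hypothesis to a $(k-2)$-uniform hypergraph whose $k$-cliques are exactly the transversals of a $k$-tuple of partition classes, and then invoking the counting lemma -- the reduced hypergraph obtained by regularising an $(\eps,\eta,k-2)$-dense $H$ is $\mu$-dense for some $\mu=\mu(\eps)>0$. Everything then rests on the following $k$-uniform analogue of Lemma~\ref{char-modif}: for all $\mu>0$ and $f\in\NN$ there is an $m$ so that every $\mu$-dense reduced hypergraph on $[m]$ contains indices $\lambda(1)<\dots<\lambda(f)$ and, for each $(k-1)$-subset $\{i_1<\dots<i_{k-1}\}\subseteq[f]$ and each colour $c\in[k]$, a vertex $P^{\{\lambda(i_1),\dots,\lambda(i_{k-1})\}}_c\in\cP^{\{\lambda(i_1),\dots,\lambda(i_{k-1})\}}$, such that for every $k$-subset $\{i_1<\dots<i_k\}\subseteq[f]$ the tuple $\bigl(P^{\{\lambda(i_1),\dots,\lambda(i_k)\}\sm\{\lambda(i_\ell)\}}_\ell\bigr)_{\ell\in[k]}$ is an edge of $\cA^{\{\lambda(i_1),\dots,\lambda(i_k)\}}$.

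This last statement I would prove by an iterated selection in $k$ stages, with $m\gg m_1\gg\dots\gg m_{k-1}\gg\max(f,\mu^{-1})$, stage~$i$ fixing the vertices of colour $c=k+1-i$. In the stage treating colour $c$, one must choose $P^S_c$ on every surviving $(k-1)$-set $S=\{s_1<\dots<s_{k-1}\}$ so that, for every way of extending $S$ by a surviving index $t$ to a $k$-set $T=S\cup\{t\}$ in which $t$ occupies position~$c$, the set $T$ satisfies the codegree condition relating $P^S_c$ to the $k-c$ vertices of colours $c+1,\dots,k$ already fixed -- all of which, by the chosen order of the stages, live on $(k-1)$-subsets of $T$ containing~$t$; a routine averaging shows that the admissible choices of $P^S_c$ form a subset of $\cP^S$ whose density is bounded below in terms of $\mu$ alone, after which one shrinks the index set to pass to a common choice over all these~$t$. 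For $c=k$ the inserted index is the largest, and this is precisely the selection carried out by Lemma~\ref{mh-ac} and Corollary~\ref{choose-a}; for $c=1$ it is the smallest, giving the index-reversed version as in Corollary~\ref{choose-c}; for the $k-2$ interior colours $1<c<k$ one needs selection lemmas in the spirit of Lemma~\ref{mh-b}, but over $(k-1)$-element ground sets and with an interior varying coordinate, which are themselves obtained by an inner induction generalising the chain Corollary~\ref{2-indices}$\to$Lemma~\ref{mh-b}. After stage~$k$ the surviving index set has size $f$; enumerating it increasingly as $\lambda(1)<\dots<\lambda(f)$ finishes the proof. The main obstacle -- indeed the only genuine novelty beyond $k=3$ -- is exactly this combinatorial bookkeeping for the interior colours: setting up the multi-parameter inductions (analogues of the $(m,h)$-formulations of Lemmas~\ref{mh-ac} and~\ref{mh-b}) so that the chain of constant dependencies survives all $k$ stages.
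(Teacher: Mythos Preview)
Your proposal is correct and follows precisely the route the paper indicates. The paper does not give a detailed proof of Theorem~\ref{thm:zero-k}; it only remarks that it ``can be established in the same way as Theorem~\ref{zero}, but using the hypergraph regularity lemma for $k$-uniform hypergraphs,'' pointing to~\cite{RRS-e} for the relevant reduced-hypergraph formalism. Your outline fleshes this out accurately: the random $k$-colouring construction generalising Fact~\ref{fact:jump} (with the martingale/union-bound over $(k-2)$-uniform $G$ replacing the simpler $j=1$ argument), the passage through $k$-uniform regularity to a $\mu$-dense reduced hypergraph with vertex classes on $(k-1)$-sets and constituents on $k$-sets, and a $k$-stage selection proving the analogue of Lemma~\ref{char-modif} --- with the first and last stages handled by Corollaries~\ref{choose-a} and~\ref{choose-c} and the $k-2$ intermediate stages by higher analogues of Lemma~\ref{mh-b}/Corollary~\ref{2-indices}. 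Your identification of the interior-colour selection lemmas as the only place requiring genuinely new bookkeeping is exactly right.
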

This can be established in the same way as Theorem~\ref{zero}, but using the hypergraph regularity lemma
for $k$-uni\-form hypergraphs. For the corresponding notion of reduced
hypergraphs we refer to~\cite{RRS-e}*{Definition~4.1} and for guidance on the 
reduction corresponding to Section~\ref{sec:reduce} above we refer to the part of the
proof of~\cite{RRS-e}*{Proposition~4.5} presented in Section~4 of that article.

For $j\in[k-3]$ we believe Theorem~\ref{thm:zero-k} extends in the natural way, where the 
$k$-colouring~$\phi$ in part~\ref{it:zero-k:b} is replaced by a $\binom{k}{j+1}$-colouring 
of the $(j+1)$-sets covered by an edge of~$F$ and condition~\eqref{eq:phi} is replaced
by a statement to the effect that the edges of $F$ are rainbow and mutually order-isomorphic 
when one takes these colours into account.   

For $j=0$ such a characterisation leads to
$k$-partite $k$-uniform hypergraphs~$F$ and, hence, such a result renders a common generalisation 
of Erd\H os' result from~\cite{Er64} and Theorem~\ref{thm:zero-k} and
we shall return to this in the near future.

Despite this progress the problem to describe for an arbitrary (asymmetric) antichain $\ccA\subseteq \powerset([k])$
the \hbox{$k$-uniform} class $\{F\colon \pi_{\ccA}(F)=0\}$ remains challenging.
In the $3$-uniform case the investigation of
$\{F\colon \pi_{\ev}(F)=0\}$ and $\{F\colon \pi_{\ee}(F)=0\}$, where $\ev=\{1, 23\}$
and $\ee=\{12, 13\}$, shows that algebraic structures enter the picture and this is 
currently work in progress of the authors.

We close this section with the following questions that compares $\pivvv(F)=\pi_1(F)$ with~$\pi(F)$ for $3$-uniform hypergraphs.
\begin{question}\label{q:pivvvpi}
	Is $\pi_1(F)<\pi(F)$ for every $3$-uniform hypergraph $F$ with $\pi(F)>0$\,? 
\end{question}
Roughly speaking, this questions has an affirmative answer, if no $3$-uniform hypergraph~$F$ with positive Tur\'an density 
has an extremal hypergraph~$H$ that is uniformly dense with respect to large vertex sets $U\subseteq V(H)$
(see also~\cite{ErSo82}*{Problem~7} for a related assertion).
In light of the fact, that all known extremal constructions for such $3$-uniform hypergraphs~$F$ are 
obtained from blow-ups or iterated blow-ups 
of smaller hypergraphs, which fail to be $(d,\eta,1)$-dense for all $d>0$ and sufficiently small $\eta>0$,
the answer to Question~\ref{q:pivvvpi} might be affirmative. Recalling that $\pi(F)=\pi_0(F)$ may suggest 
many generalisations of Question~\ref{q:pivvvpi} to $k$-uniform hypergraphs~$F$ of the form: 
\emph{For which~$F$ do we have $\pi_j(F)<\pi_i(F)$ for $0\leq i<j<k$?} 
At this point this is only known for $i=0$ and $j=k-1$ and Question~\ref{q:pivvvpi} 
is the first interesting open case.

\subsection{Hypergraphs with uniformly vanishing density}
Definition~\ref{dfn:d-dense} admits a straigthforward generalisation to $k$-uniform
hypergraphs: one just replaces all occurrences of the word ``hypergraph'' by 
``$k$-uniform hypergraph'' and all occurrences of the number $3$ by~$k$.

The sequence of ternary hypergraphs generalises to a sequence 
$(T^{(k)}_n)_{n\in \NN}$ of \hbox{$k$-uniform} hypergraphs that might be called {\it $k$-ary} 
and are defined as follows. The vertex set of~$T^{(k)}_n$ is~$[k]^n$ and given
$k$ vertices $\seq{x}_1, \dots, \seq{x}_k$, say with coordinates
$\seq{x}_i=(x_{i1}, \dots, x_{in})$ for $i\in [k]$ one looks at the least number
$m\in [n]$ for which $x_{1m}=\dots=x_{km}$ fails and declares 
$\{\seq{x}_1, \dots, \seq{x}_k\}$ to be an edge of $T^{(k)}_n$ if and only if  
$\{x_{1m}, \dots, x_{km}\}=[k]$ holds. The proof of Theorem~\ref{thm:ternary} (and of Lemma~\ref{lem:decidable})
generalises in the following way (see~\cite{FR88}).

\begin{thm}\label{thm:kvanish}
	A $k$-uniform hypergraph $F$ on $\l$ vertices is frequent if, and only if it is a subhypergraph of
	the $k$-ary hypergraph $T^{(k)}_\l$ on $k^\l$ vertices.\qed
\end{thm}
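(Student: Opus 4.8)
The plan is to reproduce, for general $k$, the three ingredients used for $k=3$: that $(T^{(k)}_n)_{n\in\NN}$ is itself $d$-dense for some $d$, the pruning statement that $F\subseteq T^{(k)}_n$ with $v(F)=\l$ forces $F\subseteq T^{(k)}_\l$, and the supersaturation estimate behind the backward implication. The structural fact underlying all three is that $V(T^{(k)}_n)$ partitions as $V(T^{(k)}_n)=V_1\dcup\dots\dcup V_k$, with $V_i$ the set of vertices whose first coordinate equals $i$, in such a way that each $V_i$ induces a copy of $T^{(k)}_{n-1}$ and the remaining edges of $T^{(k)}_n$ are \emph{precisely} the rainbow transversals $\{v_1,\dots,v_k\}$ with $v_i\in V_i$ for all $i\in[k]$. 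In particular $e(X)=\sum_{i=1}^k e(X\cap V_i)+\prod_{i=1}^k|X\cap V_i|$ holds for every $X\subseteq V(T^{(k)}_n)$.

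For the forward implication I would first establish the $k$-uniform analogue of Lemma~\ref{lem:Tnisdense}, namely that $(T^{(k)}_n)_{n\in\NN}$ is $d$-dense for some $d(\eta)=c_k\eta^{\rho_k}$. By the displayed recursion this reduces, exactly as in the deduction of Lemma~\ref{lem:Tnisdense} from Fact~\ref{fact:7}, to an inequality of the form
\[
	\sum_{i=1}^k x_i^{\tau}+C_k\prod_{i=1}^k x_i\ \geq\ k^{k-\tau}\Bigl(\sum_{i=1}^k x_i\Bigr)^{\tau}\qquad\text{for all }x_1,\dots,x_k\in[0,1]
\]
with $\tau=\rho_k+k$ and a suitable constant $C_k=k!/c_k$; since we do not need the optimal exponent here, any $\tau$ for which this holds will do, and such a $\tau$ can be found by the same Lagrange-multiplier and boundary analysis as in the proof of Fact~\ref{fact:7} (or imported from~\cite{FR88}). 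Granting this, a frequent $F$ occurs in $T^{(k)}_n$ for some $n$, and the pruning step --- the analogue of Lemma~\ref{lem:decidable} --- finishes: arguing by induction on $\l=v(F)$ and choosing $n$ minimal with $F\subseteq T^{(k)}_n$, minimality forces $|V(F)\cap V_i|<\l$ for each $i$, so each induced piece embeds into $T^{(k)}_{\l-1}$ by the induction hypothesis; placing the $k$ pieces into the $k$ parts of $T^{(k)}_\l$ and using that the cross edges of $F$ are rainbow transversals --- hence edges of $T^{(k)}_\l$ --- yields $F\subseteq T^{(k)}_\l$.

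For the backward implication I would prove the $k$-uniform analogue of the supersaturation Proposition~\ref{prop:52} by induction on $v(F)$, which (as in the $k=3$ case) shows that all subhypergraphs of $k$-ary hypergraphs are frequent. Given $F\subseteq T^{(k)}_\l$, taken induced, with $\l\geq 2$ minimal, fix the partition and assume after renumbering that $|V(F)\cap V_k|\geq 2$. Let $F_{\mathrm{rest}}$ and $F_k$ be the restrictions of $F$ to $V_1\cup\dots\cup V_{k-1}$ and to $V_k$, and let $F_*$ arise from $F$ by deleting all but one vertex $z$ of $V(F)\cap V_k$; then $v(F_k)<v(F)$ (by minimality of $\l$) and $v(F_*)<v(F)$ (since $|V(F)\cap V_k|\geq 2$), and both $F_k$ and $F_*$ are again subhypergraphs of $k$-ary hypergraphs, so the induction hypothesis applies to them. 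The crucial point --- literally the $k$-uniform version of the step in Proposition~\ref{prop:52} --- is that, $F$ being induced and every cross transversal of $T^{(k)}_\l$ being an edge, all vertices of $V(F)\cap V_k$ have the same link into $V_1\cup\dots\cup V_{k-1}$; hence for $\phi\in\Hom(F_{\mathrm{rest}},H)$ the set $A_\phi=\{v:\phi\cup\{(z,v)\}\in\Hom(F_*,H)\}$ is independent of which vertex of $V(F)\cap V_k$ is chosen as $z$, and $\hom(F,H)=\sum_{\phi\in\Hom(F_{\mathrm{rest}},H)}\hom(F_k,A_\phi)$, where $A_\phi$ is identified with the subhypergraph of $H$ it induces. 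From $\hom(F_*,H)\geq\xi_*\,v(H)^{v(F_*)}$, and $v(F_*)=v(F_{\mathrm{rest}})+1$, one extracts $\Omega\bigl(v(H)^{v(F_{\mathrm{rest}})}\bigr)$ many $\phi$ with $|A_\phi|=\Omega(v(H))$; each such $A_\phi$ inherits the local density hypothesis with $\eps$ rescaled by this linear factor, so the induction hypothesis applied to $F_k$ and the correspondingly rescaled density function gives $\hom(F_k,A_\phi)=\Omega\bigl(v(H)^{v(F_k)}\bigr)$, and multiplying out gives $\hom(F,H)=\Omega\bigl(v(H)^{v(F)}\bigr)$, which in particular yields an injective homomorphism --- a copy of $F$ --- once $v(H)$ is large.

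The constant-bookkeeping in the last paragraph is word for word the same as in Proposition~\ref{prop:52}, and the pruning argument is word for word that of Lemma~\ref{lem:decidable}. The one genuinely new ingredient is the displayed inequality for general $k$, and I expect that to be the main --- though rather mild --- obstacle; if one does not insist on the best possible exponent $\rho_k$, a crude version suffices and is already contained in~\cite{FR88}.
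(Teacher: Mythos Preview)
Your proposal is correct and matches the paper's approach exactly: the paper does not give a detailed proof of this theorem but simply asserts that the proof of Theorem~\ref{thm:ternary} and of Lemma~\ref{lem:decidable} generalises (referring to~\cite{FR88}), and what you have written is precisely that generalisation, carried out along the lines of Section~\ref{sec:Erdoes}. Your identification of the inequality replacing Fact~\ref{fact:7} as the only new ingredient, and your remark that a non-optimal exponent already available from~\cite{FR88} suffices, are both accurate.
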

 
Some further questions concerning frequent hypergraphs arise naturally 
 and below we discuss a few of them.
 
In the context of $3$-uniform hypergraphs one may use three sets instead of one set 
in the definition of $d$-dense (see Definition~\ref{dfn:d-dense}\,\ref{it:d-dense}) 
and this leads to a question that is somewhat different
from the one answered by Theorem~\ref{thm:ternary}. This happens 
because the -- perhaps on first sight expected -- analogue of~\eqref{eq:3vs1} 
does not hold. More explicitly, we say that a sequence $\seq{H}=(H_n)_{n\in\NN}$ of 
$3$-uniform hypergraphs with $v(H_n)\to\infty$ as $n\to\infty$ is 
{\it $(d, \vvv)$-dense} for a function $d\colon(0, 1)\to(0, 1)$
provided that for every $\eta>0$ there is some $n_0\in\NN$ such that
for every $n\ge n_0$ and all choices of $X, Y, Z\subseteq V(H_n)$ 
with $|X||Y||Z|\ge\eta |V(H_n)|^3$ there are at least 
$d(\eta)|X||Y||Z|$ ordered triples $(x, y, z)\in X\times Y\times Z$ 
with $xyz\in E(H_n)$. Besides, a $3$-uniform hypergraph~$F$ is called 
{\it $\vvv$\,-frequent} if for every function~$d\colon(0, 1)\to(0, 1)$ and 
every $(d, \vvv)$-dense sequence $\seq{H}=(H_n)_{n\in\NN}$ of $3$-uniform 
hypergraphs there exists an $n_0\in\NN$ with $F\subseteq H_n$ for every $n\ge n_0$.
			
The relation of this concept to being $d$-dense is as follows: If a 
sequence $\seq{H}$ of $3$-uniform hypergraphs is $(d, \vvv)$-dense,
then, by looking only at the case $X=Y=Z$ in the definition above, one sees 
that $\seq{H}$ is also $d$-dense. On the other hand, being $d$-dense does 
not even imply being $(d', \vvv)$-dense for any function $d'$. As an example
we mention that the sequence of ternary hypergraphs fails to be 
$(d, \vvv)$-dense for every $d\colon(0, 1)\to(0, 1)$.
			
As a corollary of Theorem~\ref{thm:ternary} subhypergraphs of 
ternary hypergraphs are $\vvv$\,-frequent, but the converse implication may not hold.
This leads to the following intriguing problem. 
\begin{problem}
	Characterise $\vvv$\,-frequent $3$-uniform hypergraphs.
\end{problem}

Similar to studying $\pi_j(\cdot)$ for $k$-uniform hypergraphs for every $j<k$ one 
may study dense sequences with respect to different  uniformities. More precisely, for a given integer 
$j\in [k-1]$ and a function~$d\colon(0, 1)\to(0, 1)$
we say that a sequence $\seq{H}=(H_n)_{n\in\NN}$ of $k$-uniform hypergraphs
with $v(H_n)\to\infty$ as $n\to\infty$ is {\it $(d, j)$-dense} if for every 
$\eta>0$ there is an $n_0\in\NN$ such that for every $n\ge n_0$ and 
every $j$-uniform hypergraph $G$ on $V(H_n)$ with $|\cK_k(G)|\ge \eta |V(H_n)|^k$
the estimate 
\[
	\big|E(H_n)\cap \cK_k(G)\big|\ge d(\eta)|\cK_k(G)|
\]
holds. Moreover, a $k$-uniform hypergraph $F$ is defined to be {\it $j$-frequent}
if for every function~$d\colon(0, 1)\to(0, 1)$ and every $(d, j)$-dense sequence
$\seq{H}=(H_n)_{n\in\NN}$ of $k$-uniform hypergraphs there exists an $n_0\in\NN$
with $F\subseteq H_n$ for every $n\ge n_0$. In particular, $1$-frequent is the 
same as frequent in the sense of Theorem~\ref{thm:kvanish}.

Similar as discussed above the $k$-ary hypergraphs show that there is a subtle 
difference between $(d,1)$-dense sequences and $(d,[k]^{(1)})$-dense sequences (where 
we take $k$ sets instead of one set). However, for $j\geq 2$ one can follow the argument 
presented in the proof of~\cite{RRS-e}*{Proposition~2.5} to show that a $k$-uniform 
hypergraph $F$ is $j$-frequent
if and only if it is $[k]^{(j)}$-frequent (defined in the obvious way).
As a result one can show that every $k$-uniform 
hypergraph $F$ is $(k-1)$-frequent by following the inductive proof on the number of edges of the 
counting lemma for hypergraphs. This leaves open 
to characterise the $j$-frequent hypergraphs for $j\in [2, k-2]$. 

Finally, we mention that one may also consider $(d,\ccA)$-dense sequences of 
hypergraphs for asymmetric antichains $\ccA$ and characterising $\ccA$-frequent 
hypergraphs is widely open. 
\begin{bibdiv}
\begin{biblist}

\bib{Er64}{article}{
   author={Erd{\H{o}}s, P.},
   title={On extremal problems of graphs and generalized graphs},
   journal={Israel J. Math.},
   volume={2},
   date={1964},
   pages={183--190},
   issn={0021-2172},
   review={\MR{0183654 (32 \#1134)}},
}

\bib{Er90}{article}{
   author={Erd{\H{o}}s, Paul},
   title={Problems and results on graphs and hypergraphs: similarities and
   differences},
   conference={
      title={Mathematics of Ramsey theory},
   },
   book={
      series={Algorithms Combin.},
      volume={5},
      publisher={Springer, Berlin},
   },
   date={1990},
   pages={12--28},
   review={\MR{1083590}},
}

\bib{ErSo82}{article}{
   author={Erd{\H{o}}s, P.},
   author={S{\'o}s, Vera T.},
   title={On Ramsey-Tur\'an type theorems for hypergraphs},
   journal={Combinatorica},
   volume={2},
   date={1982},
   number={3},
   pages={289--295},
   issn={0209-9683},
   review={\MR{698654 (85d:05185)}},
   doi={10.1007/BF02579235},
}

\bib{FR88}{article}{
   author={Frankl, P.},
   author={R\"odl, V.},
   title={Some Ramsey-Tur\'an type results for hypergraphs},
   journal={Combinatorica},
   volume={8},
   date={1988},
   number={4},
   pages={323--332},
   issn={0209-9683},
   review={\MR{981890}},
   doi={10.1007/BF02189089},
}

\bib{FR}{article}{
   author={Frankl, Peter},
   author={R{\"o}dl, Vojt{\v{e}}ch},
   title={Extremal problems on set systems},
   journal={Random Structures Algorithms},
   volume={20},
   date={2002},
   number={2},
   pages={131--164},
   issn={1042-9832},
   review={\MR{1884430 (2002m:05192)}},
   doi={10.1002/rsa.10017.abs},
}

\bib{KRS02}{article}{
   author={Kohayakawa, Yoshiharu},
   author={R{\"o}dl, Vojt{\v{e}}ch},
   author={Skokan, Jozef},
   title={Hypergraphs, quasi-randomness, and conditions for regularity},
   journal={J. Combin. Theory Ser. A},
   volume={97},
   date={2002},
   number={2},
   pages={307--352},
   issn={0097-3165},
   review={\MR{1883869 (2003b:05112)}},
   doi={10.1006/jcta.2001.3217},
}

\bib{Gow06}{article}{
   author={Gowers, W. T.},
   title={Quasirandomness, counting and regularity for 3-uniform
   hypergraphs},
   journal={Combin. Probab. Comput.},
   volume={15},
   date={2006},
   number={1-2},
   pages={143--184},
   issn={0963-5483},
   review={\MR{2195580 (2008b:05175)}},
   doi={10.1017/S0963548305007236},
}

\bib{KNRS}{article}{
   author={Kohayakawa, Yoshiharu},
   author={Nagle, Brendan},
   author={R{\"o}dl, Vojt{\v{e}}ch},
   author={Schacht, Mathias},
   title={Weak hypergraph regularity and linear hypergraphs},
   journal={J. Combin. Theory Ser. B},
   volume={100},
   date={2010},
   number={2},
   pages={151--160},
   issn={0095-8956},
   review={\MR{2595699 (2011g:05215)}},
   doi={10.1016/j.jctb.2009.05.005},
}

\bib{NPRS09}{article}{
   author={Nagle, Brendan},
   author={Poerschke, Annika},
   author={R{\"o}dl, Vojt{\v{e}}ch},
   author={Schacht, Mathias},
   title={Hypergraph regularity and quasi-randomness},
   conference={
      title={Proceedings of the Twentieth Annual ACM-SIAM Symposium on
      Discrete Algorithms},
   },
   book={
      publisher={SIAM, Philadelphia, PA},
   },
   date={2009},
   pages={227--235},
   review={\MR{2809322}},
}

\bib{RRS-a}{article}{
	author={Reiher, Chr.}, 
	author={R{\"o}dl, V.},
	author={Schacht, M.},
	title={On a Tur\'an problem in weakly quasirandom $3$-uniform hypergraphs}, 
	eprint={1602.02290},
	note={Submitted},
}

\bib{RRS-b}{article}{
   author={Reiher, Chr.},
   author={R{\"o}dl, V.},
   author={Schacht, M.},
   title={Embedding tetrahedra into quasirandom hypergraphs},
   journal={J. Combin. Theory Ser. B},
   volume={121},
   date={2016},
   pages={229--247},
   issn={0095-8956},
   review={\MR{3548293}},
   doi={10.1016/j.jctb.2016.06.008},
}

\bib{RRS-d}{article}{
	author={Reiher, Chr.}, 
	author={R{\"o}dl, V.},
	author={Schacht, M.},
	title={Some remarks on $\piee$}, 
	book={
		title={Connections in Discrete Mathematics: A Celebration of the Work of Ron Graham},
		publisher={Cambridge University Press},
		place={Cambridge},
		editor={Butler, S.},
		editor={Cooper, J.},
		editor={Hurlbert, G.},
	},
	eprint={1602.02299},
	note={To appear},
}

\bib{RRS-e}{article}{
	author={Reiher, Chr.}, 
	author={R{\"o}dl, V.},
	author={Schacht, M.},
	title={On a generalisation of Mantel's theorem to uniformly dense hypergraphs}, 
	journal={Int. Math. Res. Not. IMRN},
	note={To appear},
	doi={10.1093/imrn/rnx017},
}

\bib{Ro86}{article}{
   author={R{\"o}dl, Vojt{\v{e}}ch},
   title={On universality of graphs with uniformly distributed edges},
   journal={Discrete Math.},
   volume={59},
   date={1986},
   number={1-2},
   pages={125--134},
   issn={0012-365X},
   review={\MR{837962 (88b:05098)}},
   doi={10.1016/0012-365X(86)90076-2},
}

\bib{RoSchRL}{article}{
   author={R{\"o}dl, Vojt{\v{e}}ch},
   author={Schacht, Mathias},
   title={Regular partitions of hypergraphs: regularity lemmas},
   journal={Combin. Probab. Comput.},
   volume={16},
   date={2007},
   number={6},
   pages={833--885},
   issn={0963-5483},
   review={\MR{2351688 (2008h:05083)}},
}
		
\bib{RoSchCL}{article}{
   author={R{\"o}dl, Vojt{\v{e}}ch},
   author={Schacht, Mathias},
   title={Regular partitions of hypergraphs: counting lemmas},
   journal={Combin. Probab. Comput.},
   volume={16},
   date={2007},
   number={6},
   pages={887--901},
   issn={0963-5483},
   review={\MR{2351689 (2008j:05238)}},
}

\bib{Tu41}{article}{
   author={Tur{\'a}n, Paul},
   title={Eine Extremalaufgabe aus der Graphentheorie},
   language={Hungarian, with German summary},
   journal={Mat. Fiz. Lapok},
   volume={48},
   date={1941},
   pages={436--452},
   review={\MR{0018405 (8,284j)}},
}

\end{biblist}
\end{bibdiv}

\end{document}